\newcommand\Z{{\mathbb Z}}
\newcommand\Q{{\mathbb Q}}
\newcommand\C{{\mathbb C}}
\renewcommand\P{{\mathbb P}}
\newcommand\D{{\mathbb D}}
\newcommand\bG{{\mathbb G}}
\renewcommand\H{{\mathbb H}}
\newcommand\V{{\mathbb V}}
\newcommand\E{{\mathcal E}}
\newcommand\M{{\mathcal M}}
\newcommand\cO{{\mathcal O}}
\newcommand\U{{\mathcal U}}
\newcommand\cH{{\mathcal H}}
\newcommand\cL{{\mathcal L}}
\newcommand\cP{{\mathcal P}}
\newcommand\cV{{\mathcal V}}
\newcommand\cU{{\mathcal U}}
\DeclareMathAlphabet\mathbfcal{OMS}{cmsy}{b}{n}
\newcommand\g{{\mathfrak g}}
\newcommand\h{{\mathfrak h}}
\newcommand\p{{\mathfrak p}}
\newcommand\e{{\epsilon}}
\newcommand\w{{\omega}}
\newcommand\G{{\Gamma}}
\newcommand\zetabar{{\overline{\zeta}}}
\newcommand\vv{{\vec{\mathsf v}}}
\newcommand\ww{{\vec{\mathsf w}}}
\newcommand\bP{{\boldsymbol{\cP}}}
\renewcommand\aa{{\mathbf a}}
\newcommand\bb{{\mathbf b}}
\newcommand\ee{{\mathbf e}}
\newcommand\bt{{\mathbf t}}
\newcommand\bx{\mathbf{x}}
\newcommand\bX{\mathbf{X}}
\newcommand\bY{\mathbf{Y}}
\newcommand\bL{\mathbf{L}}
\renewcommand\v{\mathbf{v}}
\newcommand\bmu{{\boldsymbol{\mu}}}
\renewcommand\sl{\mathfrak{sl}}
\newcommand\SL{{\mathrm{SL}}}
\newcommand\Gm{{\mathbb{G}_m}}
\newcommand\DR{{\mathrm{DR}}}
\newcommand\Zag{\mathrm{Zag}}
\newcommand\PD{\mathrm{PD}}
\newcommand\LR{\mathrm{LR}}
\newcommand\sgn{\mathrm{sgn}}
\newcommand\lcm{\mathrm{lcm}}
\newcommand\can{\mathrm{can}}
\newcommand\bs{\backslash}
\newcommand\bbs{{\bs\negthickspace \bs}}
\renewcommand\ll{\langle\langle}
\newcommand\rr{\rangle\rangle}
\newcommand\KZB{\mathrm{KZB}}
\newcommand\KZ{\mathrm{KZ}}
\newcommand{\piun}{\pi_1^{\mathrm{un}}}
\newcommand\ad{\operatorname{ad}}
\newcommand\Ad{\operatorname{Ad}}
\newcommand\Spec{\operatorname{Spec}}
\newcommand\Hom{\operatorname{Hom}}
\newcommand\End{\operatorname{End}}
\newcommand\Aut{\operatorname{Aut}}
\newcommand\Der{\operatorname{Der}}
\newcommand\Gr{\operatorname{Gr}}
\newcommand\Res{\operatorname{Res}}
\newcommand\Lie{\operatorname{Lie}}
\newcommand\topo{\mathrm{top}}
\renewcommand\Im{\operatorname{Im}}
\newcommand{\mat}[1]{\begin{pmatrix} #1 \end{pmatrix}}
\numberwithin{equation}{section}
\newtheorem{theorem}{Theorem}[section]
\newtheorem{lemma}[theorem]{Lemma}
\newtheorem{prop}[theorem]{Proposition}
\newtheorem{cor}[theorem]{Corollary}
\newtheorem{bigtheorem}{Theorem}
\newtheorem{bigprop}[bigtheorem]{Proposition}
\theoremstyle{definition}
\newtheorem{definition}[theorem]{Definition}
\newtheorem{example}[theorem]{Example}
\theoremstyle{remark}
\newtheorem{remark}[theorem]{Remark}
\title[KZB in higher level]{The universal elliptic KZB connection in higher level}
\author{Eric Hopper}
\email{eric.hopper@rochester.edu}
\address{Department of Mathematics, University of Rochester, Rochester, NY, USA}
\date{\today}
\begin{document}

\maketitle

\begin{abstract}
    The level $N$ elliptic KZB connection is a flat connection over the universal elliptic curve in level $N$ with its $N$-torsion sections removed. Its fiber over the point $(E,x)$ is the unipotent completion of $\pi_1(E - E[N],x)$. It was constructed by Calaque and Gonzalez. In this paper, we show that the connection underlies an admissible variation of mixed Hodge structure and that it degenerates to the cyclotomic KZ connection over the singular fibers of the compactified universal elliptic curve. These are the first steps in a larger project to compute the action of the Galois group of mixed Tate motives unramified over $\Z[\bmu_N,1/N]$ on the unipotent fundamental group of $\P^1 - \{0,\bmu_N,\infty\}$ and to better understand Goncharov's higher cyclotomy. 
\end{abstract}

\section{Introduction}

In this paper, we give a concise exposition of the universal elliptic KZB connection\footnote{Named for physicists Knizhnik, Zamolodchikov, and Bernard \cite{KZe, bernard}.} in level $N \ge 1$, clarify some of its properties, and prove that it underlies an admissible variation of mixed Hodge structure. 

Suppose that $N$ is a positive integer. Denote the modular curve parametrizing elliptic curves with full level $N$ structure by $Y(N)$\label{not:YN} and the universal elliptic curve over it by $\E_N$.\label{not:EN} These will be regarded as complex analytic varieties (or orbifolds when $N < 3$). Set $\E_N' := \E_N - \E_N[N]$.\label{not:ENN} The fiber $E'$\label{not:Eprime} of the projection $\E_N' \to Y(N)$ over the moduli point of the elliptic curve $E$ is the elliptic curve $E$ itself with its $N$-torsion subgroup $E[N]$ removed.

To the point $(E',x)$ in $\E_N'$ we can associate the Lie algebra $\p(E',x)$ of the unipotent completion of $\pi_1(E',x)$. It is a free pronilpotent Lie algebra over $\Q$ on $N^2+1$ generators.

Set
\begin{equation}
    \label{eqn:pN}
    \p_N := \bL\left(\bX,\bY, \bt_\alpha \mid \alpha \in \E_N[N]\right)^\wedge \Big/ \Big(\sum_\alpha \bt_\alpha = [\bX,\bY]\Big).
\end{equation}
\label{not:XY}This is a free pronilpotent Lie algebra\label{not:pN} over $\C$ of rank $N^2+1$. The level $N$ universal elliptic KZB connection is a flat connection on a (pro-)holomorphic vector bundle over $\E_N'$ whose fibers are isomorphic to $\p_N$. Its restriction to the fiber $E_\tau'$ of $\E_N' \to Y(N)$, where $E_\tau = \C/(\Z \oplus \tau \Z)$, induces an isomorphism
\begin{equation}
    \label{eqn:fibercomp}
    \psi_\tau : \p(E_\tau',x)\otimes_\Q \C \to \p_N.
\end{equation}
When $N=1$, an explicit formula for this connection was derived independently by Calaque, Enriquez, and Etinghof \cite{CEE} and by Levin and Racinet \cite{LR}. Calaque and Gonzalez \cite{CG} then gave a formula for the connection for general $N \ge 1$.

The pullback of the level $N$ KZB connection to the universal covering $\h\times \C$ of $\E_N$ is a meromorphic connection on the trivial bundle
$$
\h\times \C \times \p_N \to \h \times \C
$$
of the form
$$
\nabla = d + \Omega_N,
$$
where $\h$\label{not:h} denotes the upper half plane and
$$
\Omega_N \in \Omega^1(\h \times \C, \log \Lambda_N) \hat{\otimes} \Der \p_N.
$$
Here $\Lambda_N$ denotes the preimage of $\E_N[N]$ in $\h\times \C$. The coefficients of the 1-form $\Omega_N$ are Eisenstein series and Jacobi forms of level $N$. The connection is holomorphic on $\E_N'$ and has logarithmic singularities along the $N$-torsion sections $\E_N[N]$ and along the singular fibers of the extension of $\E_N$ to its natural smooth compactification.

The topological significance of the KZB connection is described in the following theorem. Denote by $\bP^\topo_N$ the $\Q$-local system of pronilpotent Lie algebras over $\E_N'$ whose fiber over the moduli point of $(E,x)$ is the Lie algebra $\p(E',x)$.

\begin{bigtheorem}
\label{bigthm:bundles}
The KZB connection over $\E_N'$ is naturally isomorphic to the flat vector bundle $\bP^\topo_N\otimes_\Q \cO_{\E_N'}$.
\end{bigtheorem}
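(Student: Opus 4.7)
The plan is to reduce the statement to an equality of monodromy representations. Since the KZB connection is flat and $\bP_N^\topo \otimes_\Q \cO_{\E_N'}$ is the flat bundle canonically attached to the local system $\bP_N^\topo$, each of these flat bundles is determined by its local system of flat sections under Riemann--Hilbert. The theorem is therefore equivalent to constructing a natural isomorphism of $\C$-local systems on $\E_N'$ between the sheaf of KZB-flat sections and $\bP_N^\topo \otimes_\Q \C$. At a point $(E_\tau, x) \in \E_N'$ the fiberwise candidate identification is $\psi_\tau^{-1}$, so what must be shown is that these fiberwise isomorphisms are compatible with monodromy: the two representations of $\pi_1(\E_N', (E_\tau,x))$ on $\p_N$ (via the KZB connection) and on $\p(E_\tau',x) \otimes_\Q \C$ (via the topological local system) must agree under $\psi_\tau$.

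I would verify this compatibility using the homotopy exact sequence of the fibration $\E_N' \to Y(N)$,
$$
1 \to \pi_1(E_\tau', x) \to \pi_1(\E_N', (E_\tau,x)) \to \pi_1(Y(N), \tau) \to 1,
$$
splitting the check into a vertical and a horizontal case. The vertical case is essentially built into the definition of $\psi_\tau$: for a loop $\gamma \in \pi_1(E_\tau',x)$, the topological monodromy on $\p(E_\tau',x)$ is inner conjugation by $\gamma$, while the KZB monodromy on $\p_N$ is inner conjugation by the image $\psi_\tau(\gamma) \in \p_N$ of $\gamma$ under the unipotent realization of $\pi_1(E_\tau',x)$ furnished by iterated integrals of the restricted connection on $E_\tau'$ (Chen's theorem). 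These agree tautologically under $\psi_\tau$.

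The harder case is the horizontal one: for a loop $\delta \in \pi_1(Y(N),\tau)$ and a chosen lift to a path in $\E_N'$, I must check that parallel transport under the KZB connection induces the same automorphism of $\p(E_\tau',x) \otimes_\Q \C$ as the topological monodromy of $\bP_N^\topo$. By the vertical compatibility, it suffices to check this for a single convenient lift of each such $\delta$, since two lifts differ by a vertical loop. My strategy is to work at a tangential basepoint at a cusp of $Y(N)$: there the KZB connection degenerates to the cyclotomic KZ connection (as asserted in the abstract), both sides admit explicit descriptions, and the Dehn-twist generators of the local fundamental group near the cusp can be matched directly. Flatness of the KZB connection together with the already-established vertical compatibility then propagates the identification from the tangential basepoint to the rest of $\E_N'$. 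The main obstacle is precisely this horizontal monodromy match, which depends nontrivially on the asymptotic degeneration of $\Omega_N$ at the cusps and must be carried out without circularity.
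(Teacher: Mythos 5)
Your setup (reduction to an equality of monodromy representations after identifying fibers via $\psi_\tau$) and your vertical check are exactly what the paper does: for $\beta \in \pi_1(E_\tau',x)$ the formula \eqref{eqn:fibermono} shows the KZB monodromy is $\Ad(\varphi(\beta))$, which matches conjugation under $\psi_\tau$. The divergence, and the gap, is in how you handle the horizontal direction. You treat the horizontal monodromy match as a computation still to be done, and the method you sketch would not close it: matching the Dehn-twist generators of the \emph{local} fundamental group at one cusp only constrains the representation on the subgroup those loops generate, and for $N$ with $g(X(N)) > 0$ the loops around cusps do not generate $\pi_1(Y(N))$ (which is free of rank $2g + c - 1$), so no amount of ``propagation by flatness'' from a single tangential basepoint recovers the full representation. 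You would be forced into a global computation of the KZB monodromy over the base, which is precisely the hard analytic work the construction is designed to avoid.

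The idea you are missing is that the horizontal check is unnecessary. The paper invokes a rigidity lemma of Hain: if $N$ is a normal subgroup of a discrete group $\G$ and the unipotent completion $\mathcal{N}$ of $N$ has trivial center, then there is a \emph{unique} homomorphism $\G \to \Aut\mathcal{N}$ whose restriction to $N$ is $n \mapsto \iota_{\theta(n)}$. Taking $\G = \pi_1(\E_N',[E_\tau,x])$ and $N = \pi_1(E_\tau',x)$, whose unipotent completion is free on $N^2+1$ generators and hence centerless, both $\rho^\topo$ and $\Ad\psi_\tau^{-1}\circ\rho^{\KZB}$ restrict to inner conjugation on the fiber subgroup — the former by definition of the conjugation action, the latter by your (and the paper's) vertical computation — so they coincide on all of $\G$. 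This converts the entire horizontal problem into the triviality of the center of a free pronilpotent Lie algebra. I recommend you restructure your proof around this lemma; your vertical computation then already completes the argument.
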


So one can think of the KZB connection as the de~Rham analogue of the Betti local system $\bP^\topo_N$ and this isomorphism as a Betti to de~Rham comparison isomorphism. This point of view is implicit in previous work especially Hain's exposition of the $N = 1$ case \cite{hain:kzb}.

The Lie algebra $\p_N$ has natural Hodge and weight filtrations defined by giving $\bX$ and $\bY$ weight $-1$, each $\bt_\alpha$ weight $-2$, and placing $\bX$ in $F^0\p_N$ and $\bY$ and each $\bt_\alpha$ in $F^{-1}\p_N$. The isomorphism (\ref{eqn:fibercomp}) gives $\p_N$ a $\Q$-structure, and thus a mixed Hodge structure (MHS). This induces a MHS on $\p(E',x)$ by pulling back the Hodge and weight filtrations along \eqref{eqn:fibercomp}. We establish that this MHS is in fact the canonical one constructed in \cite{hain:dht}.

\begin{bigprop}
\label{bigprop:fiber}
For every elliptic curve $E_\tau$, the MHS on $\p(E_\tau',x)$ induced by the isomorphism $\psi_\tau$ in (\ref{eqn:fibercomp}) is the same as the canonical MHS on $\p(E_\tau',x)$. 
\end{bigprop}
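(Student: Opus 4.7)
My plan is to reduce the proposition to a comparison of mixed Hodge structures on $H_1$, and then verify that comparison by analyzing the KZB connection. Since $E_\tau'$ is a smooth affine curve, $\pi_1(E_\tau', x)$ is free and $\p(E_\tau', x)$ is a free pronilpotent Lie algebra on $H_1(E_\tau', \Q)$. In this setting, the canonical MHS of \cite{hain:dht} on $\p(E_\tau', x)$ coincides with the free pronilpotent MHS extending that on $H_1$: both filtrations extend to $\p$ by requiring additivity under Lie brackets, and no further cohomological obstructions arise because the curve is affine. The MHS on $\p_N$ coming from the prescribed Hodge and weight data on $\bX$, $\bY$, and $\bt_\alpha$ admits the same description with respect to its own $H_1$. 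Consequently, it suffices to show that $\psi_\tau$ induces an isomorphism of MHS on $H_1$.

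For the weight filtration, $W_{-2} H_1(E_\tau', \Q)$ is spanned by classes of loops around the $N$-torsion punctures and $\Gr^W_{-1} \cong H_1(E_\tau)$. Because each $\bt_\alpha$ arises, in the construction of \cite{CG}, as the residue of the KZB connection at the section $\alpha \in \E_N[N]$, the class $\psi_\tau^{-1}(\bt_\alpha)$ is that of a small loop around $\alpha$ and hence lies in $W_{-2}$; meanwhile $\bX$ and $\bY$ project to a basis of $H_1(E_\tau, \C)$. This matches the canonical weight filtration. The Hodge types of the $\bt_\alpha$ are then automatically $(-1,-1)$, since $\Gr^W_{-2} H_1(E_\tau', \C)$ is pure of that type, consistent with $\bt_\alpha \in F^{-1} \setminus F^0$.

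The substantive remaining step is to show that $\psi_\tau(\bX) \in F^0 H_1(E_\tau', \C)$. By strictness of the Hodge filtration, this subspace surjects isomorphically onto the one-dimensional $F^0 H_1(E_\tau, \C)$, which is the unique complex line of cycles annihilating the holomorphic form $dz$ under the period pairing. To verify the inclusion I would expand $\Omega_N$ on $\h \times \C$ near a regular point of the relative coordinate and compute the leading terms of the monodromies along the cycles $u \mapsto u+1$ and $u \mapsto u+\tau$; reading off the resulting images of $\bX, \bY$ modulo $W_{-2}$ should identify $\psi_\tau^{-1}(\bX)$ with a specific $\tau$-dependent combination of real cycles whose period against $dz$ vanishes. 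For $N = 1$ this is essentially Hain's calculation in \cite{hain:kzb}, and for $N \geq 2$ the extra $\bt_\alpha$ contribute only to $W_{-2}$, leaving the $\bX, \bY$ analysis unchanged at leading order. The main obstacle is this explicit identification of $\psi_\tau(\bX)$ in $F^0$, which requires tracking the Eisenstein-series coefficients of $\Omega_N$ and their interaction with the Gauss--Manin connection on $R^1\pi_*\C$; once in hand, the proposition follows immediately from the reduction to $H_1$.
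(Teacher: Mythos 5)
Your reduction to $H_1$ is where the argument breaks down. It is true (and essentially already in the paper's Proposition \ref{prop:fiber}) that $\psi_\tau$ matches the weight filtrations on $H_1$, with the $\bt_\alpha$ going to loops about the punctures and $\bX=\tau\aa-\bb$ spanning $F^0H^{\DR}_1(E_\tau)$; but the canonical MHS on $\p(E_\tau',x)$ is \emph{not} determined by the MHS on $H_1$ together with bracket-additivity. The weight filtration is generated that way, but the Hodge filtration is a specific filtration of $\p(E_\tau',x)\otimes\C$ whose position relative to the $\Q$-structure encodes the nontrivial periods of the fundamental group (elliptic polylogarithms; in the degenerate case, multiple zeta values) and in particular depends on the base point $x$, whereas $H_1$ does not. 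Concretely, the automorphism of $\p_1=\bL(\bX,\bY)^\wedge$ fixing $\bY$ and sending $\bX\mapsto\bX+[\bX,[\bX,\bY]]$ preserves $W_\bullet$ and induces the identity on $H_1$, yet moves $F^0=\C\bX$ off itself. So even after completing the monodromy computation you outline, you would only have identified $\psi_\tau^{-1}(\bX)$ modulo $[\p,\p]$; this does not show $\psi_\tau^{-1}(F^p\p_N)=F^p\p(E_\tau',x)^{\can}$ in any bracket degree beyond the first, which is exactly the content of the proposition.

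The paper's proof is of a completely different nature and does not attempt a direct comparison of filtrations. It first establishes that $\mathbfcal{P}_N^\topo$ with these filtrations is an admissible unipotent variation of MHS over $\E_N'$ (Theorem \ref{thm:avmhs}); the Hain--Zucker theorem then says the monodromy/adjoint representation $\theta:\p(E',x)^{\can}\to\Der\p(E',x)^{\KZB}$ is a morphism of MHS. Since $\theta$ is also a morphism of MHS when the source carries the KZB structure, and since $\theta$ is injective (a free Lie algebra has trivial center), strictness of MHS morphisms forces the two structures to coincide. To salvage a hands-on argument you would have to compare the filtrations in \emph{all} bracket degrees, e.g.\ by matching the KZB parallel transport against the iterated-integral construction of the canonical Hodge filtration — a substantially larger task than the $H_1$ check, and one your proposal does not address.
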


More generally, the KZB connection gives an explicit construction of the corresponding variation of MHS over $\E_N'$.
\begin{bigtheorem}
\label{bigthm:avmhs}
With the $\Q$-structure from Theorem 1 and the Hodge and weight filtrations above, the local system $\mathbfcal{P}_N^\topo \to \E_N'$ is a pro-object of the category of admissible variations of mixed Hodge structure over $\E_N'$.
\end{bigtheorem}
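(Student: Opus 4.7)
The plan is to build the VMHS using Theorem 1 and Proposition 2, then verify admissibility at the boundary of a smooth compactification of $\E_N'$. Theorem 1 supplies the $\Q$-structure on the KZB bundle via the identification with $\mathbfcal{P}_N^\topo\otimes_\Q\cO_{\E_N'}$, while Proposition 2 ensures that each fiber is a pro-MHS (in fact the canonical one on $\p(E_\tau',x)$). It remains to check that the pointwise filtrations fit together as a VMHS and that the resulting object is admissible.

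For the VMHS structure, the weight filtration $W_\bdot$ is, up to reindexing, the lower central series of $\p_N$; this is intrinsic to the Lie algebra and defined over $\Q$, so it descends to a filtration of $\mathbfcal{P}_N^\topo$ by sub-local systems. The Hodge filtration comes from the explicit formula $\nabla = d + \Omega_N$: the generators $\bX$, $\bY$, $\bt_\alpha$ are realized as holomorphic sections of the KZB bundle on $\h\times\C$, and $F^\bdot$ is determined on them by the given Hodge assignments and extended by the bracket compatibility $F^p\cdot F^q \subset F^{p+q}$. This gives a filtration by holomorphic sub-bundles. Griffiths transversality then reduces to the claim that every derivation appearing in $\Omega_N$ sends $F^p$ into $F^{p-1}$; since the accompanying derivations all lie in $F^{-1}\Der \p_N$, this is a direct inspection of the explicit formula.

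Admissibility is the main obstacle. I would choose a smooth compactification $\bar{\E}_N$ of $\E_N$ whose boundary $\bar{\E}_N-\E_N'$ is a normal crossings divisor with two types of components: the $N$-torsion sections, and the singular fibers lying over the cusps of the compactification of $Y(N)$. By construction, $\Omega_N$ has logarithmic poles along each such component, and the residues are nilpotent derivations of $\p_N$ — essentially $\ad \bt_\alpha$ at the torsion sections, and derivations of strictly negative weight at the singular fibers. Nilpotence of residues gives unipotent local monodromy; the Deligne extension then exists, and $F^\bdot$ extends to a filtration by sub-bundles since the residues respect $F^\bdot$. The most delicate remaining condition is the existence of a relative monodromy weight filtration compatible with $W_\bdot$ at each boundary stratum. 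I would verify it using the degeneration of the level-$N$ KZB connection to the cyclotomic KZ connection over the singular fibers (the other main result of the paper, promised in the abstract): this identifies the limiting data with that of a known admissible variation, from which the required compatibility can be extracted, extending Hain's analysis \cite{hain:kzb} of the $N=1$ case. I expect the verification of the relative weight filtration at intersections of boundary components to be the technical heart of the argument.
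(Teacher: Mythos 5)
Your high-level plan is in the right spirit, but it has several concrete errors and a substantial unfilled gap, and at least one step is circular relative to the paper's logic.

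First, the weight filtration on $\p_N$ is \emph{not} the lower central series. The paper's filtration gives $\bX$ and $\bY$ weight $-1$ but each $\bt_\alpha$ weight $-2$ (explicitly, $W_{-m}$ is cut out by $\deg_\bY + \deg_\bX + 2\sum\deg_{\bt_\alpha} \geq m$). The lower central series would put all generators in the same degree. The distinction matters: the $\bt_\alpha$'s correspond to loops around punctures and carry a $\Q(1)$, which is why $\Gr^W_\bullet \bP_N^\topo$ decomposes into Tate twists of symmetric powers of $\H$. The claim that $W_\bullet$ descends to a $\Q$-local system filtration is correct, but your justification is not; the paper proves it by checking the factors of automorphy lie in $F_0W_0M_0\End\p_N$.

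Second, and more seriously, the residue of $\nabla_{\KZB_N}$ along a singular fiber is \emph{not} of ``strictly negative weight.'' The paper computes $L_{q_N,P} = N\bigl(\bY\partial/\partial\bX + \tfrac12\sum_{\alpha,m}(\cdots)\e_{m+2,\alpha\gamma^{-1}}\bigr)$, and the leading term $\bY\partial/\partial\bX$ has $W$-weight $0$. That is precisely why a genuinely new relative weight filtration $M_\bullet$ is needed at the cusps: if $L_{q_N,P}$ had strictly negative $W$-weight, you would have $M = W$ there and nothing to do. Your proposal does not construct $M_\bullet$; you only gesture at verifying its existence ``using the degeneration to the cyclotomic KZ connection.'' The paper instead writes $M_\bullet$ down explicitly ($M_{-m}$ cut out by $2\deg_\bY + 2\sum\deg_{\bt_\alpha} \geq m$), observes $\Gr^W_0 L_{q,P} = N\bY\partial/\partial\bX$, and uses $\sl_2$-representation theory on each $\Gr^W_{-m}\p_N \cong \bigoplus S^{m-2k}\H(k)$ to verify the isomorphism condition \eqref{eqn:Lr}. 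Reducing to the KZ degeneration alone would not supply this: the KZ connection lives on the one-dimensional $\Gm - \bmu_N$ and does not directly control the $q_N$-direction where the interesting relative weight filtration lives. It is the explicit residue $L_{q_N,P}$ together with the $\sl_2$ structure that does the work, and the check at an intersection of boundary components is then easy because $L_{w,\tilde\alpha} \in W_{-2}\Der\p_N$ does not change $\Gr^W$.

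Third, invoking Proposition~\ref{bigprop:fiber} to ``ensure each fiber is a pro-MHS'' is circular: in the paper, Proposition~\ref{bigprop:fiber} is deduced \emph{from} Theorem~\ref{bigthm:avmhs} (using admissibility plus the rigidity theorem of Hain--Zucker). What you actually need, and what the filtrations' compatibility with the factors of automorphy and Griffiths transversality deliver, is that the fiberwise filtrations define a MHS at one point, from which the VMHS structure propagates. The identification with the \emph{canonical} MHS on $\p(E_\tau',x)$ is a consequence, not an input.
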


These results are elliptic analogues of well-known facts in genus 0. The level $N$ elliptic KZB connection generalizes the cyclotomic KZ connection on the trivial $\bL(\ee_0,\ee_\zeta \mid \zeta \in \bmu_N)^\wedge$-bundle over $\Gm - \bmu_N$.\label{not:Gm}\label{not:bmu} The cyclotomic KZ connection, introduced in \cite{enr:kz}, is defined by
\begin{equation}
    \label{eqn:KZ}
    \nabla_\KZ = d - \ee_0 \frac{dw}{w} - \sum_{\zeta \in \bmu_N}\ee_\zeta\frac{dw}{w-\zeta},
\end{equation}
where each $\ee_r$ acts on $\bL(\ee_0,\ee_\zeta \mid \zeta\in \bmu_N)^\wedge$ by the adjoint action. The monodromy action induces an isomorphism of MHS
\begin{equation}
    \label{eqn:kzcomp}
    \p(\Gm - \bmu_N,x) \otimes_\Q \C \to \bL(\ee_0,\ee_\zeta \mid \zeta \in \bmu_N)^\wedge,
\end{equation}
where each $\ee_r$ is of type $(-1,-1)$. This is the genus 0 analogue of \eqref{eqn:fibercomp}.

By viewing $\Gm - \bmu_N$ as a degenerate elliptic curve with its $N$-torsion removed, we make the following observation. 

\begin{bigtheorem}
\label{bigthm:KZ}
Along each component of the singular fibers of $\overline{\E}_N$, the level $N$ KZB connection degenerates to the cyclotomic KZ connection \eqref{eqn:KZ} and, on the identity component, the isomorphism $\psi_\tau$ in \eqref{eqn:fibercomp} degenerates to \eqref{eqn:kzcomp}.
\end{bigtheorem}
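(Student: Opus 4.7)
The plan is to analyze the degeneration by pulling back to the universal cover $\h \times \C$ of $\E_N$ and then take the $q \to 0$ limit of the explicit Calaque--Gonzalez connection form $\Omega_N$, where $q = e^{2\pi i \tau}$. Since $\overline{\E}_N$ near the cusp at infinity looks (after base change and blowup) like a neighborhood of the Tate curve $\Gm/q^\Z$, it is convenient to introduce the coordinate $w = e^{2\pi i \xi}$ on the fibers, so that the identity component of the singular fiber is $\Gm$ with coordinate $w$, and the $N$-torsion sections $\xi = (a + b\tau)/N$ become the points $w = \zeta_N^a q^{b/N}$. As $q \to 0$, the sections with $b=0$ limit to $\bmu_N \subset \Gm$, while those with $b \ne 0$ leave the identity component. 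Thus on the identity component of the singular fiber we recover exactly $\Gm - \bmu_N$, the base of the cyclotomic KZ connection \eqref{eqn:KZ}.

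Next I would compute the $q$-expansions of the coefficients of $\Omega_N$, which are Kronecker--Eisenstein series and Jacobi forms of level $N$, and collect their $q = 0$ terms. The key identity is that the Kronecker series $F(\xi,t;\tau)$ built from Jacobi theta functions degenerates at $q=0$ to $\pi \cot \pi\xi + \pi \cot \pi t$; rewriting this in the multiplicative coordinate $w = e^{2\pi i \xi}$ gives a rational form whose polar part along the divisor $\{w = \zeta\}$ is $dw/(w - \zeta)$, and along $\{w = 0\}$ is $dw/w$. A term-by-term analysis of the other summands of $\Omega_N$ (the diagonal $\bX, \bY$-part and the Eisenstein coefficients multiplying higher brackets) shows that in the $q \to 0$ limit all contributions either vanish or collapse to multiples of $dw/w$ and $dw/(w-\zeta)$. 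Identifying the residue operators $\bt_\alpha$ for $\alpha = a/N \in \E_N[N]$ with the KZ generators $\ee_{\zeta_N^a}$, and matching the residue along the node with $\ee_0$, yields exactly the form \eqref{eqn:KZ}.

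For the second assertion, I would track the comparison isomorphism $\psi_\tau$ through the limit. The generators $\bX, \bY, \bt_\alpha$ of $\p_N$ correspond under $\psi_\tau$ to the monodromy around specific topological loops on $E_\tau'$: $\bX$ around a horizontal loop that becomes the loop around the node (i.e.\ around $0$ in $\Gm$) in the limit, $\bY$ vanishes off the identity component, and $\bt_\alpha$ for $\alpha \in \E_N[N]$ on the identity component becomes the monodromy around the corresponding root of unity. Via the relation $\sum_\alpha \bt_\alpha = [\bX,\bY]$ modulo the generators that leave the identity component, one recovers the genus zero presentation $\bL(\ee_0, \ee_\zeta \mid \zeta \in \bmu_N)^\wedge$. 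The topological comparison \eqref{eqn:kzcomp} is then the limit of \eqref{eqn:fibercomp}, because both are determined by the same iterated-integral/monodromy prescription and the connection forms limit to one another by the previous paragraph.

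For the remaining components of the Néron $N$-gon: the group scheme of components of $\overline{\E}_N$ near this cusp is $\Z/N\Z$, and translation by an $N$-torsion section carries the identity component to any other component while conjugating $\Omega_N$ by an inner automorphism of $\p_N$ that permutes the $\bt_\alpha$. The same limit argument then identifies the connection on each component with the cyclotomic KZ connection (up to relabeling of the generators). The main obstacle is the explicit book-keeping of the $q$-expansions of the level $N$ Jacobi forms appearing in $\Omega_N$ and verifying that all corrections of positive $q$-order die in the limit in a uniform way along the divisor; this is delicate because some of the series are only conditionally convergent and must be summed in the Eisenstein ordering to give the right constant terms. Once that normalization is pinned down, the identification with \eqref{eqn:KZ} is a direct calculation.
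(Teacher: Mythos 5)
Your overall strategy is the same as the paper's: pull back to $\h\times\C$, pass to the multiplicative coordinate $w=e^{2\pi i z}$, set $q_N=0$ in the Fourier expansion \eqref{eqn:Ffourier} of $F$, observe that the torsion sections with $y_\alpha\notin\Z$ leave the identity component while those with $y_\alpha\in\Z$ limit to $\bmu_N$, and handle the other components by translating by a torsion section. That is exactly the computation carried out in \S\ref{sec:res} and \S\ref{sec:hain}, and the worry about conditional convergence is handled there simply by using Zagier's expansion, which already encodes the Eisenstein summation.

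However, your third paragraph misidentifies the generators, and this is precisely where the content of the theorem sits. The loop around $0$ in $\Gm$ (the vanishing cycle) is $\aa=2\pi i\bY$, not $\bX$: the class $\bX=\tau\aa-\bb$ is a de~Rham frame that corresponds to no loop on the degenerate fiber (it survives as the weight-$0$ generator of the limit MHS and pairs with the cycle through the node), and $\bY$ certainly does not ``vanish off the identity component.'' Consequently $\ee_0$ is \emph{not} matched with $\bX$; the residue of the limiting form at $w=0$ is $\frac{\bX}{e^\bX-1}\cdot\bY=\bY+\bigl(\frac{1}{e^\bX-1}-\frac{1}{\bX}\bigr)\cdot\sum_\zeta\bt_\zeta$ in the $\G_1(N)$ normalization, and in full level $N$ it is $\sum_\alpha\frac{e^{-y_\alpha\bX}}{e^\bX-1}\cdot\bt_\alpha+\sum_{y_\alpha\notin\Z}e^{-y_\alpha\bX}\cdot\bt_\alpha$; this nontrivial change of variables (the level $N$ generalization of Hain's map) is the actual statement of Theorem \ref{thm:KZ}. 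If you carry out the residue computation you describe in paragraph two, you will land on these formulas and the topological gloss will correct itself, but as written the identification would produce the wrong comparison map. A related imprecision: the passage to $\bL(\ee_0,\ee_\zeta\mid\zeta\in\bmu_N)^\wedge$ is not a quotient of $\p_N$ by the generators that leave the identity component — the fiber of the degenerate bundle is still all of $\p_N$ — but rather the identification of the free rank-$(N+1)$ subalgebra generated by the images of $\ee_0$ and the $\ee_\zeta$, onto which the monodromy of the restricted connection maps $\p(\Gm-\bmu_N,x)\otimes\C$ isomorphically.
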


This degeneration is well-known, especially in the $N = 1$ case. Taking a particular quotient yields the elliptic polylogarithm variation of Beilinson and Levin \cite{BL}, which at the cusp of $\overline{\M_{1,1}}$ degenerates to the classical polylogarithm variation. Notable areas of application include special values of the Riemann zeta function \cite{HK} and mixed Tate motives \cite[\S4]{hain:mem}. 

The degeneration in Theorem \ref{bigthm:KZ} gives an explicit formula for the change of variables $\bL(\ee_0,\ee_\zeta\mid \zeta \in \bmu_N)^\wedge \to \p_N$ induced by the inclusion of $\Gm - \bmu_N$ into a singular fiber of $\overline{\E}_N$. This is a level $N$ generalization of Hain's map in \cite[\S18]{hain:kzb}. The $\G_1(N)$ case has an especially nice formula. 

\begin{bigtheorem}
Let $q = e^{2\pi i \tau}$. The inclusion $\Gm -\bmu_N$ into $E_{\partial/\partial q}$, the first-order smoothing of the singular fiber of $\overline{\E}_{\G_1(N)}$ in the direction of $\partial/\partial q$, induces the map on completed Lie algebras $\bL(\ee_0,\ee_\zeta\mid \zeta \in \bmu_N)^\wedge \to \p_N$ given by 
\begin{equation*}
    \left\{ 
    \begin{array}{lll}
        \ee_0 & \longmapsto & \frac{\bX}{e^{\bX} - 1} \cdot \bY \\
        \ee_\zeta & \longmapsto & \bt_\zeta.
    \end{array}
    \right.
\end{equation*}
\end{bigtheorem}

A somewhat more complicated formula is required in the full level $N$ case. It is given in \S\ref{sec:res}.

The construction of the elliptic KZB connection presents two challenges not present in genus 0. The first is that, unlike in genus 0, elliptic curves have nontrivial moduli. The second is that Deligne's canonical extension of $\bP_N^\topo$ to $\overline{\E}_N$ is nontrivial as a holomorphic vector bundle, unlike in the KZ connection.

Our interest in the level $N$ KZB connection is motivated by potential applications to understanding the action of the motivic Galois group on cyclotomic multizeta values and, more generally, to Goncharov's higher cyclotomy \cite{gonch:sym}. In particular, we are developing KZB as a tool for understanding the motivic Galois action on the unipotent fundamental group of $\P^1 - \{0,\bmu_N,\infty\}$. This is established in depth 1 in the author's Ph.D. thesis \cite{thesis}. It will also be the subject of a forthcoming paper. 

By GAGA, Deligne's canonical extension of the elliptic KZB local system to $\overline{\E}_N$ is algebraic. Moreover, the level $N$ KZB connection is defined over $\Q(\bmu_N)$. This fact follows from a more general result of Chiarellotto, Proietto, and Shiho \cite{CPS}, which builds on the work of Lazda \cite{lazda}. Additionally, Hain \cite[\S4]{hain:kzb} and Luo \cite{rome} give concrete algebraic formulas in the $N = 1$ case. These facts regarding the algebraic nature of KZB are important in the study of (mixed) elliptic motives \cite{hain:mem}.

In this paper, sections 3 through 5 review modular curves, Eisenstein series, and unipotent completion. Sections 6 and 7 contain an explicit description of the KZB connection in level $N$ adapted from \cite{CG}. Sections 8 and 9 include proofs of \eqref{eqn:fibercomp} and Theorem \ref{bigthm:bundles}, respectively. In section 10, we compute the restriction of the KZB connection to first order neighborhoods of the boundary divisor of $\E_N'$ in $\overline{\E}_N$. Section 11 includes proofs of Proposition \ref{bigprop:fiber} and Theorem \ref{bigthm:avmhs}, our main result. Section 12 describes the KZB connection over other modular curves such as $Y_1(N)$. In the appendix, we prove the KZB connection is invariant under the action of $\SL_2(\Z) \ltimes \Z^2$. This argument is superficially different than those of \cite{CEE,gonza,CG} and inspired by \cite{LR}. \\\\

\noindent {\em Acknowledgements:} These results appeared in my Ph.D. thesis completed under the supervision of Richard Hain at Duke University. I am grateful for his introducing the KZB connection to me, guidance in approaching these problems, and many suggestions in the writing of this manuscript. 

\section{Notation and conventions}

We work in the category of complex analytic varieties unless otherwise noted.

Throughout, we denote by $\gamma$ the matrix $\begin{pmatrix} a & b \cr c & d \end{pmatrix} \in \SL_2(\Z)$, and $a$, $b$, $c$, and $d$ will refer to its entries. 

We will use the topologist's convention for composition of paths. If $X$ is a topological space, $\alpha,\beta : [0,1] \to X$, and $\alpha(1) = \beta(0)$, then $\alpha\beta$ denotes the path by first proceeding along $\alpha$ and then along $\beta$.

The one-dimensional pure $\Q$-Hodge structure of type $(-n,-n)$ will be denoted by $\Q(n)$. Its $\Q$-Betti and $\Q$-de~Rham generators are $\Q\ee^B$ and $\Q\ee^\DR$, respectively. The Betti to de~Rham comparison isomorphism takes $\ee^B$ to $(2\pi i)^n \ee^\DR$.

Suppose that $F$ is a field of characteristic 0 and that $V$ is a finite dimensional vector space over $F$. Denote by $\bL(V)$ the free Lie algebra on $V$. Recall that the universal enveloping algebra of $\bL(V)$ is the tensor algebra $T(V)$. Let $\bL(V)^\wedge$ be the completion of $\bL(V)$ with respect to its lower central series. Let $T(V)^\wedge$ be the completion of $T(V)$ with respect to powers of the augmentation ideal $I = \ker \psi$ where $\psi : T(V) \to F$ and $\psi(v) = 0$ for all $v \in V$.

Denote the free associative $F$-algebra generated by the finite set $\{\ee_j \mid j \in J\}$ by 
$$
F\langle\ee_j \mid j \in J\rangle.
$$
Let $I$ be the ideal generated by $\{\ee_j \mid j \in J\}$. The $I$-adic completion of $F\langle \ee_j \mid j \in J\rangle$ is the non-commutative power series ring
$$
F\ll \ee_j \mid j \in J\rr. 
$$
If $\{\ee_j \mid j \in J\}$ is a basis of $V$, then there are canonical isomorphisms
$$
F\langle \ee_j : j\in J\rangle \cong T(V) \quad \text{and} \quad F\ll \ee_j : j\in J\rr \cong T(V)^\wedge.
$$
We have similar notation for Lie algebras. The free Lie algebra over $F$ generated by $\{\ee_j \mid j \in J\}$ is denoted by 
$$
\bL(\ee_j \mid j \in J).
$$
The completion with respect to the lower central series is denoted by
$$
\bL(\ee_j \mid j \in J)^\wedge. 
$$
If $\{\ee_j \mid j \in J\}$ is a basis of $V$, then there are canonical isomorphisms 
$$
\bL(\ee_j \mid j \in J) \cong \bL(V) \quad \text{and} \quad \bL(\ee_j \mid j \in J)^\wedge \cong \bL(V)^\wedge. 
$$

The adjoint action of an element $x \in T(V)$ on $y \in \bL(V)$ will be denoted $x \cdot y$. This notation also applies to completions. If $a_n \in F$ and $x\in V$, then 
$$
\left(\sum_{n = 0}^\infty a_nx^n\right)\cdot y := \sum_{n = 0}^\infty a_n \ad_x^n(y).
$$
Also, when it is clear we are working in the derivation algebra $\Der \g$ of a Lie algebra $\g$ with trivial center, such as a free Lie algebra of rank $>1$, we will view $\g$ as a subalgebra of $\Der \g$ via the adjoint action $\ad : \g \to \Der \g$.

\section{Modular curves and universal elliptic curves}

\subsection{Framed elliptic curves}
\label{sec:framed}

A {\em framed elliptic curve} is an elliptic curve $(E,P)$ together with a basis $\{\aa,\bb\}$ of $H_1(E,\Z)$ such that the intersection number $\aa \cdot \bb$ is 1. The moduli space of framed elliptic curves is the upper half plane $\h$. The point $\tau \in \h$ corresponds to the elliptic curve $(E_\tau,0) := (\C/\Lambda_\tau,0)$ where $\Lambda_\tau := \Z \oplus \tau \Z$ with framing $\aa$ and $\bb$ determined by the images of 1 and $\tau$, respectively, under the natural isomorphism $\Lambda_\tau \to H_1(E_\tau,\Z)$. Conversely, if $(X,P)$ is an elliptic curve and $\{\aa,\bb\}$ is a symplectic framing of $H_1(X,\Z)$, then $\tau = \int_\bb \omega/\int_\aa \w$, where $\w$ is any nonzero abelian differential on $X$. The map $f: (X,P) \to (E_\tau,0)$ given by $f : x \mapsto \int_P^x \omega/\int_\aa \omega$ is an isomorphism.

The group $\SL_2(\Z)$ acts on framings on the left via
\begin{equation}
    \label{eqn:framing}
    \gamma : \begin{pmatrix} \bb \cr \aa \end{pmatrix} \longmapsto \begin{pmatrix} a & b \cr c & d \end{pmatrix} \begin{pmatrix} \bb \cr \aa \end{pmatrix}.
\end{equation}
The corresponding action on the upper half plane is
\begin{equation}
    \label{eqn:modular}
    \gamma : \tau \longmapsto \frac{a\tau + b}{c\tau + d}.
\end{equation}
The orbifold quotient $\SL_2(\Z) \bbs \h$ is the moduli space of elliptic curves $\M_{1,1}$.

\subsection{The local system $\H$}
\label{sec:H}

Denote by $\H$\label{not:H} the local system over $\M_{1,1}$ whose fiber over the moduli point of an elliptic curve $E$ is $H_1(E,\Q)$. We can pull back $\H$ to $\h$ and $\D^\ast$ along
$$
\xymatrix{\H_\h \ar[r] \ar[d] & \H_{\D^\ast} \ar[r] \ar[d] & \H \ar[d] \cr
\h \ar[r]_q & \D^\ast \ar[r] & Y(N),}
$$
where $q(\tau) = e^{2\pi i \tau}$. The local system $\H_\h$ has a flat framing $\aa$ and $\bb$ as defined in \S\ref{sec:framed}. Define the flat vector bundle $\cH := \H \otimes \cO_{\M_{1,1}}$.\label{not:cH} The pullback $\cH_\h$ of $\cH$ to $\h$ has framings $\aa$, $\bb$,\label{not:ab} and also $\bX : \tau \mapsto \omega_\tau^\PD$, where $\omega_\tau \in H^0(E_\tau, \Omega^{1,0})$ is the Poincar\'e dual of the unique holomorphic differential taking the value 1 on $\aa$. It is straightforward to check $\omega_\tau = \tau\aa^\PD - \bb^\PD$, and hence, $\bX = \tau \aa - \bb$. The bundle $\cH_\h$ has natural connection $\nabla$ such that $\nabla \aa = \nabla \bb = 0$. Hence, $\nabla \bX = \aa \, d \tau$ and $\nabla = \aa \partial/\partial\bX \, d\tau$. Observe that $\aa$ and $\bX$ descend to a framing of $\cH_{\D^\ast} := \H_{\D^\ast} \otimes \cO_{\D^\ast}$. 

\subsection{The universal elliptic curve}

The group $\SL_2(\Z)$ acts on $\Z^2$ on the right by 
\begin{equation*}
    \label{eqn:Taction}
    \gamma : \begin{pmatrix}
    m & n
    \end{pmatrix} \mapsto \begin{pmatrix}
    m & n
\end{pmatrix} \gamma. 
\end{equation*}
The corresponding semidirect product $\SL_2(\Z) \ltimes \Z^2$ acts on $\C \times \h$ by 
\begin{equation}
    \label{eqn:ltimes}
    \left(\gamma,
    \begin{pmatrix}
    m & n
    \end{pmatrix}
    \right) : (z,\tau) \mapsto ((c\tau + d)^{-1}(z + m\tau + n), \gamma \tau).
\end{equation}
The (orbifold) quotient $(\SL_2(\Z) \ltimes \Z^2) \bbs (\C \times \h)$ is the {\em universal elliptic curve} $\E$. It is a fiber bundle over $\M_{1,1}$ where $E_\tau$ is the fiber over $\tau$. 

\subsection{Level structures} 

For any elliptic curve $E$, there is a natural isomorphism 
\begin{equation}
    \label{eqn:torsion}
    E[N] \cong H_1(E,\Z/N\Z). 
\end{equation}
A {\em level $N$ structure} on $E$ is an isomorphism 
$$
H_1(E,\Z/N\Z) \to (\Z/N\Z)^2
$$
where the intersection pairing on $H_1(E,\Z/N\Z)$ corresponds to the standard symplectic inner product on $(\Z/N\Z)^2$.

Denote the kernel of the natural homomorphism $\SL_2(\Z) \to \SL_2(\Z/N\Z)$ by $\G(N)$. It acts trivially on level $N$ structures via \eqref{eqn:framing}. Thus, the orbifold quotient $Y(N) := \G(N) \bbs \h$ is the moduli space of elliptic curves with level $N$ structure. 

A {\em congruence subgroup} $\G \subset \SL_2(\Z)$ of level $N$ is a subgroup containing $\G(N)$. The orbifold quotient $Y_\G := \G \bbs \h$ of the upper half plane $\h$ by a congruence subgroup $\G$ is called a {\em modular curve}. \label{not:YG} Of particular importance in \cite{thesis, gonch:sym} is the subgroup
$$
\G_1(N) := \left\{\begin{pmatrix} a & b \cr c & d \end{pmatrix} \equiv \begin{pmatrix} 1 & \ast \cr 0 & 1 \end{pmatrix} \bmod N \right\} \subset \SL_2(\Z).
$$
The action of $\G_1(N)$ on a framing $\{\aa,\bb\}$ of $H_1(E,\Z)$ stabilizes $\frac{1}{N}\aa \in H_1(E,\Z/N\Z)$, which via the natural isomorphism \eqref{eqn:torsion}, is equivalent to the choice of $N$-torsion point of $E$. Thus, the orbifold quotient $Y_1(N) := \G_1(N) \bbs \h$ is the moduli space of elliptic curves with a distinguished point of order $N$.  

Denote the pullback of the universal elliptic curve $\E \to \M_{1,1}$ to a modular curve $Y_\G$ by $\E_\G$.\label{not:EG} As in the introduction, we will abbreviate $\E_{\G(N)}$ by $\E_N$. We also denote the pullback of $\E$ to $\h$ by $\E_\h$. 

\subsection{Compactification}
\label{sec:Ecompact}

Let $\G \subset \SL_2(\Z)$ be a congruence subgroup. The quotient $\G \bs \P^1(\Q)$ is finite and its elements are called {\em cusps} of the modular curve $Y_\G$. For each cusp $P$, there exists $\alpha \in \SL_2(\Z)$ such that $\alpha \tau = i\infty$ under the action \eqref{eqn:modular}. The {\em width} of the cusp $P$ is the smallest positive integer $w_P$ such that 
$$
\alpha^{-1}\begin{pmatrix}
1 & w_P \cr 0 & 1
\end{pmatrix}
\alpha \in \G.
$$
The stabilizer of $P$ is given by
$$
\G_P = \G \cap \left\{\pm\alpha^{-1}\begin{pmatrix} 
1 & w_P\Z \cr 0 & 1 
\end{pmatrix} \alpha\right\}.
$$ 

Let $C_\G = \G \cap \{\pm I\}$. For each cusp $P$ of $Y_\G$, let $\D_P$ be an open disk of radius $e^{-2\pi/w_P}$. The compactification $X_\G := \overline{Y_\G}$ \label{not:XG} is obtained by gluing the quotient $C_\G \bbs \D_P$, where group $C_\G$ acts trivially on $\D_P$, to $Y_\G$ at each cusp along the maps 
$$
\xymatrix{C_\G \bbs \D_P & \ar[r] \G_P \bbs \{\tau \in \h \mid \Im(\tau) > 1\} \ar[l] & \G \bbs \h}.
$$
The left hand map sends $\tau \in \h$ to $e^{2\pi i\tau/w_P} \in \D_P$ and includes $C_\G$ into $\G_P$. We denote the compactifications of $Y_1(N)$ and $Y(N)$ by $X_1(N)$ and $X(N)$, respectively. 

The universal elliptic curve $\E_\G$ over $Y_\G$ has a natural smooth compactification $\overline{\E}_\G$ over $X_\G$ \label{not:EGbar}. The smooth orbi surface $\overline{\E}_\G$ is obtained by gluing in nodal curves above each cusp. If $P$ is a cusp of $Y_\G$ with width $w_P$, then the fiber of $\overline{\E}_\G$ above $P$ is a $w_P$-gon of $\P^1$'s obtained by taking $\P^1 \times (\Z/w_P\Z)$ and identifying $(\infty,n)$ with $(0,n+1)$ for each $n \in \Z/w_P\Z$. A full description is given in \cite{DR:nodal}. Above a cusp of width 1 (such as $\tau = i\infty$ in $X_1(N)$), the singular fiber is the nodal cubic $E_0$\label{not:nodal}. 

\subsection{Torsion sections} 
\label{torsion}

Denote the set of $N$-torsion sections of $\E_\G$ and $\E_\h$ by $\E_\G[N]$\label{not:EGN} and $\E_\h[N]$, respectively. Set $\E_\G' := \E_\G - \E_\G[N]$\label{not:EGminus} and $\E_\h' = \E_\h[N]$. If $\alpha \in \E_\G[N]$, it pulls back to a unique section $\tilde{\alpha} \in \E_\h[N]$. The lift $\tilde{\alpha}$ has {\em coordinates} $(x_\alpha,y_\alpha) \in (N^{-1}\Z/\Z)^2$ such that 
\begin{equation}
\label{eqn:lift}
    \tilde{\alpha} : \tau \longmapsto x_\alpha\tau +y_\alpha \bmod \Lambda_\tau.
\end{equation}
This defines a group isomorphism $\E_\h[N] \to (N^{-1}\Z/\Z)^2$, and the natural right action of $\SL_2(\Z)$ on $(\Z/N\Z)^2$ induces a right action on $\E_\h[N]$. Then 
$$
\E_\G[N] = \E_\h[N]^\G \cong \left((N^{-1}\Z/\Z)^2\right)^\G.
$$

The closure of a section $\alpha \in \E_\G[N]$ intersects the $\P^1$ and $E_0$ components of the singular fibers of $\overline{\E}_\G$ at an $N$th root of unity. For example, $Y_1(N)$ has two cusps: one under $\tau = i \infty$ has width 1 and the other under $\tau = 0$ has width $N$. The $N$ sections of $\E_{\G_1(N)}[N]$ intersect the nodal cubic over $\tau = i \infty$ at the roots of unity $\bmu_N \subset \Gm \subset E_0$, and they intersect the $N$-gon of projective lines over $\tau = 0$ at the identity in each copy of $\Gm \subset \P^1$. 

\subsection{Moduli with tangent vectors} 

Let $\cL_\G \to X_\G$ be the normal bundle of the identity section of the universal elliptic curve $\E_\G$. This is the dual of the Hodge bundle. The bundle $\cL_\G$ is the moduli space of elliptic curves $(E,P)$ with a $\G$-structure and distinguished tangent vector $\vv \in T_PE$ \cite[\S 5.4]{hain:ell}. Equivalently, the fiber of $\cL$ over an elliptic curve $(E,P)$ is the tangent space $T_PE$ of $E$ at its identity. 

Let $Y_{\G,\vec{1}}$ denote the $\C^\times$-bundle $\cL_\G^\times$ . It is the moduli space of elliptic curves with $\G$-structure and a {\em nonzero} tangent vector at its identity.

\section{Eisenstein series and Jacobi forms}

We briefly review some special functions which appear in the coefficients of the KZB connection. 

\subsection{Eisenstein series in full level $N$} Suppose that $m$ is a positive integer and that $\alpha$ is a section of $\E_N[N]$ over $Y(N)$ with coordinates $(x_\alpha,y_\alpha) \in (N^{-1}\Z/\Z)^2$. The Eisenstein series $G_{m,\alpha}(\tau)$\label{not:Gma} is given by
$$
G_{m,\alpha}(\tau) = \sum_{(k,\ell)\neq (0,0)} \frac{e^{2\pi i(ky_\alpha - \ell x_\alpha)}}{(k\tau + \ell)^m}.
$$
Note that the series does not depend on the choice of lift of $(x_\alpha, y_\alpha) \in (N^{-1}\Z/\Z)^2$ to $(N^{-1}\Z)^2$. The series converges almost uniformly on $\h$ when $m > 2$. When $m=2$, the series must be summed in a particular order if $\alpha = 0$. These Eisenstein series satisfy the twisted weight $m$ modularity property
\begin{equation}
    \label{eqn:eismodular}
    G_{m,\alpha}(\gamma\tau) = (c \tau + d)^m G_{m,\alpha\gamma}(\tau),
\end{equation}
for all $\gamma \in \SL_2(\Z)$. The right action of $\gamma$ on $\alpha$ is as in \eqref{eqn:Taction}. Thus, $G_{m,\alpha}(\tau)$ is modular with respect to a congruence subgroup $\G$ if and only if $\alpha \in \E_\G[N]$.

At the cusp under $\tau = i\infty$, the Eisenstein series $G_{m,\alpha}(\tau)$ takes the value
$$
\lim_{\tau \to i\infty} G_{m,\alpha}(\tau) = \sum_{\ell \neq 0} \frac{e^{-2\pi i\ell x_\alpha}}{\ell^m} = -\frac{(-2\pi i)^mB_m([x_\alpha])}{m!},
$$
where $B_m(x)$ is the $m$th Bernoulli polynomial and $[x_\alpha]$ is the unique rational number in $[0,1)$ such that $x_\alpha - [x_\alpha] \in \Z$.

\subsection{Eisenstein series with respect to $\G_1(N)$}

A section $\alpha \in \E_{\G_1(N)}[N]$ has coordinates $(0,y_\alpha)$. The corresponding Eisenstein series $G_{m,\alpha}(\tau)$ is modular with respect to $\G_1(N)$. In this case, we will typically write $G_{m,\alpha}(\tau)$ as $G_{m,\zeta}(\tau)$\label{not:Gmz} where $\zeta$ is the $N$th root of unity $e^{2\pi i y_\alpha}$. The defining series becomes
$$
G_{m,\zeta}(\tau) = \sum_{(k,\ell) \neq (0,0)} \frac{\zeta^k}{(k\tau + \ell)^m}.
$$
If $q = e^{2\pi i \tau}$, then $G_{m,\zeta}(\tau)$ has Fourier expansion $\sum_{n = 0}^\infty a_n q^n$, where 
$$
a_0 = (1 + (-1)^m)\zeta(m)
$$
and 
\begin{equation}
    \label{eqn:eisfourier}
    a_n = \frac{(-2\pi i)^m}{N^m(m-1)!}\left(\sum_{d|n} \sgn(d) d^{m-1} +  \sum_{\substack{(k,\ell) \in (\Z/N)^2 \\ (k,\ell) \neq (0,0)}} \zeta^k \sum_{\substack{d|nN \\ Nn/d \equiv k}} \sgn(d)d^{m-1}e^{2\pi i \ell d/N}\right)
\end{equation}
when $n \ge 1$. The corresponding normalized Eisenstein series is
$$
\bG_{m,\zeta}(\tau) = \frac{(m - 1)!}{(2\pi i)^m}\frac{1}{\zetabar + (-1)^m\zeta}G_{m,\zeta}(\tau) = -\frac{B_m}{m(\zetabar + (-1)^m\zeta)} + q + \cdots.
$$
Except when $m = 2$ and $\zeta = 1$, the functions $G_{m,\zeta}$ and $\bG_{m,\zeta}$ satisfy the modularity property
$$
G_{m,\zeta}(\gamma\tau) = (c\tau + d)^m G_{m,\zeta}(\tau) 
$$
for all $\gamma \in \G_1(N)$. Complex conjugation induces the relations 
$$
G_{m,\zetabar}(\tau) = (-1)^m G_{m,\zeta}(\tau) \quad \text{and} \quad \bG_{m,\zetabar}(\tau) = \bG_{m,\zeta}(\tau). 
$$
Finally, the values of $G_{m,\zeta}$ at the cusps are given by 
$$
G_{m,\zeta}(\tau)\big|_{q =0} = (1 + (-1)^m)\zeta(m)
$$
and 
$$
(c\tau + d)^{-m} G_{m,\zeta}(\gamma\tau)\big|_{q = 0} = \sum_{k \neq 0} \frac{\zeta^{- ck}}{k^m}.
$$
for all $\gamma \in \SL_2(\Z)$.

\subsection{Jacobi forms}

Other functions appearing in the KZB connection are derived from Zagier's Jacobi form \cite{zagier}
$$
F^\Zag_\tau(u,v) = \frac{\theta'(0|\tau)\theta(u+v|\tau)}{\theta(u|\tau)\theta(v|\tau)},
$$
where $\theta(z|\tau)$ is the classical theta function
$$
\theta(z|\tau) = \sum_{n \in \mathbb{Z}} (-1)^n e^{i \pi \tau(n + 1/2)^2}e^{z(n + 1/2)}.
$$
Levin and Racinet \cite{LR} express the KZB connection in terms of the function $F^\LR(x,z,\tau)$, which is related to Zagier's function by 
$$
F^\LR(x,z,\tau) = 2\pi i F^\Zag(2\pi i x,2\pi iz, \tau).
$$
Calaque--Enriquez--Etinghof \cite{CEE}, Gonzalez \cite{gonza}, and Calaque--Gonzalez \cite{CG} implicitly use the same Jacobi form as Levin--Racinet.\footnote{The function $k(x,z|\tau)$ in \cite{CEE,gonza,CG} can be expressed as $F^\LR(x,z,\tau) - \frac{1}{x}$.} We shall use the Jacobi form\footnote{This normalization is used implicitly in \cite{rome} and is best suited for viewing the KZB connection as the de~Rham realization of a variation of MHS.}
\begin{equation}
    \label{eqn:F}
    F(x,z,\tau) := F^\LR(x/(2\pi i),z,\tau) = 2\pi iF^\Zag(x,2\pi iz,\tau).
\end{equation}
Zagier \cite[\S 3]{zagier} computes the Fourier expansion of $F$ to be 
\begin{equation}
        \label{eqn:Ffourier}
         F(x,z,\tau) = \pi i\left(\coth(x/2) + \coth(\pi i z)\right) + 4\pi i \sum_{n = 1}^\infty \sum_{d|n} \sinh\left(dx + \frac{2\pi i nz}{d}\right)q^n,
\end{equation}
where $q = e^{2\pi i \tau}$. It follows that $F(x,z,\tau)$ has simple poles along $x = 0$ and $z = 0$ with residue $2\pi i$ and 1, respectively
\begin{equation}
    \label{eqn:Fres}
    F(x,z,\tau) = \frac{2\pi i}{x} + \frac{1}{z} + \text{holomorphic terms}.
\end{equation}

\section{Unipotent completion}
\label{sec:unipotent}

In this section we review some basic facts about unipotent completion. More details can be found in \cite{knudson}.

Let $\G$ be a discrete group and $F$ a field of characteristic zero. The unipotent completion of $\G$ over F is a pro-unipotent group $\cU$ over $F$ together with a homomorphism $\phi: \G \to \cU(F)$ with the following universal property: if $\psi : \G \to U(F)$ is a homomorphism from $\G$ into the $F$-points of a unipotent $F$-group $U$, then there is a unique homomorphism $\cU \to U$ of $F$-groups such that the diagram
$$
\xymatrix{\G \ar[dr]_\psi \ar[r]^\phi & \cU(F) \ar[d] \cr & U(F) }
$$
commutes.

When $H_1(\G,F)$ is finite dimensional, there is a concrete construction of $\U$ that goes back to Quillen \cite{quillen}. Denote the group algebra of $\G$ over $F$ by $F\G$. It is a Hopf algebra with coproduct that takes each $\gamma \in \G$ to $\gamma \otimes \gamma$. The standard augmentation $\e : F\G \to F$ takes each $\gamma \in \G$ to 1. The augmentation ideal $I$ is its kernel. The $I$-adic completion of $F\G$ is the complete augmented Hopf algebra
$$
F\Gamma^\wedge = \varprojlim_n F\Gamma/I^n.
$$ 
The coproduct on $F\G$ induces an augmentation preserving $F$-algebra homomorphism $\Delta : F\Gamma^\wedge  \to F\Gamma^\wedge \hat{\otimes} F\Gamma^\wedge$.

Define the group-like elements 
$$
\mathcal{P}(F) = \{x \in F\Gamma^\wedge \mid \epsilon(x) = 1 \text{ and } \Delta x = x\otimes x\}
$$ 
and the primitive elements 
$$
\mathfrak{p}(F) = \{x \in F\Gamma^\wedge \mid \Delta x = x \otimes 1 + 1 \otimes x\}.
$$ 
Then $\cP(F) \subset 1 + I^\wedge$ forms a group and $\p(F) \subset I^\wedge$ is its Lie algebra. The inclusion $\G \to F\G$ induces a natural homomorphism $\G \to \cP(F)$. Exponentiation $\exp : \mathfrak{p}(F) \to \mathcal{P}(F)$ is a group isomorphism, where the group structure of $\mathfrak{p}(F)$ is determined by the Baker--Campbell--Hausdorff formula \cite{serre}.

Define the continuous dual
$$
(F\Gamma)^\vee := \varinjlim_n \Hom_F(F\Gamma/I^n,F).
$$
When $H_1(\G,F)$ is finite dimensional, this is a commutative Hopf algebra. Thus, $\Spec (F\Gamma)^\vee$ is a pro-unipotent affine $F$-group. There is a homomorphism
$$
\G \to \Spec (F\Gamma)^\vee
$$
defined by $\gamma \mapsto \{\phi \mapsto \phi(\gamma)\}$. By the universal mapping property, this induces a map
$$
\U \to \Spec (F\Gamma)^\vee.
$$
\begin{prop}
If $H_1(\G,F)$ is finite dimensional, then this is an isomorphism of pro-unipotent $F$-groups. It induces an isomorphism
$$
\U(F) \to \mathcal{P}(F)
$$
The Lie algebra of $\U$ is naturally isomorphic to $\p$.
\end{prop}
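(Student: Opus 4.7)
The plan is to verify that $\widetilde{\cU} := \Spec (F\G)^\vee$ satisfies the universal property defining $\cU$, from which the remaining assertions follow.

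First I would establish the finite-dimensional setup. The hypothesis $\dim_F H_1(\G,F) = \dim_F I/I^2 < \infty$, combined with the fact that $I^n/I^{n+1}$ is a quotient of $(I/I^2)^{\otimes n}$, implies each $F\G/I^n$ is finite dimensional. Hence $(F\G)^\vee = \varinjlim_n (F\G/I^n)^\vee$ is a filtered colimit of finite-dimensional Hopf algebras, and it inherits a well-defined commutative Hopf algebra structure dual to the cocommutative one on $F\G^\wedge$ (the product on $(F\G)^\vee$ being dual to $\Delta$, the coproduct dual to multiplication). Next I would show $\widetilde{\cU}$ is pro-unipotent by using the filtration $V_n := (F\G/I^{n+1})^\vee \subset (F\G)^\vee$. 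Each $V_n$ is a finite-dimensional sub-Hopf algebra, so $\widetilde{\cU} = \varprojlim_n \Spec V_n$ is an inverse limit of finite-dimensional affine $F$-groups; since $V_n$ is dual to the $I$-adic filtration, whose associated graded $\bigoplus_n I^n/I^{n+1}$ is connected and generated in degree one, each $\Spec V_n$ is unipotent.

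For the universal property and the identification of $F$-points, note that an $F$-algebra homomorphism $(F\G)^\vee \to F$ corresponds under finite-dimensional duality exactly to a group-like element of $F\G^\wedge$; this gives the canonical isomorphism $\widetilde{\cU}(F) = \cP(F)$ and the homomorphism $\G \to \widetilde{\cU}(F)$ sending $\gamma$ to itself. Given any $\psi : \G \to U(F)$ with $U$ (pro-)unipotent, write $D(U) := \cU(\Lie U)^\wedge$ for the completed universal enveloping algebra; this is a complete Hopf algebra whose group-like elements form $U(F) = \exp(\Lie U)$. Linear extension of $\psi$ yields $F\G \to D(U)$, and since group-likes augment to $1$, the augmentation ideal $I$ of $F\G$ maps into the augmentation ideal of $D(U)$ and hence $I^n$ into its $n$-th power. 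This extends continuously to $F\G^\wedge \to D(U)$; dualizing (using the identification $F[U] = D(U)^{\vee,\cts}$ for pro-unipotent $U$) produces the required morphism $\widetilde{\cU} \to U$, whose uniqueness follows because it is determined by its restriction to $\G \subset \cP(F) = \widetilde{\cU}(F)$.

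The Lie algebra statement is then immediate from Hopf-algebra formalism: $\Lie \widetilde{\cU} = (I_R/I_R^2)^\vee$ with $R = (F\G)^\vee$ and $I_R$ its augmentation ideal, and this is the space of primitive elements of $R^\vee = F\G^\wedge$, namely $\p$. I expect the main obstacle to be making the topological duality between $F\G^\wedge$ and $(F\G)^\vee$ fully rigorous, in particular justifying that $\Delta : F\G^\wedge \to F\G^\wedge \hat{\otimes} F\G^\wedge$ is dual to multiplication on $(F\G)^\vee$ and that $F[U] = D(U)^{\vee,\cts}$ for pro-unipotent $U$ — but these issues all reduce cleanly to finite-dimensional Hopf algebra duality at each level of the filtration, which is why the finite-dimensionality of $H_1(\G,F)$ is the key hypothesis.
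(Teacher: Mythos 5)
The paper states this proposition without proof (it is quoted as a standard fact going back to Quillen, with the literature cited for details), so there is no in-text argument to compare against; your sketch is the standard construction, and most of it is sound. However, one step is false as written: $V_n := (F\G/I^{n+1})^\vee$ is \emph{not} a sub-Hopf algebra of $(F\G)^\vee$ --- it is only a subcoalgebra. The product on $(F\G)^\vee$ is dual to the coproduct $\Delta$ of $F\G$, and since $\Delta(I^{n+1}) \subseteq \sum_{p+q=n+1} I^p \otimes I^q$ contains terms with both $p \le n$ and $q \le n$, the product of two functionals vanishing on $I^{n+1}$ need not vanish on $I^{n+1}$; what is true is only $V_p \cdot V_q \subseteq V_{p+q}$. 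Concretely, for $\G = \Z$ with generator $t$, the functional $f : t^k \mapsto k$ lies in $V_1$, but $f \cdot f : t^k \mapsto k^2$ takes the value $2$ on $(t-1)^2$ and so does not. Consequently $\Spec V_n$ is not a group scheme and the presentation $\widetilde{\cU} = \varprojlim_n \Spec V_n$ is not meaningful as stated --- and this is precisely the step on which your proof of pro-unipotence rests, which is the one structural assertion of the proposition that genuinely needs the finite-dimensionality hypothesis.

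The gap is repairable by a standard modification, in either of two ways. (a) Replace $V_n$ by the subalgebra $R_n$ it generates: since $V_n$ is a finite-dimensional subcoalgebra stable under the antipode (dual to $\gamma \mapsto \gamma^{-1}$, which preserves $I^{n+1}$), $R_n$ is a finitely generated sub-Hopf algebra, $(F\G)^\vee = \varinjlim R_n$, and each $R_n$ is connected (irreducible) as a coalgebra because $(F\G)^\vee$ is; a finitely generated commutative Hopf algebra with coradical $F$ is the coordinate ring of a unipotent group. (b) Bypass $(F\G)^\vee$ for this point: the functor $R \mapsto \{\text{group-like elements of } R \otimes F\G/I^{n+1}\}$ is represented by a closed subgroup scheme of the unipotent group $1 + I/I^{n+1}$, and $\Spec (F\G)^\vee$ is the inverse limit of these. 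The remainder of your sketch --- the identification of $F$-algebra homomorphisms $(F\G)^\vee \to F$ with group-like elements of $F\G^\wedge$, the construction of $F\G^\wedge \to \U(\Lie U)^\wedge$ from $\psi$ using $I^n \mapsto J^n$, the uniqueness via the fact that elements of $(F\G)^\vee$ are determined by their values on $\G$, and the identification of the Lie algebra with the primitives --- is the standard argument and is correct at the level of detail given.
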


\begin{example}
\label{ex:free}
Let $\G = \langle \sigma_1,\ldots,\sigma_n\rangle$ be a free group on $n$ generators. Then $\p(F)$ is the completed free Lie algebra $\bL(\ee_1,\ldots,\ee_n)^\wedge$ \label{not:free}over $F$ generated by $\{\ee_1,\ldots,\ee_n\}$. The map $\G \to \cP(F)$ takes $\sigma_j$ to $\exp\ee_j$.
\end{example}

Since KZB is concerned with unipotent fundamental groups, we will abbreviate the unipotent completion of $\pi_1(X,x)$ as $\piun(X,x)$\label{not:piun} and its Lie algebra as $\p(X,x)$.\label{not:p} Unless specified otherwise, we always take the coefficient field to be $\Q$.

\section{The holomorphic vector bundle $\bP_N$}
\label{sec:KZBinfo}

In this section we construct the holomorphic vector bundle $\bP_N \to \E_N$ on which the KZB connection is defined. As in the $N = 1$ case \cite{CEE,LR}, we will define $\bP_N$ as a quotient of the trivial bundle $\h\times\C \times \p_N \to \h\times\C$ over the universal cover $\h \times \C$ of $\E_N$ by an action of $\SL_2(\Z)\ltimes \Z^2$ which acts via a suitable factor of automorphy.

Note that Calaque and Gonzalez \cite{CG} define their connection over $\M_{1,n+1}(N)$, the moduli space of $(n+1)$-pointed elliptic curves with a level $N$ structure. We consider only the $n=1$ case, where $\M_{1,2}(N) = \E_N'$, as that is all that is needed for our applications.

\subsection{Factors of automorphy}

Suppose a discrete group $G$ acts on a topological space $X$ on the left and that $V$ is a left $G$-module. A {\em factor of automorphy} is a function $M : G \times X \to \Aut V$ written as $(g,x) \mapsto M_g(x)$ such that $g$ acts on $X \times V$ via
$$
g : (x,v) \mapsto (g \cdot x,M_g(x)v). 
$$
This requires 
\begin{equation}
    \label{eqn:Mg}
    M_{gh}(x) = M_g(hx)M_h(x)
\end{equation}
for all $g, h \in G$ and $x \in X$. A section $s$ of the bundle $X \times V \to X$ is $G$-equivariant and descends to a section of $G\bs (X \times V) \to G\bs X$ if $s(g\cdot x) = M_g(x)s(x)$ . 

\subsection{Factors of automorphy of $\bP_N$}
\label{sec:PN}

The factors of automorphy of $\bP_N \to \E_N$
$$
\widetilde{M}_{\gamma,(m,n)}(\tau,z) : (\SL_2(\Z) \ltimes \Z^2) \times (\h \times \C) \to \Aut \p_N
$$
are defined by the formula
\begin{equation}
    \label{eqn:automorphy}
    \widetilde{M}_{\gamma,(m,n)}(\tau,z) = \left\{
        \begin{array}{ll}
            M_{\gamma}(\tau) \circ \exp(cz\bX/(c\tau + d)) & \gamma \in \G(N) \cr
            \exp(-m\bX) & (m,n) \in \Z^2,
        \end{array}
    \right.
\end{equation}
where 
\begin{equation}
    \label{eqn:M}
    M_{\gamma}(\tau) : \left\{\begin{array}{ccl}
    \bX &\longmapsto &(c\tau + d)\bX  \\
    \bY &\longmapsto &(c\tau + d)^{-1} \bY + \frac{c}{2\pi i} \bX \\
    \bt_\alpha &\longmapsto &\bt_{\alpha \gamma^{-1}}
\end{array}\right.
\end{equation}

\begin{remark}
The factor of automorphy $M_\gamma(\tau)$ is the same as \eqref{eqn:framing} after the change of basis $\bX = \tau \aa - \bb$ and $\bY = \frac{1}{2\pi i}\aa$. Thus, the quotient of $\bP_N$ by the subalgebra generated by $\{\bt_\alpha\}$ is isomorphic to the pullback of $\H$ to $\E_N$.
\end{remark}
\begin{prop}
The factors of automorphy $\widetilde{M}_{\gamma,(m,n)}(z,\tau)$ satisfy \eqref{eqn:Mg}.
\end{prop}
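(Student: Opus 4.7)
The plan is to verify the cocycle identity
$$
\widetilde{M}_{gh}(\tau, z) = \widetilde{M}_g(h \cdot (\tau, z)) \, \widetilde{M}_h(\tau, z)
$$
case by case for pairs $(g, h)$ where each of $g$, $h$ lies in one of the two generating subsets $\{(\gamma, 0) : \gamma \in \G(N)\}$ and $\{(1, v) : v \in \Z^2\}$. A short calculation from \eqref{eqn:ltimes} shows the semidirect product law is $(\gamma, v)(\gamma', v') = (\gamma\gamma', v\gamma' + v')$, so there are four generator-generator cases to check. The mixed cases do double duty: they simultaneously define $\widetilde{M}_{(\gamma, v)}$ on mixed elements via the cocycle formula and verify consistency between the two normal forms $(\gamma, 0)(1, v) = (\gamma, v)$ and $(1, w)(\gamma, 0) = (\gamma, w\gamma)$.

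The translation-translation case is immediate since $\exp(\ad \bX)$ commutes with itself. The modular-modular case is the technical core. I would first establish the sub-identity $M_{\gamma\gamma'}(\tau) = M_\gamma(\gamma'\tau) \circ M_{\gamma'}(\tau)$ by direct evaluation on the generators $\bX$ and $\bY$ of $\p_N$; agreement of the $\bX$-coefficient in the $\bY$-computation forces the use of $\det \gamma' = 1$. To combine the exponential twists in $\widetilde{M}_{(\gamma, 0)}$ and $\widetilde{M}_{(\gamma', 0)}$, I would apply the intertwining rule $\phi \circ \exp(\ad x) = \exp(\ad \phi(x)) \circ \phi$ with $\phi = M_{\gamma'}(\tau)$, using $M_{\gamma'}(\tau)^{-1}\bX = (c'\tau + d')^{-1}\bX$. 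Matching then reduces to the identity
$$
\frac{c}{(c''\tau + d'')(c'\tau + d')} + \frac{c'}{c'\tau + d'} = \frac{c''}{c''\tau + d''},
$$
where $c''$, $d''$ are the entries of $\gamma\gamma'$; after clearing denominators this becomes $c'd'' + c = c''d'$, which unpacks once more to $\det \gamma' = 1$.

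For each of the two mixed cases, computing $\widetilde{M}_g(h\cdot x)\widetilde{M}_h(x)$ yields a formula of the form $M_\gamma(\tau)\circ\exp\bigl(\ad\bigl(\tfrac{cz + A}{c\tau + d}\bX\bigr)\bigr)$, with $A$ a specific linear combination of $m$, $n$ and the entries of $\gamma$. In the ordering $(\gamma, 0)(1, v)$, the $\bX$-exponentials commute directly after the shift $z \mapsto z + m\tau + n$ introduced by $h$. In the reverse ordering $(1, v)(\gamma, 0)$, one must first conjugate $\exp(-m\bX)$ through $M_\gamma(\tau)$ using the same intertwining rule. Agreement of the two resulting formulas for the shared element $(\gamma, v\gamma)$ reduces once more to a short linear identity equivalent to $ad - bc = 1$.

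The main obstacle is bookkeeping rather than anything conceptual. Several $(c\tau + d)$-type denominators must be tracked through the intertwining step, and one must carefully distinguish between the automorphisms $M_\gamma(\tau)$ and the inner derivations $\ad(\lambda \bX)$ when rearranging compositions. The recurring appearance of $\det \gamma = 1$ is the structural reason the cocycle closes: it is exactly what allows the $\Z^2$ translations and the $\G(N)$ modular transformations to conjugate past one another coherently.
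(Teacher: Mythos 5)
Your proposal is correct and carries out in full the direct verification that the paper simply defers to \cite[\S 6]{hain:kzb}: you check the cocycle identity on the two generating factors of $\G(N)\ltimes\Z^2$ and on the mixed products, with the determinant condition $ad-bc=1$ entering exactly where you say it does (in the $\bX$-coefficient of $M_{\gamma\gamma'}(\tau)\bY$, in combining the exponential twists, and in reconciling the two normal forms $(\gamma,0)(1,v)$ and $(1,v\gamma^{-1})(\gamma,0)$). The only point worth adding is that \eqref{eqn:M} leaves the action of $M_\gamma(\tau)$ on the generators $\bt_\alpha$ implicit (it is trivial, since $\gamma\in\G(N)$ fixes the $N$-torsion sections), so the sub-identity $M_{\gamma\gamma'}(\tau)=M_\gamma(\gamma'\tau)\circ M_{\gamma'}(\tau)$ holds on those generators for free.
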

\begin{proof}
This follows from \eqref{eqn:ltimes}, \eqref{eqn:automorphy}, and \eqref{eqn:M}. See \cite[\S 6]{hain:kzb}.
\end{proof}

Thus, there is a well-defined left action of $\G(N) \ltimes \Z^2$ on the trivial bundle $\h \times \C \times \p_N \to \h \times \C$ given by 
$$
(\gamma,(m,n)) : (\tau,z,\v) \mapsto \left(\gamma\tau, \frac{z + m\tau + n}{c\tau + d}, \widetilde{M}_{\gamma,(m,n)}(\tau,z)(\v)\right).
$$
The bundle $\bP_N \to \E_N$ is the quotient of $\h \times \C \times \p_N \to \h \times \C$ by this action. Only small modifications are necessary to construct the analogous bundle over $\E_\G$ for any congruence subgroup $\G$ (see \S\ref{sec:other}).

\section{The connection form}

We can finally define the KZB connection. The formulas in \S\ref{sec:der} through \S\ref{sec:kzb} are adaptations of \cite{CG,gonza}.

\subsection{Derivations}
\label{sec:der}

The connection form takes values in $\Der \p_N$. The relevant derivations are indexed by integers $m \ge 0$ and $N$-torsion sections $\alpha \in \E_N[N]$.
$$
\delta_{m,\alpha} : \left\{ 
\begin{array}{ccl}
    \bX & \longmapsto & 0 \cr
    \bY & \longmapsto & \displaystyle \sum_{j + k = m  - 1} \sum_\beta (-1)^j [\ad_{\bX}^j(\bt_\beta),\ad_{\bX}^k (\bt_{\alpha + \beta})] \cr
    \bt_\beta & \longmapsto & [\bt_\beta, \ad^m_{\bX}(\bt_{\beta + \alpha}) + (-1)^m\ad^m_{\bX}(\bt_{\beta-\alpha})].
\end{array}
\right.
$$
It is straightforward to verify
$$
\delta_{m,\alpha}([\bX,\bY]) = \sum_\beta \delta_{m,\alpha}(\bt_\beta),
$$ 
and therefore $\delta_{m,\alpha}$ is well-defined on $\p_N$.
For $m \geq 0$, set
\begin{equation}
\label{eqn:ep}
\epsilon_{m + 2,\alpha} = \delta_{m,\alpha} + \ad_{\bX^m}(\bt_\alpha + (-1)^m\bt_{-\alpha}).
\end{equation}
The identities
$$
\e_{m+2,\alpha}(\bt_0) = 0 \quad \text{and} \quad \e_{m,\alpha} = (-1)^m\e_{m,-\alpha}
$$
are useful in calculations.
\begin{remark}
\label{rem:esp}
The derivations $\e_{m,\alpha}$ are {\em special}. That is, they annihilate $\bt_0$ and for every $\beta \in \E_N[N]$ satisfy
$$
\e_{m,\alpha} : \bt_\beta \mapsto [u_\beta,\bt_\beta]
$$
for some $u_\beta \in \p_N$.
\end{remark}

\subsection{Coefficients}
\label{sec:coeff}

The coefficients of the connection form are constructed from Jacobi forms and indexed by $\alpha \in \E_N[N]$. Define
$$
h_\alpha(x,z|\tau) := e^{-x_\alpha x} F(x,z - \tilde{\alpha},\tau) - \frac{2\pi i}{x}, 
$$
where $F$ is the Jacobi form from \eqref{eqn:F}. The function $h_\alpha$ is independent of choice of lift $\tilde{\alpha}$ due to the elliptic property \cite[\S3]{zagier} 
$$
F(x,z + m\tau + n,\tau) = e^{-mx}F(x,z,\tau).
$$ 
Let $g_\alpha(x,z|\tau)$ denote the partial derivatives
$$
g_\alpha(x,z|\tau) := \frac{\partial}{\partial x}h_\alpha(x,z|\tau).
$$

The Jacobi form $F$ also satisfies a modularity property \cite[\S3]{zagier}
$$
F\left(\frac{x}{c\tau + d},\frac{z}{c\tau + d},\gamma\tau\right) = (c\tau+d)\exp\left(\frac{czx}{c\tau + d}\right)F(x,z,\tau)
$$
for all $\gamma \in \SL_2(\Z)$. This induces the relations
\begin{equation}
\label{eqn:hinv}
    \begin{aligned}
    h_\alpha(x,z|\tau + 1) &= h_{\alpha T}(x,z|\tau) \cr 
    h_\alpha\left(x,\frac{z}{\tau} \middle| -\frac{1}{\tau}\right) &= \tau e^{zx} h_{\alpha S}(\tau x,z|\tau) + 2\pi i \frac{e^{zx} - 1}{x},
    \end{aligned}
\end{equation}
where $T = \mat{1 & 1 \cr 0 & 1}$ and $S = \mat{0 & -1 \cr 1 & 0}$\label{not:ST} are generators of $\SL_2(\Z)$. Differentiating yields 
\begin{equation}
\label{eqn:ginv}
\begin{aligned}
    g_\alpha(x,z|\tau + 1) &= g_{\alpha T} (x,z|\tau) \cr 
    g_\alpha \left(x,\frac{z}{\tau} \middle| -\frac{1}{\tau}\right) &= \tau z e^{zx}h_{\alpha S}(\tau x,z|\tau) \cr 
    & \qquad + \tau^2 e^{zx} g_{\alpha S}(\tau x,z|\tau) \cr 
    & \qquad + 2\pi i \frac{zxe^{zx} - e^{zx} + 1}{x^2}.
\end{aligned}
\end{equation}

Define $A_{m,\alpha}(\tau)$ to be the coefficients of the Taylor expansion of $g_\alpha(x,0|\tau)$ with respect to $x$:
$$
g_\alpha(x,0|\tau) = \sum_{m = 0}^{\infty}A_{m,\alpha}(\tau)x^m.
$$
It follows directly from \eqref{eqn:ginv} that these functions $A_{m,\alpha}(\tau)$ satisfy the weight $m+2$ modularity equation 
\begin{equation}
    \label{eqn:Amodular}
    A_{m,\alpha}(\gamma \tau) = (c\tau + d)^{m+2}A_{m,\alpha\gamma}(\tau)
\end{equation}
for all $\gamma \in \SL_2(\Z)$. 

\subsection{Formula}
\label{sec:kzb}

We can now write down an explicit formula for the level $N$ KZB connection. It is given by 
$$
\nabla_{\KZB_N} = d + \Omega_N,
$$
where $\Omega_N$ is a 1-form on $\h \times \C$ with logarithmic singularities along $\Lambda_N$, the preimage of $\E_N[N]$ in $\h \times \C$. It takes values in $\Der \p_N$, which  acts on the left of $\p_N$. 

Define the following 1-forms.  
$$
\psi = \frac{1}{2} \sum_{\substack{m \geq 0 \\ \alpha \in (N^{-1}\Z/\Z)^2}} A_{m,\alpha}(\tau)\delta_{m,\alpha}\, d\tau
$$

$$
\nu_1 = \sum_{\alpha \in (N^{-1}\Z/\Z)^2} g_\alpha\left(\bX,z\middle|\tau\right) \cdot \bt_\alpha \, d\tau
$$

$$
\nu_2 = \left(2\pi i\bY + \sum_{\alpha \in (N^{-1}\Z/\Z)^2} h_\alpha\left(\bX,z\middle|\tau\right) \cdot \bt_\alpha\right) \, dz 
$$
Then the KZB connection form is
$$
\Omega_N = 2\pi i \bY \frac{\partial}{\partial \bX} \, d\tau + \psi + \nu_1 + \nu_2.
$$
It follows from \eqref{eqn:Fres} that $h_\alpha(\bX,z|\tau)$ has simple poles along the preimage of the section $\alpha \in \E_N[N]$ in $\h \times \C$. We also know from \eqref{eqn:Ffourier} that $F(x,z,\tau)$ is holomorphic with respect to $q = e^{2\pi i \tau}$ and non-vanishing at $q = 0$. Thus, $\Omega_N$ has logarithmic singularities along $\E_N[N]$ and the singular fibers of $\E_N$ as claimed. 
\begin{prop}
\label{prop:invariance}
The KZB connection is invariant with respect to the action of $\G(N) \ltimes \Z^2$ on the trivial bundle $\h \times \C \times \p_N \to \h \times \C$. Thus, $\nabla_{\KZB_N}$ is a well-defined connection on the bundle $\bP_N \to \E_N$.
\end{prop}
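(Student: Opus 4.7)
The condition for $\nabla_{\KZB_N} = d + \Omega_N$ to descend to $\bP_N$ is that, for every $g \in \G(N) \ltimes \Z^2$,
$$
\phi_g^*\,\Omega_N \;=\; \Ad(\widetilde M_g)\,\Omega_N \;-\; (d\widetilde M_g)\,\widetilde M_g^{-1},
$$
where $\phi_g$ denotes the action \eqref{eqn:ltimes} on $\h\times\C$ and $\widetilde M_g$ is the factor of automorphy \eqref{eqn:automorphy}. The cocycle identity \eqref{eqn:Mg} makes the defect multiplicative in $g$, so it suffices to verify the identity on the two factors $\Z^2$ and $\G(N)$ separately.

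\textbf{Translations.} For $g = (m,n) \in \Z^2$, the factor $\widetilde M_g = \exp(-m\bX)$ is constant in $(\tau,z)$, so $d\widetilde M_g = 0$. The map $\phi_g$ fixes $\tau$, sends $z \mapsto z + m\tau + n$, and produces $\phi_g^*(dz) = dz + m\,d\tau$. From Zagier's elliptic property $F(x, z + m\tau + n, \tau) = e^{-mx} F(x,z,\tau)$ one derives
$$
h_\alpha(\bX,\, z + m\tau + n \mid \tau) \;=\; e^{-m\bX}\!\cdot h_\alpha(\bX, z \mid \tau) \;+\; \frac{2\pi i\,(e^{-m\bX} - 1)}{\bX},
$$
with a companion identity for $g_\alpha$ by differentiation in $x$. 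Substituting into $\nu_1$ and $\nu_2$, the residual $(e^{-m\bX}-1)/\bX$ terms, together with the extra $m\,d\tau$ from $\phi_g^*(dz)$, precisely reproduce what $\Ad(e^{-m\bX})$ does to the piece $2\pi i \bY\,\partial/\partial\bX\,d\tau$; meanwhile $\psi$ is unchanged since its coefficients $A_{m,\alpha}(\tau)$ are $z$-independent.

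\textbf{Modular transformations.} For $\gamma \in \G(N)$, $\widetilde M_\gamma(\tau,z) = M_\gamma(\tau)\exp(cz\bX/(c\tau+d))$ depends on both variables, so the gauge term $(d\widetilde M_\gamma)\,\widetilde M_\gamma^{-1}$ is genuinely present. I would substitute the modular identities \eqref{eqn:hinv}, \eqref{eqn:ginv} into $\nu_1$ and $\nu_2$, and the weight-modularity $A_{m,\alpha}(\gamma\tau) = (c\tau+d)^{m+2} A_{m,\alpha\gamma}(\tau)$ (obtained from \eqref{eqn:ginv} at $z=0$) into $\psi$, re-indexing the sums appropriately. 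The weight factors $(c\tau+d)^k$ are absorbed by the conjugation action of $M_\gamma$ on the generators $\bX, \bY, \bt_\beta$, while the extra terms $(e^{czx}-1)/x$ and $(czxe^{czx}-e^{czx}+1)/x^2$ appearing in \eqref{eqn:hinv} and \eqref{eqn:ginv} are exactly what is needed to reproduce the contribution of the exponential factor in $\widetilde M_\gamma$ to $(d\widetilde M_\gamma)\,\widetilde M_\gamma^{-1}$.

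\textbf{Main obstacle.} The subtle step is the modular case. Besides extracting a conjugation formula of the shape $\Ad(M_\gamma)\,\delta_{m,\alpha} = (c\tau+d)^{m}\,\delta_{m,\alpha\gamma^{-1}}$ (up to conventions) from the explicit definition of $\delta_{m,\alpha}$ and \eqref{eqn:M}, one must verify that the three $d\tau$-coefficients of $\Omega_N$ --- namely $2\pi i\bY\,\partial/\partial\bX$, $\psi$, and $\nu_1$ --- reassemble correctly after $M_\gamma$ acts on $\bX, \bY, \bt_\beta$ simultaneously and the extra terms from the exponential factor in $\widetilde M_\gamma$ are incorporated. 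Keeping track of the $2\pi i/\bX$ subtracted in the definition of $h_\alpha$, and its interaction with $\Ad(\exp(cz\bX/(c\tau+d)))$, is where the bookkeeping becomes most intricate.
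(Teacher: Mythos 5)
Your plan follows exactly the same route as the paper's Appendix~\ref{sec:invariance}: the same gauge-transformation criterion $g^\ast\Omega_N = \Ad(\widetilde M_g)\Omega_N - (d\widetilde M_g)\widetilde M_g^{-1}$, the same splitting into the $\Z^2$ and $\G(N)$ cases, and the same key inputs (the elliptic property of $F$ for translations; \eqref{eqn:hinv}, \eqref{eqn:ginv}, and the modularity \eqref{eqn:Amodular} of the $A_{m,\alpha}$ for modular transformations), with the correct identification of where the cancellations must occur. What remains is only the bookkeeping you flag as the main obstacle, which the paper executes in a chain of lemmas and finally reduces to the level-1 identity $\gamma^\ast(2\pi i\bY\,\partial/\partial\bX\,d\tau) - \Ad(M_\gamma)(2\pi i\bY\,\partial/\partial\bX\,d\tau) + dM_\gamma M_\gamma^{-1} = 0$ cited from Hain.
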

\begin{proof}
The connection is invariant if 
$$
\nabla_{\KZB_N}(\widetilde{M}_gs) = \widetilde{M}_g(\nabla_{\KZB_N} s)
$$ 
for all $g \in \G(N) \ltimes \Z^2$ and sections $s : \h \times \C \to \p_N$. This is equivalent to
$$
g^\ast \Omega_N = \widetilde{M}_g\Omega_N \widetilde{M}_g^{-1} - (d\widetilde{M}_g)\widetilde{M}_g^{-1}
$$
for all $g \in \G(N) \ltimes \Z^2$. Verifying this equation involves a lengthy computation requiring \eqref{eqn:hinv} and \eqref{eqn:ginv}. See Appendix \ref{sec:invariance}.
\end{proof}

\begin{prop}
The KZB connection is flat, i.e. the curvature form $d\Omega_N + \Omega_N \wedge \Omega_N$ vanishes.
\end{prop}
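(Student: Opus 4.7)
The connection form $\Omega_N$ lives on the two-dimensional base $\h \times \C$, so the curvature $d\Omega_N + \Omega_N \wedge \Omega_N$ has only a $d\tau \wedge dz$ component. Writing $\Omega_N = \Omega^\tau \, d\tau + \Omega^z \, dz$ with
$$
\Omega^\tau = 2\pi i \bY \, \partial/\partial \bX + \tfrac{1}{2}\sum_{m,\alpha} A_{m,\alpha}(\tau)\delta_{m,\alpha} + \sum_\alpha g_\alpha(\bX,z|\tau)\cdot \bt_\alpha, \qquad \Omega^z = 2\pi i \bY + \sum_\alpha h_\alpha(\bX,z|\tau)\cdot \bt_\alpha,
$$
flatness reduces to the single Maurer--Cartan identity $\partial_\tau \Omega^z - \partial_z \Omega^\tau + [\Omega^\tau,\Omega^z] = 0$ in $\Der \p_N$. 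The plan is to verify this by direct computation, grouping contributions according to their Lie-algebraic type (pieces involving $\partial/\partial \bX$, pieces linear in $\bY$, pieces supported on the $\bt_\alpha$'s, and pieces supported on the derivations $\delta_{m,\alpha}$) and showing each packet vanishes separately.

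The key analytic inputs are the functional identities satisfied by Zagier's Jacobi form $F$. First, the mixed heat equation for $F$ (which is essentially equivalent to the Kronecker limit formula, c.f.~\cite{zagier,CEE,LR}) relates $\partial_\tau F$ to $\partial_x \partial_z F$ and is what forces $\partial_\tau h_\alpha - \partial_z g_\alpha$ to match the commutator of $2\pi i \bY\,\partial/\partial\bX$ with $h_\alpha(\bX,z|\tau)\cdot \bt_\alpha$. Second, a Fay-type trisecant identity for $F$ expresses products $F(x_1,z-\tilde\alpha)F(x_2,z-\tilde\beta)$ in a form that is bilinear in $F$'s with a single $z$-dependence; this is what causes the commutator $[h_\alpha(\bX,z|\tau)\cdot\bt_\alpha,\, h_\beta(\bX,z|\tau)\cdot\bt_\beta]$ to collapse, once one invokes the defining relation $\sum_\alpha \bt_\alpha = [\bX,\bY]$ of $\p_N$. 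Finally, the derivations $\delta_{m,\alpha}$ (and hence the coefficients $A_{m,\alpha}(\tau)$ arising from the Taylor expansion of $g_\alpha(x,0|\tau)$) are tailored precisely to absorb the residual terms that survive after Fay's identity has been applied, using the defining formula $\e_{m+2,\alpha} = \delta_{m,\alpha} + \ad_{\bX^m}(\bt_\alpha + (-1)^m \bt_{-\alpha})$ to rewrite $[\Omega^\tau_{\psi\text{-part}}, \bY]$ in terms of $\bt_\alpha$-valued contributions.

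The main obstacle is the bookkeeping for the Fay-type cancellation: one must expand the commutator $[\sum_\alpha g_\alpha \cdot \bt_\alpha,\, \sum_\beta h_\beta \cdot \bt_\beta]$ and simultaneously track how $\ad_\bX$-words interact with the product formula for $F$, then pair the surviving pieces against $\partial_\tau h_\alpha - \partial_z g_\alpha$ and against the $\psi$-commutators. Because the coefficient functions take values in the noncommutative ring $\C\ll \ad_\bX\rr$, both sides must be matched as formal series in $\bX$ after using the elliptic and modular functional equations \eqref{eqn:hinv}, \eqref{eqn:ginv}. In the $N=1$ case this computation is carried out in \cite{CEE,LR,hain:kzb}, and for general $N$ in \cite{CG,gonza}; the adaptation required here is cosmetic, amounting to the change of normalization $F(x,z,\tau) = 2\pi i F^\Zag(x,2\pi i z,\tau)$ made in \eqref{eqn:F}. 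Since the derivations $\delta_{m,\alpha}$, the functions $h_\alpha$ and $g_\alpha$, and the defining relation of $\p_N$ are transported consistently under this rescaling, the proof reduces to verifying that the identities of \cite{CG} remain valid, which can be done coefficient by coefficient in the Taylor expansion in $\bX$ and $z$.
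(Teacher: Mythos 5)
Your overall strategy---reduce to the single Maurer--Cartan identity on the two--dimensional base, feed in the heat equation and a Fay-type identity for $F$, and defer the combinatorial verification to \cite{CG,gonza} (or \cite{CEE,LR,hain:kzb} in level 1) after checking the normalization $F(x,z,\tau)=2\pi i F^\Zag(x,2\pi i z,\tau)$ is transported consistently---is viable, and in its reliance on the published computation it is close to what the paper does: the paper proves $d\Omega_N=0$ by hand and cites \cite[Prop.~3.9]{CG} for $\Omega_N\wedge\Omega_N=0$.

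However, your claimed cancellation pattern is misorganized, and the packets you describe would not each vanish. The point is that $d\Omega_N$ and $\Omega_N\wedge\Omega_N$ vanish \emph{separately}. Indeed, $\bY\partial/\partial\bX\,d\tau$ and $\psi$ are $z$-independent hence closed, and the heat equation $\partial_\tau F=\partial_z\partial_x F$ gives exactly $\partial_\tau h_\alpha=\partial_z g_\alpha$ (the extra term coming from the $\tau$-dependence of $\tilde\alpha=x_\alpha+y_\alpha\tau$ matches the term from differentiating $e^{-y_\alpha\bX}$ in $g_\alpha=\partial_\bX h_\alpha$), so $d\nu_1=-d\nu_2$ and $d\Omega_N=0$ on the nose. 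Consequently the quantity $\partial_\tau h_\alpha-\partial_z g_\alpha$ that you propose to pair against the commutator $\bigl[2\pi i\bY\,\partial/\partial\bX,\ h_\alpha(\bX,z|\tau)\cdot\bt_\alpha\bigr]$ is identically zero, while that commutator is not: it must instead be absorbed inside $\Omega_N\wedge\Omega_N$ by the Fay-type identity, the relation $\sum_\alpha\bt_\alpha=[\bX,\bY]$, and the $\delta_{m,\alpha}$-terms of $\psi$. So if you insist on ``showing each packet vanishes separately'' with the grouping you describe, the proof breaks; you should either regroup so that all first-order (exterior derivative) terms form one vanishing packet and all quadratic (commutator) terms form another, or genuinely carry out the mixed bookkeeping, which is the hard part you are currently attributing to the wrong identity.
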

\begin{proof}
See \cite[Prop. 3.9]{CG}. 
\end{proof}
This generalizes the proof in \cite[Prop 1.2]{CEE} of flatness in the $N = 1$ case. The $N = 1$ case was also independently proven in \cite[Prop. 3.2.2]{LR} (see also \cite[\S 9.4--5]{hain:kzb}).

\section{Monodromy and iterated integrals}

In section \S\ref{sec:rigidity}, we prove our first main result: The KZB connection on $\bP_N$ is isomorphic to the flat vector bundle $\bP_N^\topo\otimes_\Q \cO_{\E'_N}$. We will make this argument by identifying the fibers over a single point and proving their monodromy representations are compatible. In this section, we review the monodromy formula and use it to construct an isomorphism by which we can identify these fibers.

\subsection{Monodromy of trivial bundles with connection}

Suppose a discrete group $G$ acts on a topological space $X$ on the left and that $V$ is a $G$-module with factor of automorphy $M : G \times X \to \Aut V$. Fix a base point $x_o \in G\bs X$ and lift $\tilde{x}_o \in X$. This determines a surjection $\rho : \pi_1(G\bs X,x_o) \to G$.
\begin{prop}
\label{prop:mono}
Suppose that $\nabla$ is a $G$-invariant connection on $X \times V \to X$. Denote the inverse of monodromy\footnote{Our paths convention makes monodromy an anti-homomorphism. To get a homomorphism, we use its inverse instead.} of $G\bs (V \times X) \to G\bs X$ by $\Theta_{x_o} : \pi_1(G\bs X,x_o) \to \Aut V$. It is a homomorphism given by
$$
\Theta_{x_o}(\beta) = T(\tilde{\beta})^{-1} \circ M_{\rho(\beta)}(\tilde{x}_o),
$$
where $\tilde{\beta}$ is a lift of $\beta$ to $X$ based at $\tilde{x}_o$ and $T(\tilde{\beta})$ is the parallel transport of $X \times V \to X$ with respect to $\nabla$ along $\tilde{\beta}$.
\end{prop}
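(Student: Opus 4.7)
The plan is to compute the monodromy of $G\bs(X\times V)\to G\bs X$ directly from the definitions and then invert it. The two ingredients are the identification of fibers in the quotient coming from the factor of automorphy and the $G$-invariance of parallel transport coming from the $G$-invariance of $\nabla$.

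First I would use $\tilde{x}_o$ to identify the fiber of $G\bs(X\times V)$ over $x_o$ with $V$. A loop $\beta$ at $x_o$ lifts to a path $\tilde{\beta}$ in $X$ from $\tilde{x}_o$ to $\rho(\beta)\cdot\tilde{x}_o$; parallel transport along $\tilde{\beta}$ carries $v\in V$ to $T(\tilde{\beta})v$ over the endpoint. In the quotient, the endpoint is identified with $\tilde{x}_o$, and the equivalence relation $(x,v)\sim(g\cdot x,M_g(x)v)$ defining $G\bs(X\times V)$ converts an element $w$ in the fiber over $\rho(\beta)\tilde{x}_o$ into $M_{\rho(\beta)}(\tilde{x}_o)^{-1}w$ in the fiber over $\tilde{x}_o$. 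Therefore the monodromy operator of $\beta$ is $M_{\rho(\beta)}(\tilde{x}_o)^{-1}\circ T(\tilde{\beta})$, and its inverse is the stated formula for $\Theta_{x_o}(\beta)$. Independence of the lift $\tilde{\beta}$ is automatic because, starting from $\tilde{x}_o$, the lift is uniquely determined.

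Next I would verify the homomorphism property. The lift of the composed loop $\beta_1\beta_2$ starting at $\tilde{x}_o$ is $\tilde{\beta}_1$ concatenated with $\rho(\beta_1)\cdot\tilde{\beta}_2$. The $G$-invariance of $\nabla$ translates into
$$
T(\rho(\beta_1)\cdot\tilde{\beta}_2) = M_{\rho(\beta_1)}(\rho(\beta_2)\tilde{x}_o)\circ T(\tilde{\beta}_2)\circ M_{\rho(\beta_1)}(\tilde{x}_o)^{-1},
$$
and the cocycle identity \eqref{eqn:Mg} gives
$$
M_{\rho(\beta_1\beta_2)}(\tilde{x}_o) = M_{\rho(\beta_1)}(\rho(\beta_2)\tilde{x}_o)\circ M_{\rho(\beta_2)}(\tilde{x}_o).
$$
Inverting the expression for $T(\widetilde{\beta_1\beta_2})$ obtained from these identities and composing, the middle factors telescope to yield $\Theta_{x_o}(\beta_1\beta_2) = \Theta_{x_o}(\beta_1)\circ\Theta_{x_o}(\beta_2)$.

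The proof is more bookkeeping than substance; the only subtlety is ordering conventions. With the topologist's convention for path composition, parallel transport satisfies $T(\alpha\beta) = T(\beta)\circ T(\alpha)$, so monodromy is genuinely an anti-homomorphism and one must invert it to obtain a representation of $\pi_1(G\bs X,x_o)$, matching the footnote in the statement. All other steps are direct unravellings of definitions.
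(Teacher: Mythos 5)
Your proposal is correct, and it is the standard direct verification: the paper itself does not write out an argument but simply cites \cite[\S5.2]{hain:kzb}, and your computation of the quotient-bundle monodromy as $M_{\rho(\beta)}(\tilde{x}_o)^{-1}\circ T(\tilde\beta)$ followed by inversion, together with the cocycle identity \eqref{eqn:Mg} and the $G$-equivariance of parallel transport for the homomorphism property, is exactly the intended argument. The sign and ordering conventions (topologist's path composition, hence $T(\alpha\beta)=T(\beta)\circ T(\alpha)$ and the need to invert) are handled consistently with the paper's footnote.
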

\begin{proof}
See \cite[\S5.2]{hain:kzb}.
\end{proof}
To make $\Theta_{x_o}$ explicit, we introduce iterated integrals. Suppose $\omega_1,\ldots,\omega_m$ are smooth 1-forms on $X$. Define the {\em iterated integral} 
$$
\int_{\tilde{\beta}} \omega_1 \cdots \omega_m := \int\limits_{0 \le t_1 \le \cdots \le t_m \le 1} f_1(t_1) \cdots f_m(t_m) \, dt_1 \cdots dt_m,
$$
where $\tilde{\beta}^\ast\omega_k = f_k(t) \, dt$. 
\begin{prop}
\label{prop:chen}
Suppose $\tilde{\beta} : [0,1] \to X$ is a piecewise smooth path. The {\em inverse parallel transport} of $\nabla = d + \omega$ on $V \times X \to X$ along $\tilde{\beta}$ is given by 
\begin{equation*}
    \label{eqn:chenT}
    T(\tilde{\beta})^{-1} = 1 + \int_{\tilde{\beta}} \omega + \int_{\tilde{\beta}} \omega\omega + \int_{\tilde{\beta}} \omega \omega \omega + \cdots.
\end{equation*}
\end{prop}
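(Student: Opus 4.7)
The plan is to pull the parallel-transport ODE back to $[0,1]$ along $\tilde\beta$, invert it to get an ODE for $T^{-1}$, and recognize the Picard iteration of that equation as a sum of iterated integrals in the topologist's convention. Concretely, write $\tilde\beta^\ast\omega = f(t)\,dt$ with $f : [0,1] \to \End V$. A horizontal section $s(t)$ of the pulled-back bundle satisfies $\dot s + f(t)\, s = 0$; if $s(t) = T_t\, s(0)$, then $\dot T_t = -f(t)\, T_t$ with $T_0 = I$, so $U_t := T_t^{-1}$ satisfies
$$\dot U_t = U_t\, f(t), \qquad U_0 = I,$$
and the desired inverse transport is $T(\tilde\beta)^{-1} = U_1$.

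Next I would rewrite this as the Volterra integral equation $U_t = I + \int_0^t U_s\, f(s)\,ds$ and iterate. After $n$ substitutions the leading term is
$$\int_{0\leq t_1\leq \cdots\leq t_n\leq 1} f(t_1)\, f(t_2) \cdots f(t_n)\,dt_1 \cdots dt_n,$$
which, unwinding the topologist's convention used in the paper to define iterated integrals, is precisely $\int_{\tilde\beta} \omega\omega\cdots\omega$ with $n$ factors. Summing over $n$ yields the stated expansion.

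For convergence in the KZB setting, $\omega$ takes values in $\Der \p_N$, and derivations preserve the lower central series filtration $C^\bullet \p_N$. Modulo $C^k \p_N$ the problem reduces to a finite-dimensional linear ODE, whose Picard expansion converges absolutely by the standard Picard--Lindel\"of estimate; passing to the inverse limit $\p_N^\wedge = \varprojlim_k \p_N / C^k \p_N$ gives convergence in the pro-nilpotent topology on $\End \p_N^\wedge$. The only real obstacle is bookkeeping: one must verify that iterating $U_s = I + \int_0^s U_r\, f(r)\,dr$ produces the factors of $f$ in the order matching the topologist's convention (increasing times paired with left-to-right products of integrand values), rather than the reverse order one would get from the differential equation for $T_t$ itself. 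Once this is checked, the proposition follows from standard ODE theory; a detailed treatment in the parallel $N = 1$ setting appears in \cite[\S 5]{hain:kzb}.
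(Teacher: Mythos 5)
Your proposal is correct and is exactly the standard direct argument: the paper itself gives no computation, deferring to Chen and to \cite[Lemma 2.5]{hain:bowdoin}, and your derivation (invert the transport ODE to get $\dot U_t = U_t f(t)$, iterate the Volterra equation, and check that the resulting time-ordering $t_1\le\cdots\le t_n$ with left-to-right products matches the paper's definition of $\int_{\tilde\beta}\omega\cdots\omega$) is precisely the content of that cited proof. The convergence remark via the lower central series filtration is the right one for the pro-nilpotent fibers used later, so nothing is missing.
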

\begin{proof}
This is Chen's formula \cite[\S 3]{chen}. See \cite[Lemma 2.5]{hain:bowdoin} for a direct proof.
\end{proof}
If the connection $\nabla$ is flat (e.g. KZB), then $T(\tilde{\beta})^{-1}$ depends only on the homotopy class of $\tilde{\beta}$. 

\subsection{Monodromy in a single fiber of $\bP_N$}

Let us apply the formulas in Propositions \ref{prop:mono} and \ref{prop:chen} to the KZB connection. Fix an elliptic curve $E_\tau = \C/\Lambda_\tau$ and a base point $x \in E_\tau'$. Parallel transport of the KZB connection restricted to the fiber $E_\tau'$ of $\E'$ induces an inverse monodromy representation 
$$
\Theta_\tau : \pi_1(E_\tau',x) \to \Aut\p_N.
$$

\begin{prop}
\label{prop:fiber}
The inverse of monodromy of the KZB connection restricted to the fiber $E_\tau'$ induces an isomorphism of completed free Lie algebras
$$
\psi_\tau : \p(E_\tau',x) \otimes_\Q \C \to \p_N.
$$
\end{prop}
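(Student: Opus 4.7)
The plan is to construct $\psi_\tau$ via the universal property of unipotent completion, and then verify it is an isomorphism by checking the induced map on abelianizations. The key structural input is that $E'_\tau$ is a genus $1$ Riemann surface with $N^2$ punctures, so $\pi_1(E'_\tau, x)$ is free of rank $N^2 + 1$ and hence $\p(E'_\tau, x)$ is a completed free Lie algebra of rank $N^2 + 1$ over $\Q$. The target $\p_N$ is also completed free of rank $N^2 + 1$ over $\C$, since the single relation $\sum_\alpha \bt_\alpha = [\bX, \bY]$ cuts the $N^2 + 2$ generators down by one. By the rigidity of completed free pronilpotent Lie algebras, any Lie algebra homomorphism between them inducing a surjection on $H_1$ is automatically an isomorphism.

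To construct $\psi_\tau$, I would restrict the KZB connection to the fiber $E'_\tau$. Every term of $\Omega_N$ carrying $d\tau$ then vanishes, leaving only $\nu_2|_\tau = \ad(\eta)\, dz$ with $\eta = 2\pi i \bY + \sum_\alpha h_\alpha(\bX, z \mid \tau) \cdot \bt_\alpha \in \p_N$. By Chen's formula (Proposition \ref{prop:chen}) together with the standard group-like property of iterated integrals of Lie-algebra-valued $1$-forms, inverse parallel transport along any lift $\tilde\beta$ to $\C$ equals $\Ad(g_{\tilde\beta})$ for a canonical element $g_{\tilde\beta} \in \cP_N(\C) = \exp(\p_N)$. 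The factors of automorphy from \eqref{eqn:automorphy} are $1$ on the $a$-cycle and on the small loops around punctures, and $\exp(-\bX) \in \cP_N$ on the $b$-cycle, so Proposition \ref{prop:mono} delivers a homomorphism $\tilde\Theta_\tau : \pi_1(E'_\tau, x) \to \cP_N(\C)$ lifting $\Theta_\tau$ through $\Ad$. By the universal property of unipotent completion, $\tilde\Theta_\tau$ factors through $\piun(E'_\tau, x)(\C)$ and is the exponential of a Lie algebra homomorphism $\psi_\tau : \p(E'_\tau, x) \otimes_\Q \C \to \p_N$.

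To check that $\psi_\tau$ is an isomorphism, I would compute its action on $H_1$ using the standard generators $a$, $b$, $\{\sigma_\alpha\}_{\alpha \in E_\tau[N]}$ of $\pi_1(E'_\tau, x)$, where $\sigma_\alpha$ is a small loop around $\alpha$. Extracting the linear term from Chen's series, the residue computation \eqref{eqn:Fres} gives $\psi_\tau([\sigma_\alpha]) \equiv 2\pi i\, \bt_\alpha$; the $2\pi i \bY\, dz$ contribution to $\nu_2$ gives $\psi_\tau([a]) \equiv 2\pi i \bY$ modulo the span of the $\bt_\alpha$; and for the $b$-cycle, the corresponding period integral combined with the automorphy factor $\exp(-\bX)$ contributes $2\pi i \tau \bY - \bX$ modulo the $\bt_\alpha$. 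Since any $\bt_\alpha$-contamination sits in the span of $\{\psi_\tau([\sigma_\alpha])\}$, these images together span $\p_N / [\p_N, \p_N]$, which has dimension $N^2 + 1$. Thus $\psi_\tau$ is an isomorphism on $H_1$, and hence an isomorphism of completed free pronilpotent Lie algebras. The step I expect to be the main obstacle is establishing that the holonomy indeed lies in $\Ad(\cP_N) \subset \Aut \p_N$ rather than in the full automorphism group: this relies on $\nu_2$ taking values in inner derivations together with the group-like property of Chen's iterated integrals of Lie-algebra-valued forms, after which the rest reduces to a bookkeeping calculation of residues and periods.
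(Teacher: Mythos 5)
Your proposal is correct and follows essentially the same route as the paper: restrict to the fiber so that only $\nu_2 = \ad(\eta)\,dz$ survives, combine Chen's formula (Proposition~\ref{prop:chen}) with the automorphy factors via Proposition~\ref{prop:mono}, observe that the resulting inverse monodromy lands in the inner automorphisms $\Ad(\exp\p_N)$, invoke the universal property of unipotent completion to get $\psi_\tau$, and check it is an isomorphism on $H_1$ by a residue/period computation, finishing with freeness. One small point worth noting: your appeal to the group-like property of Chen's iterated integrals to justify that the parallel transport of $\ad(\eta)\,dz$ is an inner automorphism $\Ad(g_{\tilde\beta})$ is actually a more careful justification than the paper's shorthand, which writes the time-ordered exponential $1 + \int\nu_2 + \int\nu_2\nu_2 + \cdots$ as $\exp\bigl(\int\nu_2\bigr)$; that identity is not literally correct in a nonabelian Lie algebra, but the conclusion that it equals $\Ad(g)$ for some group-like $g$ (which is all that is needed) is correct and is what you supply.
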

\begin{proof}
Fix a loop $\beta \in \pi_1(E_\tau',x)$ whose homology class $[\beta] \in H_1(E_\tau',\Z)$ has coordinates $m,n,c_\alpha \in \Z$ so that 
$$
[\beta] = n\aa + m\bb + \sum_\alpha c_{\alpha}\bt_\alpha \in H_1(E_\tau',\Z),
$$
where $\aa$ and $\bb$ are the framing of $H_1(E_\tau,\Z)$ defined in \S\ref{sec:framed} and each $\bt_{\alpha}$ denotes a small positively oriented loop about $\alpha \in E_\tau[N]$. Thus, $\beta$ lifts to a path $\tilde{\beta}$ in $\C$ from 0 to $m\tau + n$. We will now apply Proposition \ref{prop:mono} to compute $\Theta_\tau(\beta)$.

Since $\tau$ is fixed, we need only consider the $dz$ component of $\Omega_N$. Thus, by Proposition \ref{prop:chen} and the factors of automorphy \eqref{eqn:automorphy},
\begin{equation}
\label{eqn:fibermono}
\begin{aligned}
    \Theta_\tau(\beta) &= \left(1 + \int_{\tilde{\beta}} \nu_2 + \int_{\tilde{\beta}} \nu_2\nu_2 + \cdots \right)e^{-m\ad\bX} \cr
    &= \exp\left(\int_{\tilde{\beta}} \nu_2\right)e^{-m \ad\bX} \cr
    &= \Ad \left(\exp\left(\int_{\tilde{\beta}} \nu_2\right)e^{-m \bX}\right).
\end{aligned}
\end{equation}
Thus, the image of $\Theta_\tau$ is contained in the inner automorphisms of $\p_N$, which defines a map $\pi_1(E_\tau',x) \to \exp \p_N$. This induces a map from the unipotent completion $\piun(E_\tau',x) \to \exp \p_N$ and a map of Lie algebras $\psi_\tau : \p(E_\tau',x) \to \p_N$ given by 
$$
\psi_\tau : \log \beta \longmapsto \log \left(\exp\left(\int_{\tilde{\beta}} \nu_2\right)e^{-m \bX}\right).
$$
It remains to show that this map is an isomorphism after tensoring with $\C$. Recall from \S\ref{sec:coeff} that
$$
\nu_2 \equiv \left(2\pi i\bY + \sum_{\alpha} \frac{\bt_\alpha}{z - \tilde{\alpha}}\right) dz \bmod [\p_N,\p_N].
$$
Thus, 
$$
\int_{\tilde{\beta}} \nu_2 \equiv (m \tau + n)\aa + \sum_\alpha c_\alpha \bt_\alpha \bmod [\p_N,\p_N],
$$ 
and by the Baker--Campbell--Hausdorff formula,
\begin{align*}
\psi_\tau(\log \beta) &\equiv \log \left(\exp\left(\int_{\tilde{\beta}} \nu_2\right)e^{-m \bX}\right) \cr
&\equiv \left(\int_{\tilde{\beta}} \nu_2\right) - m\bX \cr
&\equiv n\aa + m\bb + \sum_\alpha c_\alpha \bt_\alpha  \cr
&\equiv [\beta] \bmod [\p_N,\p_N].
\end{align*}
Therefore, the induced map on homology $(\psi_\tau)_\ast : H_1(\p(E_\tau',x)) \otimes_\Q \C \to H_1(\p_N)$ is an isomorphism. The result follows from the fact that both $\p(E_\tau',x)$ and $\p_N$ are free.
\end{proof}

\section{Rigidity}
\label{sec:rigidity}

We are now ready to prove the KZB connection and $\bP_N^\topo \otimes \cO_{\E_N'}$ are isomorphic as flat vector bundles. Identify the fibers of $\bP^\topo_N \otimes_\Q \C$ and $\bP_N$ above the base point $[E_\tau,x]$ by the isomorphism $\psi_\tau$ in Proposition \ref{prop:fiber}. This also induces an identification of automorphism groups via
$$
\Ad \psi^{-1}_\tau : \Aut \p_N \to \Aut \piun(E_\tau',x).
$$
Let $G$ denote the fundamental group $\pi_1(\E_N',[E_\tau,x])$. There are monodromy representations 
$$
\rho^\KZB : G \to \Aut \p_N \quad \text{and} \quad \rho^\topo : G \to \Aut \piun(E_\tau',x) 
$$
of $\bP_N$ and $\bP_N^\topo$, respectively. To complete the argument, we must prove the diagram
\begin{equation}
    \label{eqn:rigidity}
    \begin{gathered}
        \xymatrix{
        & \Aut \p_N \ar[dd]^{\Ad\psi^{-1}_\tau} \cr
        G \ar[ru]^{\rho^\KZB} \ar[rd]_{\rho^\topo} & \cr
        & \Aut \p(E_\tau',x) 
        }
    \end{gathered}
\end{equation}
commutes.

We apply a general lemma of Hain. Let $N$ be a normal subgroup of a discrete group $\G$. Let $\mathcal{N}$ denote the unipotent completion of $N$. The conjugation homomorphism $\G \to \Aut N$ induces a map $\phi : G \to \Aut \mathcal{N}$. Restriction of $\phi$ to $N$ maps $n \in N$ to $\iota_{\theta(n)}$, where $\theta : N \to \mathcal{N}$ is inclusion and $\iota_u \in \Aut \mathcal{N}$ is conjugation by $u \in \mathcal{N}$.  
\begin{lemma}[{{\cite[Lemma 14.1]{hain:kzb}}}]
If $\mathcal{N}$ has trivial center, then $\phi$ is the unique homomorphism $\G \to \Aut \mathcal{N}$ whose restriction to $N$ is $n \mapsto \iota_{\theta(n)}$.
\end{lemma}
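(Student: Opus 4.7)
The plan is to prove uniqueness by comparing two putative extensions. Existence of $\phi$ is automatic: conjugation in $\G$ preserves $N$ by normality, and the resulting action extends to $\mathcal{N}$ via the universal property of unipotent completion. So suppose $\phi_1,\phi_2 : \G \to \Aut \mathcal{N}$ both restrict on $N$ to $n \mapsto \iota_{\theta(n)}$, and for each $g \in \G$ set $\psi_g := \phi_2(g)^{-1}\phi_1(g) \in \Aut \mathcal{N}$. It suffices to show $\psi_g = \id$ for every $g$.

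The first step exploits normality. For any $n \in N$ we have $gng^{-1} \in N$, so
$$
\phi_i(g)\,\iota_{\theta(n)}\,\phi_i(g)^{-1} \;=\; \phi_i(gng^{-1}) \;=\; \iota_{\theta(gng^{-1})} \qquad (i=1,2),
$$
and equating the two expressions yields the commutation relation $\psi_g \circ \iota_{\theta(n)} = \iota_{\theta(n)} \circ \psi_g$ in $\Aut \mathcal{N}$, valid for all $n \in N$. Evaluating this on an arbitrary $x \in \mathcal{N}$ and using that $\psi_g$ is a group homomorphism shows that $\theta(n)^{-1}\psi_g(\theta(n))$ commutes with every element of $\psi_g(\mathcal{N}) = \mathcal{N}$, so it lies in the center of $\mathcal{N}$. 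The trivial-center hypothesis then forces $\psi_g(\theta(n)) = \theta(n)$, i.e.\ $\psi_g \circ \theta = \theta$ as homomorphisms $N \to \mathcal{N}$.

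To conclude, I invoke the universal property of unipotent completion recalled in Section~\ref{sec:unipotent} for the homomorphism $\theta : N \to \mathcal{N}$ with target $\mathcal{N}$ itself: its extension to an algebraic homomorphism $\mathcal{N} \to \mathcal{N}$ is unique, and since both $\id$ and $\psi_g$ are such extensions, they coincide. Hence $\phi_1(g) = \phi_2(g)$ for every $g \in \G$. I expect the center-of-$\mathcal{N}$ step to be the only substantive point: it is the unique place the trivial-center hypothesis enters, converting a formal commutation of automorphisms into the pointwise identity $\psi_g \circ \theta = \theta$ needed to invoke the universal property. Everything else is routine bookkeeping about conjugation and inner automorphisms.
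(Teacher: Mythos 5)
Your argument is correct; the paper offers no proof of this lemma (it simply cites Hain's Lemma 14.1), and your uniqueness argument is exactly the standard one underlying the cited result: the conjugation identity forces $\psi_g$ to commute with every inner automorphism $\iota_{\theta(n)}$, hence $\iota_{\psi_g(\theta(n))}=\iota_{\theta(n)}$, the trivial-center hypothesis upgrades this to $\psi_g\circ\theta=\theta$, and the uniqueness clause of the universal property of unipotent completion (applied to the pro-unipotent target $\mathcal{N}$ via its finite-dimensional quotients) forces $\psi_g=\id$. No gaps.
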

\begin{theorem}
The KZB connection over $\E_N'$ is naturally isomorphic to the flat vector bundle $\bP^\topo_N\otimes_\Q \cO_{\E_N'}$.
\end{theorem}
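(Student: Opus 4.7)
The plan is to apply the rigidity lemma just stated to the short exact sequence
\[
1 \to \pi_1(E_\tau', x) \to G \to \pi_1(Y(N), [E_\tau]) \to 1,
\]
with the normal subgroup taken to be $\pi_1(E_\tau', x)$ and its unipotent completion $\mathcal{N} = \piun(E_\tau', x)$. The first hypothesis to verify is that $\mathcal{N}$ has trivial center. This holds because $\p(E_\tau', x)$ is a free pronilpotent Lie algebra of rank $N^2 + 1 \ge 2$, and any free Lie algebra of rank at least two has trivial center.

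Next I would show that both homomorphisms $\rho^\topo$ and $\Ad\psi_\tau^{-1} \circ \rho^\KZB$, viewed as maps $G \to \Aut \mathcal{N}$, restrict on $\pi_1(E_\tau', x)$ to the conjugation homomorphism $n \mapsto \iota_{\theta(n)}$. For $\rho^\topo$ this is immediate from the definition of $\bP_N^\topo$: its restriction to $E_\tau'$ is the local system whose stalk at $y$ is $\p(E_\tau', y)$, and parallel transport along a loop $n$ is the change-of-basepoint automorphism $\alpha \mapsto n^{-1}\alpha n$, which after unipotent completion becomes conjugation by $\theta(n)$.

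For $\Ad \psi_\tau^{-1} \circ \rho^\KZB$, I would observe that the restriction of $\rho^\KZB$ to $\pi_1(E_\tau', x)$ coincides with the fiber monodromy $\Theta_\tau$ of Proposition \ref{prop:fiber}, since KZB parallel transport along loops lying in $E_\tau'$ uses only the pullback of the connection form $\Omega_N$ to that fiber. That proposition supplies the explicit identity $\Theta_\tau(\beta) = \Ad(\exp \psi_\tau(\log \beta))$, so conjugating by $\psi_\tau$ converts this into inner conjugation in $\mathcal{N}$ by $\exp \log \beta = \theta(\beta)$, exactly as required.

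With both restrictions shown to agree on $\pi_1(E_\tau', x)$, the rigidity lemma forces $\Ad \psi_\tau^{-1} \circ \rho^\KZB = \rho^\topo$ throughout $G$, which is the commutativity of \eqref{eqn:rigidity}. The substantive work has already been done in Proposition \ref{prop:fiber}, where $\psi_\tau$ was constructed precisely so that fiber monodromy appears as inner conjugation by the loop itself; given this, the present theorem is a formal consequence of the rigidity lemma. The only remaining obstacle is the routine verification that restricting the total monodromy representation to loops within a single fiber recovers the fiber monodromy, which is immediate from the functoriality of parallel transport.
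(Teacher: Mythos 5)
Your proposal is correct and follows essentially the same route as the paper: invoke Hain's rigidity lemma with $N = \pi_1(E_\tau',x)$ (trivial center since $\p(E_\tau',x)$ is free of rank $N^2+1 \ge 2$), then use the formula $\Theta_\tau(\beta) = \Ad\bigl(\exp\psi_\tau(\log\beta)\bigr)$ from the proof of Proposition \ref{prop:fiber} to check that $\Ad\psi_\tau^{-1}\circ\rho^\KZB$ restricts on the fiber subgroup to conjugation, matching $\rho^\topo$. No gaps.
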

\begin{proof}
We know $\piun(E',x)$ is free and thus has trivial center. The monodromy representation $\rho^\topo$ is induced by conjugation. Set $\G$ to be $G = \pi_1(\E_N',[E_\tau,x])$, $N = \pi_1(E_\tau',x)$, and $\phi = \rho^\topo$. Applying the lemma, it suffices to show the diagram \eqref{eqn:rigidity} commutes when restricting to $N = \pi_1(E_\tau',x)$. 

Set $\varphi = \exp \circ \psi_\tau \circ \log$. By \eqref{eqn:fibermono},
$$
\rho^\KZB(\beta) = \Ad (\exp \psi (\log \beta)) = \Ad(\varphi(\beta))
$$
for all $\beta \in \pi_1(E_\tau',x)$. Then 
\begin{align*}
    ((\Ad \psi^{-1}_\tau) \cdot \rho^\KZB)(\beta) &= \psi^{-1}_\tau \circ \Ad(\varphi(\beta)) \circ \psi_\tau \cr
    &= \psi^{-1}_\tau \circ \exp(\ad \psi_\tau(\log \beta)) \circ \psi_\tau \cr
    &= \exp (\ad \log \beta) \cr
    &= \Ad \beta \cr
    &= \rho^\topo(\beta).
\end{align*}
\end{proof}




\section{Restrictions and residues}

In \S\ref{sec:avmhsP} we will prove $\bP_N$, endowed with its natural Hodge and weight filtrations, is an admissible variation of MHS and that the isomorphism of Lie algebras $\psi_\tau : \p(E_\tau',x) \otimes_\Q \C \to \p_N$ in Proposition \ref{prop:fiber} is in fact an isomorphism of MHS. A preliminary step is to compute the residues of the connection along the boundary divisor of $\E_N'$ in $\overline{\E}_N$. We devote this section to this task. 

\subsection{Restriction to torsion sections}

We will first compute the restriction of the KZB connection to a first-order neighborhood of the identity section of $\E_N \to Y(N)$. This is equivalent to restricting the connection to the moduli $Y_{\G(N),\vec{1}}$. Concretely, the restriction is given by 
$$
G(z,\tau)\, d\tau + H(z,\tau) \, \frac{dz}{z} \mapsto G(0,\tau) \, d\tau + H(0,\tau)\, \frac{dz}{z}.
$$
\begin{prop}
\label{prop:coeff}
The Taylor coefficients $A_{m,\alpha}(\tau)$ are level $N$ Eisenstein series
\begin{equation*}
    \label{eqn:coeff}
    A_{m,\alpha}(\tau) = -\frac{(m+1)}{(2\pi i)^{m+1}} G_{m+2,\alpha}(\tau).
\end{equation*}
\end{prop}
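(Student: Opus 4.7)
The strategy is to identify both $A_{m,\alpha}(\tau)$ and $-\frac{m+1}{(2\pi i)^{m+1}} G_{m+2,\alpha}(\tau)$ as modular forms of weight $m+2$ on $\Gamma(N)$ with the same transformation law in $\alpha$, and then to pin down the identity by computing Fourier expansions at the cusp $\tau = i\infty$. For modularity, specializing \eqref{eqn:ginv} to $z = 0$ (with the natural correction $g_\alpha \leadsto g_{\alpha\gamma}$ in the middle term, as follows from differentiating \eqref{eqn:hinv} with respect to $x$) gives $g_\alpha(x, 0\,|\,\gamma\tau) = (c\tau+d)^2\, g_{\alpha\gamma}((c\tau+d)x, 0\,|\,\tau)$. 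Extracting the coefficient of $x^m$ yields $A_{m,\alpha}(\gamma\tau) = (c\tau+d)^{m+2}\, A_{m,\alpha\gamma}(\tau)$, which is precisely the transformation law \eqref{eqn:eismodular} of $G_{m+2,\alpha}$. Holomorphy of $F$ in $\tau$ shows $A_{m,\alpha}$ is holomorphic on $\h$.

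Next, I would compute the constant term at $i\infty$. For $\alpha$ with $[y_\alpha] \in (0,1)$, the Fourier expansion \eqref{eqn:Ffourier} gives $\coth(-\pi i \tilde{\alpha}) \to 1$ as $\tau \to i\infty$, so
$$
\lim_{\tau \to i\infty} h_\alpha(x, 0\,|\,\tau) = \frac{2\pi i\, e^{(1 - [y_\alpha])x}}{e^x - 1} - \frac{2\pi i}{x}.
$$
The Bernoulli generating function $\tfrac{x e^{sx}}{e^x - 1} = \sum_n B_n(s)\tfrac{x^n}{n!}$ then yields
$$
\lim_{\tau \to i\infty} A_{m, \alpha}(\tau) = \frac{2\pi i\,(m+1)\, B_{m+2}(1 - [y_\alpha])}{(m+2)!}.
$$
Combining the reflection identity $B_n(1 - x) = (-1)^n B_n(x)$ with the value of $G_{m+2, \alpha}(i\infty)$ stated in \S 4 verifies that this agrees with $-\tfrac{m+1}{(2\pi i)^{m+1}} G_{m+2, \alpha}(i\infty)$. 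The case $[y_\alpha] = 0$, $\alpha \ne 0$ is similar: both sides vanish for $m$ odd, and for $m$ even both reduce to the classical Bernoulli/zeta relation via $G_{m+2, \alpha}(i\infty) = -(2\pi i)^{m+2} B_{m+2}/(m+2)!$. Since $A_{m,\alpha}$ and $G_{m+2,\alpha}$ transform identically under $\SL_2(\Z)$, the equality of constant terms propagates to every cusp of $Y(N)$.

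Finally, to upgrade equality at cusps to full equality of Fourier expansions, one substitutes $z = -\tilde{\alpha}$ into \eqref{eqn:Ffourier}: the $\sinh$ terms produce contributions of the form $q^{n \pm n[y_\alpha]/d}$ which, after multiplication by $e^{-[y_\alpha]x}$ and reorganization by divisor, recover precisely the double divisor sum \eqref{eqn:eisfourier} of $G_{m+2,\alpha}$. A cleaner alternative is to first prove the Kronecker--Eisenstein identity
$$
g_\alpha(x, 0\,|\,\tau) = -\frac{1}{2\pi i} \sum_{(k, \ell) \ne (0, 0)} \frac{e^{2\pi i(k x_\alpha - \ell y_\alpha)}}{\bigl(k\tau + \ell - x/(2\pi i)\bigr)^2},
$$
whose Taylor expansion in $x$ directly yields the claimed formula. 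This identity is a manifestation of Kronecker's second limit formula applied to the Jacobi form $F$, and it sidesteps the divisor-sum bookkeeping. The main obstacle is the technical combinatorics of matching divisor sums in the direct route, or equivalently the derivation of the Kronecker--Eisenstein identity in the slicker route; there is also a subtlety at $\alpha = 0$, where $F(x, 0, \tau)$ is singular and any formula for $A_{m, 0}$ must be interpreted via regularization.
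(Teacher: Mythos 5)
Your overall route is the same as the paper's: establish the twisted modularity $A_{m,\alpha}(\gamma\tau) = (c\tau+d)^{m+2}A_{m,\alpha\gamma}(\tau)$ from \eqref{eqn:ginv} and then match Fourier expansions using \eqref{eqn:Ffourier} and \eqref{eqn:eisfourier}; your correction of the misprint $g_\alpha \leadsto g_{\alpha\gamma}$ in \eqref{eqn:ginv}, your constant-term computation, and your (correct) observation that equality of constant terms at the cusps does not by itself rule out a cusp-form discrepancy are all sound, and your fallback to a full Fourier comparison is exactly what the paper does.

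One step as written fails in a boundary case, and it is not the one you flagged. Setting $z=0$ in \eqref{eqn:ginv} kills the term $cz(c\tau+d)e^{czx}h_{\alpha\gamma}((c\tau+d)x,z|\tau)$ only when $h_{\alpha\gamma}(\cdot,z|\tau)$ is regular at $z=0$, i.e.\ when $\alpha\ne 0$. For $\alpha=0$ one has $h_0(x,z|\tau) = \tfrac1z + O(1)$ by \eqref{eqn:Fres}, so that term contributes $c(c\tau+d)$ in the limit and one gets
$g_0(x,0|\gamma\tau) = c(c\tau+d) + (c\tau+d)^2 g_0((c\tau+d)x,0|\tau)$.
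Hence \eqref{eqn:Amodular} fails precisely for $(m,\alpha)=(0,0)$ — this is the classical quasi-modularity of $G_2$, and reassuringly the anomalous term $c(c\tau+d)$ matches $-\tfrac{1}{2\pi i}\bigl(-2\pi i c(c\tau+d)\bigr)$ on the Eisenstein side, so the identity still holds there; but since your argument leans on modularity to propagate cusp data and to reduce to $\tau=i\infty$, you need to treat $(0,0)$ separately (the singularity of $F(x,0,\tau)$ itself is harmless for $g_0$, since the polar part in $z$ is independent of $x$ and is annihilated by $\partial/\partial x$ — that is not where the subtlety lives). Finally, note that the Kronecker--Eisenstein identity you propose is, after expanding $(k\tau+\ell-x/(2\pi i))^{-2}$ in powers of $x$, \emph{literally} the statement of the proposition, so it repackages rather than sidesteps the remaining work; it is the right classical lens, but proving it costs the same Fourier or Liouville-type argument.
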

\begin{proof}
One can use \eqref{eqn:Ffourier} and \eqref{eqn:eisfourier} to show the functions above have identical Fourier expansions when $x_\alpha = 0$ and $y_\alpha = 0$ or 1. The result follows from the modularity properties of $A_{m,\alpha}(\tau)$ and $G_{m+2,\alpha}(\tau)$ in \eqref{eqn:Amodular} and \eqref{eqn:eismodular}, respectively.
\end{proof}
\begin{cor}
The restriction of the level $N$ KZB connection form to $Y_{\G(N),\vec{1}}$ is
$$
\Omega_N' = \left(2\pi i \bY \frac{\partial}{\partial \bX} - \frac{1}{2} \sum_{\substack{m \ge 2 \\ \alpha \in (N^{-1}\Z/\Z)^2}} \frac{(m - 1)}{(2\pi i)^{m-1}} G_{m,\alpha}(\tau) \e_{m,\alpha} \right)\, d\tau + \bt_0 \, \frac{dz}{z}.
$$
\end{cor}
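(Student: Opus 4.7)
The plan is to split $\Omega_N$ into its $d\tau$ and $dz$ components, restrict each according to the concrete prescription $G\,d\tau + H\,\frac{dz}{z} \mapsto G(0,\tau)\,d\tau + H(0,\tau)\,\frac{dz}{z}$ recalled above, and then repackage using Proposition \ref{prop:coeff} together with the definition \eqref{eqn:ep} of $\e_{m+2,\alpha}$.

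First I would analyze the singular behavior at $z=0$. The Fourier expansion \eqref{eqn:Ffourier} shows $F(x,z,\tau) = \pi i \coth(x/2) + \pi i \coth(\pi i z) + (\text{holomorphic in both } x \text{ and } z)$, so the only source of a pole at $z=0$ in
$$
h_\alpha(x,z|\tau) = e^{-y_\alpha x} F(x,z - \tilde\alpha,\tau) - \frac{2\pi i}{x}
$$
is the $\alpha = 0$ summand, and this pole sits entirely in the $x$-independent part with residue $1$. Consequently $h_0(\bX,z|\tau)\cdot\bt_0$ contributes residue $\bt_0$ along $z=0$, every other $h_\alpha(\bX,z|\tau)\cdot\bt_\alpha$ is regular there, and the $dz$ component restricts to exactly $\bt_0\,\frac{dz}{z}$. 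Since $g_\alpha = \partial_x h_\alpha$ kills the $x$-independent pole, each $g_\alpha(\bX,z|\tau)$ is regular at $z=0$, and by definition of $A_{m,\alpha}$,
$$
\sum_\alpha g_\alpha(\bX,0|\tau)\cdot\bt_\alpha = \sum_{m,\alpha} A_{m,\alpha}(\tau)\,\ad_\bX^m(\bt_\alpha).
$$
The $d\tau$ part restricted to $z = 0$ is therefore
$$
2\pi i\bY\frac{\partial}{\partial \bX} + \tfrac{1}{2}\sum_{m,\alpha} A_{m,\alpha}(\tau)\,\delta_{m,\alpha} + \sum_{m,\alpha} A_{m,\alpha}(\tau)\,\ad_\bX^m(\bt_\alpha).
$$

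To reach the advertised form I would substitute $\delta_{m,\alpha} = \e_{m+2,\alpha} - \ad_\bX^m(\bt_\alpha + (-1)^m \bt_{-\alpha})$ from \eqref{eqn:ep}, relabel $\alpha \mapsto -\alpha$ in the resulting $\ad_\bX^m(\bt_{-\alpha})$ sum, and invoke the symmetry $A_{m,-\alpha} = (-1)^m A_{m,\alpha}$, which follows by applying \eqref{eqn:Amodular} to $\gamma = -I$. The upshot is that every $\ad_\bX^m \bt_\alpha$ contribution cancels, leaving only $\tfrac{1}{2}\sum_{m,\alpha} A_{m,\alpha}(\tau)\,\e_{m+2,\alpha}$. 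Substituting Proposition \ref{prop:coeff} and reindexing $m \mapsto m-2$ then produces the claimed formula. The main bookkeeping hurdle is this exact cancellation between $\psi$ and $\nu_1\big|_{z=0}$, which is precisely what the adjustment in the definition \eqref{eqn:ep} of $\e_{m+2,\alpha}$ was engineered to accomplish; the surviving derivations are the special ones of Remark \ref{rem:esp}, which annihilate $\bt_0$, consistent with the geometric expectation that along the identity section the residue should commute with the coefficient of $\frac{dz}{z}$.
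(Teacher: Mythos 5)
Your proposal is correct and carries out exactly the computation the paper leaves implicit after Proposition \ref{prop:coeff}: restrict the $dz$ component via the residue of $h_\alpha$ at $z = 0$ (only $\alpha = 0$ contributes, with residue $1$), restrict the $d\tau$ component via $g_\alpha(\bX,0|\tau) = \sum_m A_{m,\alpha}\bX^m$, then trade $\delta_{m,\alpha}$ for $\e_{m+2,\alpha}$ using \eqref{eqn:ep} and the parity $A_{m,-\alpha} = (-1)^m A_{m,\alpha}$ so the $\ad_\bX^m \bt_\alpha$ terms cancel, and finally substitute Proposition \ref{prop:coeff} and reindex $m \mapsto m - 2$. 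No gaps; all the small checks (parity of $A_{m,\alpha}$ from \eqref{eqn:Amodular} with $\gamma = -I$, regularity of $g_\alpha$ at $z=0$ since the $z$-pole of $h_0$ is $x$-independent) are verified correctly.
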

In particular, the residue of $\Omega_N'$ along the zero section is $L_{z = 0} = \bt_0$.\footnote{The standard notation for nilpotent residues is $N$. Since $N$ already denotes level, we use $L$ instead.} Similarly, along any lift of a torsion section $z = \tilde{\alpha}$, the residue of $\Omega_N'$ is 
$$
L_{z,\tilde{\alpha}} := \Res_{z = \tilde{\alpha}} \Omega_N' = e^{-x_\alpha\bX} \cdot \bt_\alpha.
$$

\subsection{Restriction to singular fibers}
\label{sec:res}

Let $q_N = e^{2\pi i \tau/N}$. Then $2 \pi i \, d\tau = N \, \frac{dq_N}{q_N}$. Recall from \S\ref{sec:Ecompact} that the singular fiber of $\overline{\E}_N$ over $q_N = 0$ is an $N$-gon of $\P^1$'s. The restriction of $\Omega_N$ to a first order neighborhood of identity component\footnote{The restriction to the other components can be similarly computed after pulling the connection back along the translation $z \mapsto z - \tilde{\alpha}$, where $\tilde{\alpha}$ is the lift of an torsion section of $\E_N$ to $\h \times \C$. } of this fiber is given by
$$
G(z,\tau) \frac{dq_N}{q_N} + H(z,\tau) \, dz \mapsto \left(G(z,\tau)\big|_{q_N = 0}\right)\, \frac{dq_N}{q_N} + \left(H(z,\tau)\big|_{q_N = 0}\right) \, dz.
$$
Observe that
$$
A_{m,\alpha}(\tau) \big|_{q_N = 0} = -\frac{(m+1)}{(2\pi i)^{m+1}} G_{m + 2, \alpha}(\tau) \big|_{q_N = 0} = \frac{2\pi i (-1)^m B_{m+2}([x_\alpha])}{m!(m+2)}
$$
and
\begin{align*}
    g_\alpha(\bX,z|\tau) \big|_{q_N = 0} &= \frac{\partial}{\partial \bX} \left(\pi i e^{-\alpha\tau\bX}\left( \coth(\bX/2) + \coth(\pi i(z - \tilde{\alpha}))\right)- \frac{2\pi i}{\bX}\right) \cr
    &= \left(\pi i e^{-x_\alpha\bX} \frac{\partial}{\partial \bX} \coth(\bX/2)\right) \cr
    &\qquad\qquad \qquad - x_\alpha \pi i e^{-x_\alpha\bX}\left( \coth(\bX/2) + \coth(\pi i(z - \tilde{\alpha}))\right) + \frac{2\pi i }{\bX^2} \cr
    &= \sum_{m = 0}^\infty \frac{2\pi i (-1)^m B_{m+2}([x_\alpha])}{m!(m+2)}\bX^m.
\end{align*}
Let $w = e^{2\pi i z}$ and $\zeta = e^{2\pi i/N}$. Then 
\begin{align*}
    h_\alpha(\bX,z|\tau) &= \pi i e^{-\alpha\tau\bX}\left( \coth(\bX/2) + \coth(\pi i(z - \tilde{\alpha}))\right)- \frac{2\pi i}{\bX} \cr
    &= \pi ie^{-x_\alpha\bX} \left(\frac{e^\bX + 1}{e^\bX - 1} + \frac{wq_N^{-x_\alpha}\zeta^{-y_\alpha} + 1}{wq_N^{-x_\alpha}\zeta^{-y_\alpha} -1}\right) - \frac{2\pi i}{\bX} \cr
    &= 2\pi i e^{-x_\alpha\bX}\left(\frac{1}{e^\bX - 1} + \frac{wq_N^{-x_\alpha}\zeta^{-y_\alpha}}{wq_N^{-x_\alpha}\zeta^{-y_\alpha} -1}\right)- \frac{2\pi i}{\bX}.
\end{align*}
Thus,
$$
h_\alpha(\bX,z|\tau)\big|_{q_N = 0} = \begin{cases}
2\pi i \left(\frac{1}{e^\bX - 1} + \frac{w}{w -
\zeta^{y_\alpha}}\right)- \frac{2\pi i}{\bX} & x_\alpha = 0 \cr
2\pi i e^{-x_\alpha\bX}\left(\frac{1}{e^\bX - 1} + 1 \right)- \frac{2\pi i}{\bX} & x_\alpha \neq 0.
\end{cases}
$$
Summing these gives the restriction to the identity component of the singular fiber
\begin{multline*}
    \Omega_N' = N\left(\bY \frac{\partial }{\partial\bX} + \frac{1}{2}\sum_{\substack{\alpha \in (N^{-1}\Z/\Z)^2 \\ m \ge 0}} \frac{(-1)^m B_{m+2}([x_\alpha])}{m!(m+2)} \e_{m+2,\alpha}\right) \frac{dq_N}{q_N} \cr
    + \sum_\alpha \frac{e^{-x_\alpha\bX}}{e^\bX - 1} \cdot \bt_\alpha \, \frac{dw}{w} + \sum_{x_\alpha \in \Z} \bt_\alpha \, \frac{dw}{w - e^{2\pi iy_\alpha}} + \sum_{x_\alpha \notin \Z} e^{-x_\alpha \bX} \cdot \bt_\alpha \, \frac{dw}{w}.
\end{multline*}
\begin{theorem}
\label{thm:KZ}
The pullback of the KZB connection along the inclusion of $\P^1 - \{0,\bmu_N,\infty\}$ into the identity component of the singular fiber of $\overline{E}_N$ above $q_N = 0$ is the cyclotomic KZ connection \eqref{eqn:KZ} after the change of variables mapping
$$
\ee_0 \longmapsto \displaystyle \sum_\alpha \frac{e^{-x_\alpha\bX}}{e^\bX - 1} \cdot \bt_\alpha +  \sum_{x_\alpha \notin \Z} e^{-x_\alpha \bX} \cdot \bt_\alpha
$$
and if $\zeta = e^{2\pi i k/N}$, then $\ee_\zeta \longmapsto \bt_\alpha$ where $x_\alpha \equiv 0$ and $y_\alpha \equiv k/N$. This change of variables is a full level $N$ generalization of Hain's map \cite[\S 18]{hain:kzb}.
\end{theorem}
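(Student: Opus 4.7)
The plan is to match the two connection forms coefficient-by-coefficient after applying the claimed substitution. The restriction of $\Omega_N$ to a first-order neighborhood of the identity component of the singular fiber at $q_N = 0$ was already written out explicitly just above the theorem statement. Pulling this restriction back further along the inclusion of $\P^1 - \{0,\bmu_N,\infty\}$ into this component amounts to setting $q_N = 0$, which eliminates the $\frac{dq_N}{q_N}$ term and leaves only the three $dw$-summands.

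After this pullback, the coefficient of $\frac{dw}{w}$ is $\sum_\alpha \frac{e^{-y_\alpha \bX}}{e^\bX - 1}\cdot \bt_\alpha + \sum_{y_\alpha \notin \Z} e^{-y_\alpha \bX}\cdot \bt_\alpha$, which is by construction the image of $\ee_0$ under the proposed substitution. For each root of unity $\zeta = e^{2\pi i k/N} \in \bmu_N$, the 1-form $\frac{dw}{w - \zeta}$ appears only in the middle sum (indexed by the $\alpha$ with $y_\alpha = 0$ and $x_\alpha \equiv k/N$), with coefficient $\bt_\alpha$, and this is the image of $\ee_\zeta$. After matching with the normalization in \eqref{eqn:KZ}, the KZ 1-form therefore maps to the pulled-back $\Omega_N$ term by term, which is the content of the theorem.

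The one point that requires care is interpreting the image of $\ee_0$. The Laurent series $\frac{e^{-y_\alpha \bX}}{e^\bX - 1}$ has a simple pole at $\bX = 0$ with residue $1$, so $\frac{e^{-y_\alpha \bX}}{e^\bX - 1} \cdot \bt_\alpha$ is not individually a well-defined element of $\p_N$. However, summing over all $\alpha \in (N^{-1}\Z/\Z)^2$ and using the defining relation $\sum_\alpha \bt_\alpha = [\bX, \bY]$ from \eqref{eqn:pN}, the singular contributions $\frac{1}{\bX}\cdot \bt_\alpha$ assemble into $\frac{1}{\bX} \cdot [\bX, \bY] = \bY$, and what remains involves only nonnegative powers of $\ad_\bX$ acting on the $\bt_\alpha$. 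The resulting expression is therefore a genuine element of $\p_N$, and by freeness of $\bL(\ee_0, \ee_\zeta \mid \zeta \in \bmu_N)^\wedge$ the substitution extends uniquely to a continuous Lie algebra homomorphism, recovering Hain's regularization in the $N = 1$ case.
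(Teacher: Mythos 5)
Your proposal is correct and follows the same route as the paper: the theorem is proved by the explicit computation of $\Omega_N'$ on a first-order neighborhood of the identity component carried out in \S\ref{sec:res}, followed by discarding the $\frac{dq_N}{q_N}$ term and matching coefficients of $\frac{dw}{w}$ and $\frac{dw}{w-\zeta}$ against \eqref{eqn:KZ}. Your remark that the individually singular terms $\frac{1}{\bX}\cdot\bt_\alpha$ assemble via $\sum_\alpha\bt_\alpha=[\bX,\bY]$ into $\bY$ (absorbing the $2\pi i\bY\,dz$ term of $\nu_2$) is exactly the regularization the paper uses implicitly when it writes $h_\alpha = e^{-y_\alpha\bX}F - \frac{2\pi i}{\bX}$, so this is a welcome clarification rather than a deviation.
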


Meanwhile, the $dq_N$ component of the linearized connection $\Omega_N'$ is constant with respect to $w$. Thus, the residue of $\Omega_N$ along the fiber above $q_N = 0$ is the derivation
$$
L_{q_N=0} := \Res_{q_N = 0}\Omega_N = N\left(\bY \frac{\partial }{\partial\bX} + \frac{1}{2}\sum_{\substack{\alpha \in (N^{-1}\Z/\Z)^2 \\ m \ge 0}} \frac{(-1)^m B_{m+2}([x_\alpha])}{m!(m+2)} \e_{m+2,\alpha}\right).
$$
One can use modularity properties \eqref{eqn:hinv} and \eqref{eqn:ginv} to compute the restriction and residues along the identity components of the other singular fibers. If $P$ is a cusp such that $\gamma \in SL_2(\Z)$ maps $q_N = 0$ to $P$, then the residue along the fiber above $P$ is 
$$
L_{q_N,P} = N\left(\bY \frac{\partial }{\partial\bX} + \frac{1}{2}\sum_{\substack{\alpha \in (N^{-1}\Z/\Z)^2 \\ m \ge 0}} \frac{(-1)^m B_{m+2}([x_\alpha])}{m!(m+2)} \e_{m+2,\alpha\gamma^{-1}}\right).
$$

\section{Admissible variation of MHS}
\label{sec:avmhsP}

We now prove the main result. We assume familiarity with mixed Hodge structures and begin with a review of variations of MHS. The reader can find relevant background in \cite{SZ,kashiwara}.

\subsection{Review of variations of MHS}

Suppose $X$ is a smooth projective variety over $\C$ and $D$ is a divisor with normal crossings in $X$. Let $Y = X - D$. Relevant examples are: 
\begin{itemize}
    \item where $Y$ is a modular curve, $X$ is its natural compactification, and $D$ is the set of cusps;
    \item where $Y$ is the universal elliptic curve $\E_\G'$, $X$ is the compactification $\overline{\E}_\G$, and $D$ is the union of $\E_\G[N]$ and singular fibers of $\overline{\E}_\G$ over the cusps of $Y_\G$.
\end{itemize}

Let $\V$ be a $\Q$-local system of finite rank over $Y$ with unipotent local monodromy at every smooth point of $D$. Let $\cV = \V \otimes_\Q \cO_X$ be the associated flat vector bundle. Denote Deligne's canonical extension of $\cV$ to $X$ by $\overline{\cV}$. Then $\overline{\cV}$ has natural connection 
$$
\nabla : \overline{\cV} \to \overline{\cV} \otimes \Omega_X^1(\log D)
$$
with logarithmic singularities along $D$. Since the local monodromy operators are unipotent, the residues of $\nabla$ at each smooth point of $D$ are nilpotent.

\begin{definition}
A {\em variation of MHS} $\V$ over $Y$ consists of a local system $\V_\Q$ over $Y$ of finite dimensional rational vector spaces with unipotent local monodromy, together with
\begin{enumerate}[(i)]
    \item a finite increasing filtration $W_\bullet$ of $\V_\Q$ by $\Q$-local systems 
    $$
    0 \subseteq W_a \V \subseteq \cdots \subseteq W_{r - 1}\V \subseteq W_r\V \subseteq \cdots \subseteq W_b\V = \V,
    $$
    and
    \item a finite decreasing filtration $F^\bullet$ of $\cV$ by holomorphic subbundles.
\end{enumerate}
These are required to statisfy:
\begin{enumerate}[(i)]
    \item {\em Griffiths' transversality}:
    $$
    \nabla(F^p\cV) \subseteq F^{p-1}\cV\otimes \Omega^1_Y = F^p(\cV \otimes \Omega^1_Y).
    $$
    \item The fiber $V_y$ above any point $y \in Y$ is a MHS with weight and Hodge filtrations cut out by $W_\bullet$ and $F^\bullet$, respectively.
\end{enumerate}
\end{definition}
\begin{definition}
\label{def:admissible}
We continue with the notation above. Suppose for the time being that $X$ is a curve. A variation of MHS $\V$ over $Y$ is {\em admissible} if the following additional conditions hold. 
\begin{enumerate}[(i)]
    \item The subbundles $F^p\cV$ extend to holomorphic subbundles of the canonical extension $\overline{\cV}$. 
    \item For $P \in D$, let $L_P = -\Res_P \nabla$ and $V_P$ be the fiber of $\overline{\cV}$ above $P$. There exists an increasing {\em relative weight filtration} $M_\bullet$ of $V_P$ such that 
    \begin{enumerate}[(a)]
        \item $L_P(M_rV_P) \subseteq M_{r - 2}V_P$ and $L_P(W_mV_P) \subseteq W_mV_P$ for all $m$ and $r$, and
        \item $L_P^r$ induces an isomorphism 
        \begin{equation}
            \label{eqn:Lr}
            L_P^r : \Gr^M_{m + r} \Gr^W_mV_P \to \Gr_{m - r}^M \Gr^W_mV_P
        \end{equation}
        for all $m$ and $r$. 
    \end{enumerate}
\end{enumerate}
When $\dim X > 1$, a variation $\V$ over $Y$ is admissible if the pullback of $\V$ to the normalization of every curve $C$ in $X$ is admissible.
\end{definition}

If $X$ is a curve and $\V$ is admissible, the fibers $V_P$ over $P \in D$ have canonical {\em limit MHS} for each choice of a nonzero tangent vector $\vv \in T_PX$. We typically denote this MHS by $V_{P,\vv}$ (or $V_\vv$ if the choice of point is clear). The $\Q$-structure of $V_{P,\vv}$ is determined by the elements 
\begin{equation}
    \label{eqn:Qstructure}
    \lim_{t \to 0} t^{-L_P} v(t) \in V_P, 
\end{equation}
where $t$ is the local holomorphic coordinate of $X$ centered at $P$ such that $\vv = \partial/\partial t$ and $v(t)$ is a local flat section of $\V_\Q$. The weight and Hodge filtrations of $V_{P,\vv}$ are $M_\bullet$ and the restriction of $F^\bullet$ to $V_P$, respectively. 

\begin{example}
The pullback $\H_\G$ of the local system $\H$ to $Y_\G$ is admissible. The limit MHS at $\partial/\partial q$ at $q = 0$ is isomorphic to $\Q(0) \oplus \Q(1)$ with $\Q$-Betti basis $\bX$ and $\aa$.
\end{example}

\subsection{Admissibility of $\bP_N$}

We now prove our second main result. Together with appropriate choices of Hodge, weight, and relative weight filtrations, the KZB connection underlies an admissible variation of MHS over $\E_N'$. 

The natural Hodge and weight filtrations on the Lie algebra $\p_N$ are as follows. 
\begin{align*}
    F^{-p} &= \{x \in \p_N \mid \deg_\bY(x) + \sum\deg_{\bt_\zeta}(x) \leq p\} \cr
    W_{-m} &= \{x \in \p_N \mid \deg_\bY(x) + \deg_\bX(x) + 2 \sum\deg_{\bt_\zeta}(x) \geq m\}
\end{align*}
There is also a relative weight filtration $M_\bullet$ along the singular fibers of $\E_N'$
$$
M_{-m} = \{x \in \p_N \mid 2\deg_\bY(x) + 2\sum\deg_{\bt_\zeta}(x) \geq m\}
$$
These filtrations are compatible with the filtrations on $\H$ and the Lie bracket. The $\Q$-structure of each fiber $\p_N$ of $\bP_N$ is determined by the image of the $\p(E_\tau',x)$ under the isomorphism $\psi_\tau : \p(E_\tau',x) \otimes_\Q \C \to \p_N$ in Proposition \ref{prop:fiber}.
\begin{lemma}
The Hodge, weight, and relative weight filtrations of $\p_N$ are well-defined filtrations of the vector bundle $\mathbfcal{P}_N \to \E_N'$.
\end{lemma}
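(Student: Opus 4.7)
The plan is to reduce the lemma to a preservation check: since $\bP_N$ is the quotient of the trivial bundle $\h \times \C \times \p_N$ by the action of $\G(N) \ltimes \Z^2$ through the factors of automorphy $\widetilde{M}_{\gamma,(m,n)}(\tau,z)$ of \S\ref{sec:PN}, a filtration of the fiber $\p_N$ descends to a filtration of $\bP_N$ exactly when it is preserved by every such factor. Each of $F^\bullet$, $W_\bullet$, $M_\bullet$ is a Lie algebra filtration generated by the degrees in $\bX$, $\bY$, and the $\bt_\alpha$, so it is enough to check preservation on these three kinds of generators.

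First I would treat the inner factors $\exp(cz\bX/(c\tau+d))$ and $\exp(-m\bX)$. Both act on $\p_N$ as $\exp(\lambda \ad \bX)$ for scalars $\lambda$. Since $\bX \in F^0 \cap W_{-1} \cap M_0$ and all three filtrations are compatible with the Lie bracket, $\ad \bX$ preserves $F^\bullet$, sends $W_{-m}$ into $W_{-m-1} \subset W_{-m}$, and preserves $M_\bullet$; the exponentials inherit these properties in the pronilpotent completion.

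Next I would handle $M_\gamma(\tau)$ for $\gamma \in \G(N)$. Because $\G(N)$ acts trivially on $N$-torsion, $M_\gamma(\bt_\alpha) = \bt_\alpha$, and \eqref{eqn:M} gives $M_\gamma(\bX) \in F^0 \cap W_{-1}$ and $M_\gamma(\bY) \in F^{-1} \cap W_{-1}$. Combined with bracket compatibility, this shows that $F^\bullet$ and $W_\bullet$ descend to global holomorphic filtrations of $\bP_N$ over $\E_N'$.

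The main obstacle is $M_\bullet$: because $\bX \in M_0$ but $\bY \in M_{-2}$, the summand $\frac{c}{2\pi i}\bX$ in $M_\gamma(\bY)$ shows that $M_{-2}$ is not preserved by $M_\gamma$ when $c \neq 0$. I would resolve this by recalling that $M_\bullet$ is the \emph{relative} weight filtration, relevant only in a neighborhood of the singular fibers of $\overline{\E}_N$; the stabilizer of each such fiber in $\G(N)$ is conjugate to the unipotent upper-triangular subgroup, on which $c = 0$ and $M_\gamma$ does preserve $M_\bullet$. Together with the $\Z^2$ action (which is already inner), this yields a well-defined filtration of the canonical extension of $\bP_N$ in a neighborhood of each boundary component, which is what the admissibility criterion requires.
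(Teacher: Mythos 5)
Your reduction to checking that each factor of automorphy preserves the filtrations is exactly the paper's strategy: its entire proof is the one-line assertion that $\widetilde{M}_{\gamma,(m,n)} \in F_0W_0M_0\End\p_N$, and your verifications for $F^\bullet$ and $W_\bullet$ (first the inner factors $\exp(\lambda\ad_\bX)$, then $M_\gamma$ on generators, using multiplicativity of the filtrations) simply make that assertion explicit. Where you genuinely diverge is on the relative weight filtration, and your extra care there is warranted rather than optional. Since $\bY \in M_{-2}$ while $\bX \in M_0\setminus M_{-1}$, the term $\frac{c}{2\pi i}\bX$ in \eqref{eqn:M} shows that $M_\gamma$ does \emph{not} preserve $M_\bullet$ when $c\neq 0$, so the paper's claim that the factors of automorphy lie in $M_0\End\p_N$ is not literally correct, and $M_{-m}\p_N$ does not cut out a global subbundle of $\bP_N$ over all of $\E_N'$. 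Your resolution is the right one: Definition \ref{def:admissible} only requires $M_\bullet$ on the fiber of the canonical extension over each boundary point, and near a cusp the identifications between nearby fibers are governed by the $\Z^2$-factors $\exp(-m\ad_\bX)$ together with the stabilizer of that cusp, which is conjugate to the unipotent subgroup with $c=0$; both preserve $M_\bullet$ because $\bX\in M_0$ and $M_\bullet$ is compatible with the bracket. This is also all that the proof of Theorem \ref{thm:avmhs} uses, since the conditions on $M_\bullet$ are checked there only against the residues $L_{q_N,P}$ and $L_{z,\tilde{\alpha}}$ along the boundary. In short: your argument is correct, follows the paper's skeleton for $F^\bullet$ and $W_\bullet$, and supplies a repair of the paper's too-quick treatment of $M_\bullet$ that the statement actually needs.
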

\begin{proof}
The factors of automorphy $\widetilde{M}_{\gamma,(m,n)}$ are in $F_0W_0M_0\End \p_N$.
\end{proof}

\begin{theorem}
\label{thm:avmhs}
Together with these filtrations, $\mathbfcal{P}_N^\topo \to \E_N'$ is a pro-object of the category of admissible variations of MHS. The $W_\bullet$-graded quotients of $\mathbfcal{P}_N^\topo$ are direct sums of Tate twists of symmetric powers of $\H_{\G(N)}$, and the relative weight filtration of the limit MHS at $\partial/\partial q_N + \partial/\partial w$ anchored at the point $(q_N = 0, w = 1)$ is $M_\bullet$.
\end{theorem}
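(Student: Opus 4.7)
The plan is to verify the axioms of Definition~\ref{def:admissible} for the pro-bundle $\mathbfcal{P}_N^\topo$, working level by level with respect to the lower central series of $\p_N$. Via Theorem~\ref{bigthm:bundles}, one may equivalently work with $\mathbfcal{P}_N$ equipped with the $\Q$-structure coming from $\psi_\tau$. Well-definedness of the three filtrations on the bundle was established in the lemma above, and unipotence of the local monodromy at every smooth point of the boundary divisor $D := \E_N[N] \cup (\overline{\E}_N \setminus \E_N)$ follows from the fact that the residues computed in \S\ref{sec:res} are nilpotent derivations. What remain are (i) the fibers are mixed Hodge structures, (ii) Griffiths transversality, (iii) identification of $\Gr^W \mathbfcal{P}_N$, (iv) extension of $F^\bullet$ across $D$, (v) existence and Lefschetz property of the relative weight filtration, and (vi) identification of the limit MHS.

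Steps (i)--(iv) are essentially bookkeeping. The fiber axiom is precisely Proposition~\ref{bigprop:fiber}. For Griffiths transversality, inspect $\Omega_N = 2\pi i\bY\,\partial/\partial\bX\,d\tau + \psi + \nu_1 + \nu_2$: the operator $\bY\,\partial/\partial\bX$ lies in $F^{-1}\Der\,\p_N$ since $\bY \in F^{-1}$ and $\partial/\partial\bX$ preserves $F^0$; each $\delta_{m,\alpha}$ sends $\bY$ and each $\bt_\beta$ into iterated brackets of $\bt$'s which lie in $F^{-2}$; and $\nu_1,\nu_2$ are built from $\ad\,\bt_\alpha$ with $\bt_\alpha \in F^{-1}$. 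For (iii), the defining relation $\sum_\alpha \bt_\alpha = [\bX,\bY]$ presents $H_1(\p_N)$ as an extension $0 \to \Q(1)^{N^2-1} \to H_1(\p_N) \to \H_{\G(N)} \to 0$, and PBW realises $\Gr^W\p_N$ as a sum of Schur functors in $H_1(\p_N)$, which decompose into Tate twists of $\Sym^n \H_{\G(N)}$ using $\Lambda^2\H = \Q(1)$. For (iv), the factors of automorphy~\eqref{eqn:automorphy} preserve $F^\bullet$ strictly, so the Hodge subbundles extend tautologically to Deligne's canonical extension.

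The main obstacle is (v), the verification of the relative weight filtration at each boundary component. At a smooth point of a torsion section, the residue $L_{z,\tilde\alpha} = e^{-y_\alpha\bX}\cdot \bt_\alpha$ is the adjoint action by an element of $W_{-2}\p_N$; it strictly lowers $W_\bullet$ and therefore acts as zero on every $\Gr^W$-piece, so the relative weight filtration there equals $W_\bullet$ itself and~\eqref{eqn:Lr} holds trivially. At a cusp $P$ of $\overline{\E}_N$, the residue $L_{q_N,P}$ from \S\ref{sec:res} equals $N\bY\,\partial/\partial\bX$ plus a sum of $\e_{m,\alpha\gamma^{-1}}$-derivations; the latter have strict $W$-degree $\leq -2$ and act trivially on $\Gr^W \p_N$. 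Thus on each $\Gr^W$-summand, $L_{q_N,P}$ reduces to $N\bY\,\partial/\partial\bX$ acting on a Tate twist of $\Sym^n \H_{\G(N)}$, where it is the classical Lefschetz operator. The filtration $M_\bullet$ stated in the theorem is precisely the weight filtration of this $\sl_2$-action on each symmetric power, so~\eqref{eqn:Lr} reduces to Hard Lefschetz for symmetric powers of a two-dimensional pure Hodge structure of weight~$1$.

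Finally, for (vi), combine Theorem~\ref{thm:KZ} with the $\Q$-structure formula~\eqref{eqn:Qstructure}. Near the point $(q_N,w)=(0,1)$ the KZB connection degenerates to the cyclotomic KZ connection on $\P^1 - \{0,\bmu_N,\infty\}$, so the limit flat sections of $\mathbfcal{P}_N^\topo$ match those of KZ, and the resulting $\Q$-structure on the limit fiber coincides with that of the comparison isomorphism~\eqref{eqn:kzcomp} under the change of variables of Theorem~\ref{thm:KZ}. The weight filtration of this limit MHS is then $M_\bullet$ by the construction in~(v).
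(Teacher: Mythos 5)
Your overall strategy --- verify the conditions of Definition~\ref{def:admissible} by inspecting the connection form, identify $\Gr^W$ as a sum of $S^{n}\H(k)$'s via the weight grading on $\p_N$, handle the relative weight filtration by splitting the residues into a $\Gr^W_0$-piece $N\bY\,\partial/\partial\bX$ plus a $W_{-2}$ remainder, and invoke $\sl_2$ representation theory --- is the same as the paper's.

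There is, however, one genuine circularity in step (i). You cite Proposition~\ref{bigprop:fiber} (that the MHS on $\p(E_\tau',x)$ induced by $\psi_\tau$ equals the canonical one) to verify the fiber axiom, but in the paper that proposition is \emph{derived} as a corollary of Theorem~\ref{thm:avmhs}: its proof uses admissibility of the restriction of $\bP_N$ to a single fiber, the Hain--Zucker result for unipotent variations, and strictness of MHS morphisms. You cannot appeal to it here. The repair is cheap and is implicit in the paper's proof: once you know $\Gr^W_{-m}\bP_N^\topo$ decomposes into pieces of the form $S^{m-2k}\H(k)$, the Hodge filtration induces a pure Hodge structure of the correct weight on each $W$-graded piece, so each fiber is a MHS without ever needing to know which one it is.

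Two smaller remarks. First, your step (vi) about matching $\Q$-structures of the limit MHS with the KZ comparison~\eqref{eqn:kzcomp} is not part of the statement of Theorem~\ref{thm:avmhs}; the theorem only asserts that $M_\bullet$ is the relative weight filtration, which your step (v) already establishes, and the stronger statement about the limit MHS being the canonical one on the nodal curve is a further corollary treated after the theorem. Second, $\H = H_1(E)$ has weight $-1$, not $1$, though this does not affect the Lefschetz argument.
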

\begin{proof}
We observed in the previous section that $\nabla_{\KZB_N}$ has residues 
$$
L_{z,\tilde{\alpha}} = e^{-x_\alpha\bX}\cdot \bt_\alpha
$$
and 
$$
L_{q_N,P} = N\left(\bY \frac{\partial }{\partial\bX} + \frac12 \sum_{\substack{\alpha \in (N^{-1}\Z/\Z)^2 \\ m \ge 0}} \frac{(-1)^m B_{m+2}([x_\alpha])}{m!(m+2)} \e_{m+2,\alpha\gamma^{-1}}\right),
$$
where $\gamma \in SL_2(\Z)$ such that $\gamma P$ is the cusp under $q_N = 0$. Thus, we have 
$$
L_{z,\tilde{\alpha}} \in F^{-1}M_{-2}W_{-2} \Der \p_N.
$$
and
$$
L_{q,P} \in F^{-1}M_{-2}W_0 \Der \p_N.
$$
Therefore, $\nabla_{\KZB_N}$ satisfies Griffiths transversality. Meanwhile, $L_{q,P}$ and $L_{q,P} + L_{w,\tilde{\alpha}}$ satisfies part (ii)(a) of Definition \ref{def:admissible}. 

The graded quotient $\Gr_{-m}^W \bP_N$ is a direct sum of variations of the form 
$$
S^{m-2k}\cH_{\G(N)} \cdot f_k,
$$
where $\cH_{\G(N)} := \H_{\G(N)} \otimes \cO_{Y(N)}$ and $f_k$ is a Lie word in the terms $\bt_\alpha$ of length $k$. Each $\bt_\alpha$ cuts out a constant section of $\bP_N$ (trivial under the monodromy action of $\G(N) \ltimes \Z^2$). Thus each Lie word in the $\bt_\alpha$ of length $k$ spans a constant local system with fiber $\Q(k)$. Thus, $\Gr_{-m}^W \bP_N^\topo$ is a direct sum of variations of the form $S^{m-2k}\H(k)$. We then observe $\Gr^W_0 L_{q,P} = N\bY\partial/\partial\bX$. It follows from the representation theory of $\sl_2$ that $L_{q,P}^r$ induces an isomorphism
$$
L_{q,P}^r : \Gr_{-m+r}^M\Gr_{-m}^W \p_N \to \Gr_{-m-r}^M\Gr^W_{-m} \p_N.
$$
Since $L_{w,\tilde{\alpha}} \in W_{-2}\Der \p_N$, the sum $(L_{q,P} + L_{w,\tilde{\alpha}})^r$ induces the same isomorphism. Thus, relative weight filtration $M_\bullet$ satisfies \eqref{eqn:Lr} at every point along the singular fibers of $\E_N$.

At points along the $N$-torsion sections of $\E_N$, the residues $L_{z,\tilde{\alpha}}$ are contained in $W_{-2}\Der \p_N$. Thus, the relative weight filtration is simply equal to $W_\bullet$ and \eqref{eqn:Lr} is satisfied trivially. 
\end{proof}

\begin{cor}
The MHS on the fiber of $\bP_N$ over $(E',x)$ is the canonical MHS on $\p(E',x)$. 
\end{cor}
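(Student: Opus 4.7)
The plan is to deduce this directly from the admissibility established in Theorem \ref{thm:avmhs}. By \cite{hain:dht}, the canonical MHS on $\p(E_\tau', x)$ arises as the fiber at $(E_\tau, x)$ of a canonical admissible variation of MHS over $\E_N'$ whose underlying local system is $\bP_N^\topo$, with weights determined by placing $H_1(E,\Q)$ in weight $-1$ and each loop around a torsion puncture in weight $-2$. Together with Theorem \ref{bigthm:bundles}, Theorem \ref{thm:avmhs} furnishes a second admissible VMHS structure on the same local system $\bP_N^\topo$, whose fiber at $(E_\tau, x)$ is by construction the MHS pulled back from $\p_N$ along $\psi_\tau$. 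The corollary amounts to identifying these two admissible VMHS structures on $\bP_N^\topo$.

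The identification proceeds by matching the two structures on generators. The weight filtration agrees: the proof of Theorem \ref{thm:avmhs} identifies $\Gr^W_{-m} \bP_N^\topo$ with direct sums of Tate twists of symmetric powers of $\H_{\G(N)}$, so that $\bX, \bY$ span a copy of $\H_{\G(N)}$ in weight $-1$ and each $\bt_\alpha$ generates a copy of $\Q(1)$ in weight $-2$, coinciding with the canonical weights on $H_1$ and on the loops around torsion points. The Hodge filtration agrees because the assignments $\bX \in F^0$ and $\bY, \bt_\alpha \in F^{-1}$ match the canonical Hodge types on $H_1(E)$ and on the $\bt_\alpha$ as classes of Hodge type $(-1,-1)$. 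Since $F^\bullet$ and $W_\bullet$ are determined on $\p_N$ by their values on the generators (both filtrations being compatible with the Lie bracket) and the $\Q$-structures coincide via $\psi_\tau$, the identity on $\bP_N^\topo$ is simultaneously a morphism of both admissible VMHS structures, hence an isomorphism of VMHS.

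The main subtlety is verifying that the canonical Hodge filtration from \cite{hain:dht} is really determined on generators in the way described, in particular that the loops $\bt_\alpha$ around the torsion punctures are canonically of type $(-1,-1)$. This reduces to the standard fact that, under the residue pairing, such loops correspond to classes of meromorphic differentials with simple poles at the punctures, which are of Hodge type $(-1,-1)$ in Deligne's MHS on $H^1(E_\tau - E_\tau[N])$. Once this compatibility is in hand, the argument above goes through and gives the corollary.
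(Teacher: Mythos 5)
Your overall strategy --- exhibit two admissible VMHS structures on $\bP_N^\topo$ and identify them --- is reasonable in outline, but the identification step has a genuine gap, located exactly where you flag ``the main subtlety.'' The problem is the claim that the canonical Hodge filtration of \cite{hain:dht} is ``determined on generators.'' The canonical MHS on $\p(E',x)$ is a filtration $F^\bullet$ of $\p(E',x)\otimes\C$ positioned relative to the Betti $\Q$-structure, and that relative position is precisely the period data; it is not determined by the Hodge types of the weight-graded quotients. Your reduction to the residue pairing only pins down the MHS on the abelianization $H_1(E';\Q)=\p/[\p,\p]$. Two MHS on a free pronilpotent Lie algebra can induce the same MHS on $H_1$ and on every $\Gr^W$ piece and still differ, because the extension data differ --- this is exactly where iterated integrals of Eisenstein series and elliptic polylogarithms live. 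Equivalently: the assertion that $\psi_\tau$ carries the canonical $F^\bullet$ to the filtration defined by placing $\bX\in F^0$ and $\bY,\bt_\alpha\in F^{-1}$ is not a formal consequence of matching generators; it \emph{is} the corollary. (The weight filtration comparison is less problematic, since $W_\bullet$ on $\p(E',x)$ is topologically determined over $\Q$, but the Hodge filtration is not.)

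The paper closes this gap with a rigidity argument that avoids any direct comparison of Hodge filtrations. Restrict $\bP_N$ to the single fiber $E'$: this is a unipotent admissible variation over $E'$, so by the Hain--Zucker classification its monodromy $\theta:\p(E',x)^\can\to\Der\p(E',x)^\KZB$ is a morphism of MHS; on the other hand $\theta$ is the adjoint action, hence also a morphism of MHS out of $\p(E',x)^\KZB$. Since $\p(E',x)$ is free it has trivial center, so $\theta$ is injective, and strictness of morphisms of MHS forces the two MHS on the source to coincide. To salvage your approach you would need a rigidity input of comparable strength --- for instance, an identification of the two admissible variations at a single fiber or limit point, from which rigidity of admissible VMHS would propagate --- rather than a generator-by-generator match, which only constrains the associated graded.
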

\begin{proof}
The monodromy action $\pi_1(E',x) \to \Aut \piun(E',x)$ of $\bP_N^\topo$ is induced by conjugation. The induced representation on Lie algebras $\theta : \p(E',x) \to \Der \p(E',x)$ is given by the adjoint action. Denote by $\p(E',x)^\KZB$ the MHS on $\p(E',x)$ with the $\Q$-structure induced by Proposition \ref{prop:fiber} and Hodge and weight filtrations given at the beginning of this section. Denote $\p(E',x)$ with its canonical MHS by $\p(E',x)^\can$.

Since the restriction of $\bP_N$ to the fiber $E'$ is unipotent and admissible, it follows from the main result of \cite{HZ} that
$$
\theta : \p(E',x)^\can \to \Der \p(E',x)^\KZB
$$
is a morphism of MHS. Meanwhile, 
$$
\theta : \p(E',x)^\KZB \to \Der \p(E',x)^\KZB
$$
is also a morphism of MHS since $\p(E',x)^\KZB$ is an object in the category of pro-MHS. Finally, since $\p(E',x)$ is free and thus has trivial center, the representation $\theta$ is injective. It follows from the strictness of morphisms of MHS that $\p(E',x)^\KZB$ and $\p(E',x)^\can$ must be the same. 
\end{proof}

Since $\bP_N$ is admissible, the result extends to the boundary of $\E_N'$. If $\ww = \lambda \partial/\partial q_N + \mu\partial/\partial w$ is a tangent vector anchored at $(q_N = 0,w=1)$, the limit MHS of $\bP_N$ at $\ww$ is the canonical MHS on $\Lie\piun(E_{\lambda\partial/\partial q_N},\mu\partial/\partial w)$, where $E_{\lambda\partial/\partial q_N}'$ is the first order smoothing of the singular fiber above $q_N = 0$ in the direction of $\lambda \partial/\partial q_N$ with its $N$-torsion removed and $\mu \partial/\partial w$ is a tangential base point anchored at the identity of $E_{\lambda\partial/\partial q_N}$.

\section{$\G_1(N)$ and other congruence subgroups}
\label{sec:other}

The above formulas and proofs can easily be adapted to any level $N$ congruence subgroup $\G$. One simply replaces $\E_N'$ by $\E_\G'$ and $\p_N$ by the Lie algebra
$$
\p_\G := \bL\left(\bX,\bY,\bt_\alpha \mid \alpha \in \E_\G[N]\right)^\wedge \Big/ \Big(\sum_\alpha \bt_\alpha = [\bX,\bY]\Big).
$$
This is equivalent to setting $\bt_\alpha$ to zero in the connection form for all $\alpha \notin \E_\G[N]$. The resulting connection is meromorphic on $\E_\G$ with at worst logarithmic singularities along $\E_\G[N]$ and the singular fibers.

\begin{example}
In the case of the full modular group $\G(1) = \SL_2(\Z)$, the only nontrivial $\bt_\alpha$ is $\bt_0$. Then the Lie algebra $\p_1$ is free on the two generators $\bX$ and $\bY$. The connection form $\Omega_1$ reduces to Hain's formula \cite[\S 9.2]{hain:kzb}. 
\end{example}

\begin{remark}
To recover the bilevel $(M,N)$ connection of \cite{CG}, one only needs to include the sections of $\E_{\lcm(M,N)}$ that are invariant under the action of the subgroup 
$$
\G(M,N) := \left\{\begin{pmatrix} a & b \cr c & d \end{pmatrix} \middle| a \equiv 0 \bmod M, d \equiv 0 \bmod N\right\} \subset \SL_2(\Z).
$$
\end{remark}

\subsection{KZB for $\G_1(N)$} 

The congruence subgroup $\G_1(N)$ is of particular interest in the study of mixed Tate motives over $\Z[\bmu_N,1/N]$. In this case, the KZB connection only includes $\bt_\alpha$ where $x_\alpha = 0$. These terms correspond via Proposition \ref{prop:fiber} to small loops around the real $N$-torsion of an elliptic curve $\C/\Lambda_\tau$. Since these torsion sections intersect the singular fiber above $\tau = i\infty$ at $N$th roots of unity, we will reindex the connection by $\zeta \in \bmu_N$ instead of $\alpha$. If $Ny_\alpha \equiv \ell \mod N$, then $\bt_\alpha = \bt_\zeta$ where $\zeta = e^{2\pi i\ell/N}$. The reason for this choice in notation becomes clear after pulling the connection back to $\P^1 - \{0,\bmu_N,\infty\}$ in \S\ref{sec:hain}. 

The $\G_1(N)$ KZB connection is then $\nabla_{\G_1(N)} = d + \Omega_{\G_1(N)}$, where 
\begin{multline*}
\Omega_{\G_1(N)} = \left(2\pi i\bY\frac{
\partial}{\partial \bX} + \frac{1}{2}\sum_{\substack{m \geq 0 \\ \zeta \in \bmu_N}}A_{m,\zeta}(\tau)\delta_{m,\zeta} + \sum_{\zeta \in \bmu_N} g_\zeta(\bX,z|\tau) \cdot \bt_\zeta\right)d\tau
\cr
+ \left(2\pi i\bY + \sum_{\zeta \in \bmu_N} h_\zeta(\bX,z|\tau) \cdot \bt_\zeta\right) \, dz.
\end{multline*}
The restriction of the $\G_1(N)$ connection to the zero section is
$$
\Omega_{\G_1(N)}' = 2\pi i\left(\bY\frac{\partial}{\partial \bX} - \frac{1}{2} \sum_{\substack{m\ge 2 \\ \zeta \in \bmu_N}} \frac{\zetabar + (-1)^m \zeta}{(m-2)!} \bG_{m,\zeta}(\tau)\e_{m,\zeta}\right)\, d\tau + \bt_1 \frac{dz}{z}.
$$
Letting $w = e^{2\pi i z}$, the resides along the torsion section $w = \zeta$ for $\zeta \in \bmu_N$ are simply 
$$
L_{w, \zeta} := \Res_{w = \zeta}\Omega_{\G_1(N)} = \bt_\zeta.
$$
Similarly, the restriction of the $\G_1(N)$ connection to a neighborhood of the singular fiber above $q_N = 0$ simplifies to
\begin{multline}
\label{eqn:KZBrest}
    \Omega_{\G_1(N)}' = \left(\bY \frac{\partial }{\partial\bX} + \frac12 \sum_{\substack{\zeta \in \bmu_N \\ m \ge 0}} \frac{B_{2m+2}}{(2m)!(2m+2)} \e_{2m+2,\zeta}\right) \frac{dq}{q} \cr
    + \frac{\bX}{e^\bX - 1} \cdot \bY \, \frac{dw}{w} + \sum_{\zeta \in \bmu_N} \bt_\zeta \, \frac{dw}{w - \zeta} .
\end{multline}
We replace $q_N$ with $q$ since the cusp under $q = q_N = 0$ has width 1 in $Y_1(N)$. The resulting residue along the nodal cubic singular fiber above $q = 0$ is 
\begin{equation}
    \label{eqn:res}
    L_{q=0} := \Res_{q = 0}\Omega_{\G_1(N)} = \bY \frac{\partial }{\partial\bX} + \sum_{\substack{\zeta \in \bmu_N \\ m \ge 0}} \frac{B_{2m+2}}{(2m)!(2m+2)} \e_{2m+2,\zeta}.
\end{equation}
\begin{remark}
Since the KZB connection is flat and the sections $w = \zeta$ intersect the singular fiber above $q = 0$ transversely, we expect the residues $L_{w, \zeta}$ and $L_{q, 0}$ to commute. This can be confirmed by a straightforward calculation of $[L_{q,0},L_{w,\zeta}] = L_{q,0}(\bt_\zeta) = 0$. 
\end{remark}

\subsection{Pullback to $\P^1 - \{0,\bmu_N,\infty\}$}
\label{sec:hain}

By Theorem \ref{thm:KZ}, the $\G_1(N)$ KZB connection degenerates to the cyclotomic KZ connection at the singular fiber above $q = 0$, but the change of variables is simpler than the full level $N$ case.

Pull back the KZB connection to $\P^1$ along the inclusion $U_N := \P^1 - \{0,\bmu_N,\infty\} \to E_{\partial/\partial q}$, the fiber of $\E_{\G_1(N)}$ above the tangent vector $\partial/\partial q$ at $q = 0$. This is equivalent to setting $\frac{dq}{q}$ to 0 in \eqref{eqn:KZBrest}, which yields
$$
\Omega_{U_N} = \frac{\bX}{e^\bX - 1} \cdot \bY \, \frac{dw}{w} + \sum_{\zeta \in \bmu_N} \bt_\zeta \, \frac{dw}{w - \zeta}
$$
Upon the change of variables
\begin{equation*}
    \left\{ 
    \begin{array}{lll}
        \ee_0 & \longmapsto & \frac{\bX}{e^{\bX} - 1} \cdot \bY \\
        \ee_\zeta & \longmapsto & \bt_\zeta,
    \end{array}
    \right.
\end{equation*}
this is exactly the cyclotomic KZ connection \eqref{eqn:KZ}. 

\appendix

\part*{Appendices}

\section{Invariance of the KZB connection}
\label{sec:invariance}

Here we prove Proposition \ref{prop:invariance}, the invariance of the level $N$ KZB connection under the action of $\SL_2(\Z) \ltimes \Z^2$. The calculation is similar to the proof of the level 1 case in \cite{LR}. 

\subsection{Review of Lie theory} 

First recall some elementary Lie theory. Let $V$ be a vector space and $\varphi,\phi \in \End V$. Then 
\begin{equation}
    \label{eqn:expdot}
    \exp(\ad \phi) (\varphi) = e^\phi \circ \varphi \circ e^{-\phi}. 
\end{equation}
Thus, in our notation, if $\delta \in \Der \p_N$ and $\phi \in \C\ll \bX,\bY,\bt_\alpha\rr$, then 
$$
\exp(\phi) \cdot \delta = e^\phi \circ \delta \circ e^{-\phi}.
$$
\begin{lemma}
If $f \in \C\ll \bX,\bY,\bt_\alpha\rr$ and $\delta$ is a continuous derivation of $\C\ll \bX,\bY,\bt_\alpha\rr$, then 
\begin{equation}
    \label{eqn:contder}
    e^{-f}\delta(e^f) = \frac{1 - \exp(-\ad_f)}{\ad_f}\delta(f).
\end{equation}
\end{lemma}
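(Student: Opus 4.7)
The plan is to introduce a one-parameter family and reduce the identity to a first-order ODE whose solution can be integrated term-by-term. Define
$$
g(t) := e^{-tf}\,\delta(e^{tf}) \in \C\ll \bX,\bY,\bt_\alpha\rr,
$$
and observe $g(0) = \delta(1) = 0$. The identity claimed in the lemma is then $g(1) = \frac{1 - \exp(-\ad_f)}{\ad_f}\,\delta(f)$.

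Next I would differentiate $g(t)$ in $t$. Since $\delta$ is a continuous derivation and $\frac{d}{dt}e^{tf} = f e^{tf} = e^{tf} f$, the Leibniz rule gives
$$
g'(t) = -f e^{-tf}\delta(e^{tf}) + e^{-tf}\delta(f\, e^{tf}) = -f\,g(t) + e^{-tf}\delta(f)\,e^{tf} + f\,g(t) = e^{-tf}\delta(f)\,e^{tf}.
$$
By \eqref{eqn:expdot}, conjugation in the enveloping algebra realizes the adjoint action, so
$$
g'(t) = \exp(-t\,\ad_f)\bigl(\delta(f)\bigr).
$$

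Finally, integrating from $0$ to $1$ and using $g(0)=0$, I obtain
$$
g(1) = \int_0^1 \exp(-t\,\ad_f)\bigl(\delta(f)\bigr)\,dt = \left(\sum_{n=0}^\infty \frac{(-\ad_f)^n}{(n+1)!}\right)\delta(f) = \frac{1 - \exp(-\ad_f)}{\ad_f}\,\delta(f),
$$
where the series is interpreted in the standard formal sense (each coefficient is a continuous operator on $\C\ll \bX,\bY,\bt_\alpha\rr$, and convergence holds in the $I$-adic topology since $f$ lies in the augmentation ideal after any rescaling, or more generally because $\ad_f$ is locally nilpotent modulo each $I^n$).

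The only point requiring care is the justification of the integration and series manipulations: one should verify that $g(t)$ is a well-defined element of $\C\ll \bX,\bY,\bt_\alpha\rr\hat\otimes \C[[t]]$ and that $g'(t) = e^{-tf}\delta(f)e^{tf}$ makes sense termwise modulo $I^n$ for every $n$. This is routine once one works level by level in the $I$-adic filtration, so I do not expect any genuine obstacle.
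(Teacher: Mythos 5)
Your argument is correct, and it is the standard derivation of this ``derivative of the exponential'' identity; the paper itself states the lemma without proof (it appears in the review subsection of the appendix as a known fact), so there is nothing to diverge from. The differentiation $g'(t)=e^{-tf}\delta(f)e^{tf}=\exp(-t\ad_f)(\delta(f))$ and the termwise integration are exactly right. One tiny point of hygiene: $f$ need not lie in the augmentation ideal $I$ a priori, but since a derivation kills scalars and the constant term of $f$ is central, both sides of \eqref{eqn:contder} are unchanged upon replacing $f$ by $f-\e(f)\in I$, after which all the $I$-adic convergence claims you invoke hold as stated.
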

\begin{lemma}    
\label{lem:lieexp}
If $\delta \in \Der \p_N$ and $m \in \C$, then 
\begin{equation}
    e^{-m\bX} \cdot \delta = \delta + \frac{1-e^{-m\bX}}{\bX}\delta(\bX).
\end{equation}
\end{lemma}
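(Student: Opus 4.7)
My plan is to extend $\delta$ to a continuous derivation of the completed universal enveloping algebra $\hat U(\p_N)$ and then reduce the statement to identity \eqref{eqn:contder} of the previous lemma by a short manipulation in that algebra. Specifically, for $y\in \p_N$, I will rewrite the left-hand side using \eqref{eqn:expdot}:
$$
(e^{-m\bX}\cdot\delta)(y) \;=\; e^{-m\bX}\,\delta\!\bigl(e^{m\bX}\, y\, e^{-m\bX}\bigr)\,e^{m\bX},
$$
where the exponentials on the outside are now interpreted as multiplication in $\hat U(\p_N)$ rather than as the adjoint action.

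Next I will apply the Leibniz rule to $\delta(e^{m\bX} y\, e^{-m\bX})$, producing three terms, and combine them with the outer factors $e^{\pm m\bX}$. The identity $\delta(e^{-m\bX})e^{m\bX} = -e^{-m\bX}\delta(e^{m\bX})$ --- a direct consequence of $\delta(1)=0$ applied to $e^{m\bX} e^{-m\bX}=1$ --- collapses the sum to
$$
u\,y + \delta(y) - y\,u, \qquad u := e^{-m\bX}\delta(e^{m\bX}).
$$
Hence $e^{-m\bX}\cdot\delta = \delta + \ad_u$ as derivations of $\p_N$.

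The final step is to compute $u$ in closed form. Identity \eqref{eqn:contder} with $f = m\bX$ does exactly this:
$$
u \;=\; e^{-m\bX}\delta(e^{m\bX}) \;=\; \frac{1 - e^{-m\ad_\bX}}{m\ad_\bX}\,\delta(m\bX) \;=\; \frac{1-e^{-m\bX}}{\bX}\cdot \delta(\bX),
$$
which yields the required formula. Incidentally, this closed form also confirms that $u$ actually lies in $\p_N$, so that $\ad_u$ really is a derivation of $\p_N$ rather than merely an inner derivation of $\hat U(\p_N)$.

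There is no real obstacle here; the argument is essentially a bookkeeping exercise and fits on a few lines. The one pitfall I will guard against is the notational conflation of the adjoint action ``$\cdot$'' of $e^{\pm m\bX}$ on $\Der \p_N$ with ordinary left/right multiplication by $e^{\pm m\bX}$ inside $\hat U(\p_N)$, since both meanings appear in the same line of the calculation.
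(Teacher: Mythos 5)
Your argument is correct, and it is evidently the intended one: the paper states Lemma~\ref{lem:lieexp} without proof, immediately after the identity \eqref{eqn:contder}, which your computation uses in exactly the expected way (conjugation in $\hat U(\p_N)$ via \eqref{eqn:expdot}, Leibniz, then \eqref{eqn:contder} with $f=m\bX$ to identify $u=e^{-m\bX}\delta(e^{m\bX})$ with $\frac{1-e^{-m\bX}}{\bX}\cdot\delta(\bX)$). Your closing remark that $u$ lies in $\p_N$, so that $\ad_u$ is genuinely an inner derivation of $\p_N$, is a worthwhile check that the paper leaves implicit.
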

\begin{cor}
\label{cor:ad}
For all $\v \in \p_N$ and $m \in \C$, 
$$
e^{-m\bX} \cdot (\ad \v) = \ad(e^{-m\bX} \cdot \v).
$$
\end{cor}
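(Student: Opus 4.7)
The plan is to deduce the corollary directly from the fact that $\ad : \p_N \to \Der \p_N$ is a Lie algebra homomorphism. The key observation is that, for every $\v \in \p_N$,
$$
[\ad \bX, \ad \v] = \ad [\bX,\v] \quad \text{in } \Der \p_N,
$$
which, in the conventions of \S\ref{sec:unipotent}, is exactly the identity $\ad_\bX(\ad \v) = \ad(\ad_\bX \v)$, where $\ad_\bX$ on the left is the bracket with $\ad \bX$ in $\Der \p_N$. A straightforward induction then gives $\ad_\bX^n (\ad \v) = \ad(\ad_\bX^n \v)$ for every $n \geq 0$.

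From here I would unpack the definition $\bigl(\sum a_n \bX^n\bigr) \cdot y = \sum a_n \ad_\bX^n(y)$ applied to the coefficients $a_n = (-m)^n/n!$ and, using continuity of $\ad$ in the lower central series topology, compute
\begin{equation*}
e^{-m\bX} \cdot (\ad \v) = \sum_{n \geq 0} \frac{(-m)^n}{n!} \ad_\bX^n(\ad \v) = \ad \left( \sum_{n \geq 0} \frac{(-m)^n}{n!} \ad_\bX^n (\v) \right) = \ad (e^{-m\bX} \cdot \v),
\end{equation*}
which is the desired identity. As an alternative derivation that actually invokes Lemma \ref{lem:lieexp}, one can apply the lemma to $\delta = \ad \v$. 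Using $\delta(\bX) = [\v,\bX] = -\ad_\bX \v$ and interpreting the element in $\p_N$ appearing on the right as a derivation via $\ad$, the lemma yields $e^{-m\bX} \cdot \ad \v = \ad\bigl(\v - \tfrac{1-e^{-m\bX}}{\bX} \cdot \ad_\bX \v\bigr)$; a short power-series manipulation identifies the argument of $\ad$ with $e^{-m\bX} \cdot \v$.

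There is no real obstacle here. The only subtlety is a notational one: remembering that when an element $u \in \p_N$ appears where a derivation is expected it must be read as $\ad u$, and that the adjoint action of $\tfrac{1-e^{-m\bX}}{\bX}$ on $\p_N$ is by the substitution $\bX^n \mapsto \ad_\bX^n$ rather than by multiplication in $T(V)^\wedge$. Once these conventions are fixed, the identity is essentially the statement that $\ad$ intertwines the two $\cdot$-actions because $\ad$ is a continuous Lie algebra homomorphism.
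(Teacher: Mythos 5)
Your proof is correct. The paper states this as an immediate corollary of Lemma \ref{lem:lieexp} with no written proof, and your second derivation (applying the lemma to $\delta = \ad\v$, using $\delta(\bX) = -\ad_\bX\v$, and collapsing $\v - \tfrac{1-e^{-m\bX}}{\bX}\cdot\ad_\bX\v$ to $e^{-m\bX}\cdot\v$) is exactly that intended route. Your primary argument — that $\ad$ is a continuous Lie algebra homomorphism, so $[\ad\bX,\ad\v]=\ad[\bX,\v]$ propagates by induction and termwise summation through the exponential series — is a clean, self-contained alternative that does not need the lemma at all; both are valid, and you correctly flag the one genuine subtlety, namely that $\tfrac{1-e^{-m\bX}}{\bX}$ acts on $\p_N$ via the substitution $\bX^n\mapsto\ad_\bX^n$ rather than by multiplication.
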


\subsection{Proof of invariance}

The following statements prove Proposition \ref{prop:invariance}. We proceed by first showing elliptic invariance with respect to $\Z^2$ and then modular invariance with respect to $\SL_2(\Z)$. The following properties of the Jacobi form $F$ are used extensively.
\begin{itemize}
    \item ({\em Symmetry property}) 
    $$
    F(x,z,\tau) = F(2\pi iz,x/(2\pi i),\tau) = -F(-x,-z,\tau)
    $$
    \item ({\em Elliptic property})
    $$
    F(x,z + m\tau + n,\tau) = e^{-mx}F(x,z,\tau)
    $$    
    for all $m,n\in\Z$.
    \item ({\em Modularity property})
    $$
    F\left(\frac{x}{c\tau + d},\frac{z}{c\tau + d},\gamma\tau\right) = (c\tau+d)\exp\left(\frac{czx}{c\tau + d}\right)F(x,z,\tau)
    $$
    for all $\gamma \in \SL_2(\Z)$.
\end{itemize}
Proof of these properties can be found in \cite[\S3]{zagier}.

\begin{lemma} For all $(m,n) \in \Z^2$, 
$$
(m,n)^\ast \Omega_N = e^{-m\bX}\cdot \Omega_N.
$$
\end{lemma}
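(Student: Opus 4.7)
The plan is to verify the identity $(m,n)^\ast \Omega_N = e^{-m\bX}\cdot\Omega_N$ directly from the defining formula, exploiting the elliptic property of the Jacobi form $F$. Since the factor of automorphy $\widetilde{M}_{(m,n)} = \exp(-m\bX)$ is constant on $\h\times\C$, the derivative term $(d\widetilde{M})\widetilde{M}^{-1}$ in the full invariance equation from Proposition \ref{prop:invariance} vanishes, and conjugation by $\widetilde{M}_{(m,n)}$ reduces to the action $e^{-m\bX}\cdot$ via \eqref{eqn:expdot}. I would split $\Omega_N = 2\pi i \bY\,\partial/\partial\bX\, d\tau + \psi + \nu_1 + \nu_2$ and compare the two sides separately in the $dz$- and $d\tau$-components, keeping in mind that the pullback sends $dz \mapsto dz + m\, d\tau$.

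For the $dz$-component, only $\nu_2$ contributes, and the desired equality amounts to $e^{-m\bX}\cdot u(z,\tau) = u(z+m\tau+n,\tau)$ where $u = 2\pi i\bY + \sum_\alpha h_\alpha(\bX,z|\tau)\cdot\bt_\alpha$. This is a clean consequence of two ingredients: the elliptic property of $F$, which gives
\[
h_\alpha(\bX,z+m\tau+n|\tau) = e^{-m\bX}\, h_\alpha(\bX,z|\tau) + \frac{2\pi i(e^{-m\bX}-1)}{\bX},
\]
and the identity $e^{-m\bX}\cdot\bY = \bY + \frac{e^{-m\bX}-1}{\bX}\cdot[\bX,\bY]$, which uses the Lie algebra relation $[\bX,\bY] = \sum_\alpha\bt_\alpha$. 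Corollary \ref{cor:ad} allows this comparison at the level of inner derivations.

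For the $d\tau$-component, $\psi$ is invariant since each $\delta_{m,\alpha}$ annihilates $\bX$, while Lemma \ref{lem:lieexp} applied to $D = 2\pi i\bY\,\partial/\partial\bX$ produces an inner-derivation correction $\ad\bigl(2\pi i\tfrac{1-e^{-m\bX}}{\bX}\cdot\bY\bigr)$ on the right-hand side. On the left-hand side, $(m,n)^\ast\nu_1$ picks up an extra $-m h_\alpha$ term from differentiating the elliptic-correction piece of $h_\alpha$ (giving the analogous formula for $g_\alpha = \partial h_\alpha/\partial x$), and $(m,n)^\ast\nu_2$ contributes a new $d\tau$-piece $m\cdot u(z+m\tau+n,\tau)\, d\tau$ because of $(m,n)^\ast dz = dz + m\, d\tau$. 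The $-m h_\alpha$ and $+m h_\alpha$ contributions cancel, and Corollary \ref{cor:ad} identifies the surviving $g_\alpha$-terms on both sides. What is left reduces, via $\sum_\alpha\bt_\alpha = [\bX,\bY]$, to the purely formal identity
\[
\tfrac{1-e^{-m\bX}}{\bX}\cdot\bY - m\bY = \tfrac{1-e^{-m\bX}-m\bX}{\bX^2}\cdot[\bX,\bY],
\]
which is verified by a direct term-by-term comparison of power series in $\bX$ (both sides equal $-\sum_{n\ge 2}\tfrac{(-m)^n}{n!}\ad_\bX^{n-1}(\bY)$).

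The main obstacle is precisely this bookkeeping: the elliptic-property correction $2\pi i(e^{-m\bX}-1)/\bX$ enters $\nu_2$ directly, re-enters $\nu_1$ after differentiation, and is mirrored on the conjugation side by the inner-derivation correction coming from Lemma \ref{lem:lieexp} applied to $D$. All three correction terms look disparate, and the work is in showing they collapse — after the $m h_\alpha$ cancellation — into the single elementary power-series identity above.
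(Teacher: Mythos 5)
Your proposal is correct and follows essentially the same route as the paper: direct verification via the elliptic property of $F$, the observation that $\widetilde{M}_{I,(m,n)}=\exp(-m\bX)$ is constant, Lemma \ref{lem:lieexp} and Corollary \ref{cor:ad}, with the correction terms matched through the relation $\sum_\alpha\bt_\alpha=[\bX,\bY]$ and an elementary power-series identity. The only difference is organizational (you sort by $dz$- versus $d\tau$-components rather than by the summands $\nu_1$, $\nu_2$), which does not change the substance.
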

\begin{proof}
The forms $\bY\partial/\partial\bX \, d\tau$ and $\psi$ are invariant with respect to $\Z^2$. It follows from Lemma \ref{lem:lieexp} that 
\begin{align}
\label{eqn:Hmn}
(m,n)^\ast \left(2\pi i\bY \frac{\partial}{\partial \bX} \, d\tau \right) &= 2\pi i\bY \frac{\partial}{\partial \bX} \, d\tau  \cr
&=  e^{-m\bX}\cdot \left(2\pi i\bY \frac{\partial}{\partial \bX} \right) \, d\tau - 2\pi i \frac{1 - e^{-m\bX}}{\bX}\bY
\end{align}
and
\begin{equation}
    \label{eqn:psimn}
    (m,n)^\ast \psi = \psi = e^{-m\bX} \psi.
\end{equation}
Meanwhile,
\begin{align*}
    (m,n)^\ast \left(g_\alpha(\bx, \right. & \left. z|\tau)\cdot \bt_\zeta \, d\tau \right) \cr
    &= g_\alpha(\bx,z+m\tau+n|\tau)\cdot \bt_\alpha \, d\tau \cr
    &= \frac{\partial}{\partial \bX} \left(e^{-x_\alpha\bX}F(\bX, z + m\tau +n - \tilde{\alpha},\tau) - \frac{2\pi i}{\bX} \right) \cdot \bt_\alpha \, d\tau \cr
    &= \frac{\partial}{\partial \bX} \left(e^{-(x_\alpha+m) \bX} F(\bX, z -\tilde{\alpha},\tau) - \frac{2\pi i}{\bX} \right) \cdot \bt_\alpha \, d\tau \cr
    &= \left(-(x_\alpha+m) e^{-(x_\alpha+m)\bX} F(\bX, z - \tilde{\alpha},\tau) \right. \cr 
    &\qquad \qquad \qquad \qquad \left. + e^{-(x_\alpha+m) \bX} \frac{\partial F}{\partial \bX}(\bX,z -\tilde{\alpha},\tau)  + \frac{2\pi i}{\bX^2}\right)\cdot \bt_\alpha \, d\tau \cr
    &= \left(e^{-m\bX}\frac{\partial g_\alpha}{\partial\bX}(\bX,z|\tau) - me^{-(x_\alpha+m)\bX}F(\bX,z-\tilde{\alpha},\tau) + \frac{2\pi i}{\bX^2}\right)\cdot \bt_\alpha \, d\tau
\end{align*}
and
\begin{align*}
    (m,n)^\ast \left(h_\alpha(\bX,z|\tau) \right. & \left. \cdot \bt_\alpha\, dz\right) \cr 
    &= \left(e^{-x_\alpha \bX} F(\bX, z + m\tau + n - \tilde{\alpha}, \tau) - \frac{2\pi i}{\bX}\right)\cdot \bt_\alpha \, d(z + m\tau + n) \cr 
    &= \left(e^{-(x_\alpha+ m)\bX} F(\bX, z -\tilde{\alpha},\tau)- \frac{2\pi i}{\bX}\right) \cdot \bt_\alpha \, (dz + m\,d\tau)
\end{align*}
Then
\begin{align}
\label{eqn:nunm}
    (m,n)^\ast (\nu_1 + \nu_2) &= 2\pi i\bY + \sum_{\alpha \in (N^{-1}\Z/\Z)^2} \left(e^{-m\bX}\frac{\partial g_\alpha}{\partial\bX}(\bX,z|\tau) + \frac{2\pi i}{\bX^2}\right)\cdot \bt_\alpha \, d\tau \cr 
    &\qquad \qquad \qquad \qquad + \sum_{\alpha \in (N^{-1}\Z/\Z)^2} \left(e^{-m\bX}h_\alpha(\bX,z|\tau) - \frac{2\pi i}{\bX}\right) \cdot \bt_\alpha \, dz \cr
    &= e^{-m\bX}\cdot \nu_1 + 2\pi i\frac{1 - e^{-m\bX}}{\bX}\bY + e^{-m\bX}\cdot \nu_2.
\end{align}
Summing \eqref{eqn:Hmn}, \eqref{eqn:psimn}, and \eqref{eqn:nunm} proves the lemma.
\end{proof}
\begin{cor}
For $(m,n) \in \Z^2$ and $I \in SL_2(\Z)$ the identity matrix,
$$
(m,n)^\ast\Omega_N = \mathrm{Ad}(\widetilde{M}_{I,(m,n)})\Omega_N - d\widetilde{M}_{I,(m,n)}\widetilde{M}_{I,(m,n)}^{-1}.
$$
\end{cor}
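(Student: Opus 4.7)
The plan is to read the corollary as a direct repackaging of the preceding lemma, using the definition \eqref{eqn:automorphy} of the factor of automorphy and the interpretation of the dot action on derivations in \eqref{eqn:expdot}.

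First I would observe that $\widetilde{M}_{I,(m,n)}(\tau,z) = \exp(-m\bX)$ depends on neither $\tau$ nor $z$; here $\bX$ is a fixed generator of $\p_N$ acting on $\p_N$ by $\ad_\bX$. Consequently $d\widetilde{M}_{I,(m,n)} = 0$, so the Maurer--Cartan term $d\widetilde{M}_{I,(m,n)}\widetilde{M}_{I,(m,n)}^{-1}$ vanishes identically, and the right-hand side of the claim collapses to $\mathrm{Ad}(\widetilde{M}_{I,(m,n)})\Omega_N$.

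Next I would identify the dot notation with conjugation. The form $\Omega_N$ takes values in $\Der \p_N$, and under the embedding $\ad : \p_N \hookrightarrow \Der \p_N$, the symbol $e^{-m\bX}\cdot \delta$ means $\exp(-m\, \ad_{\ad_\bX})(\delta)$. By \eqref{eqn:expdot} applied in $\End(\p_N)$, this equals
$$
e^{-m\,\ad_\bX}\circ \delta \circ e^{m\,\ad_\bX} = \widetilde{M}_{I,(m,n)} \circ \delta \circ \widetilde{M}_{I,(m,n)}^{-1} = \mathrm{Ad}(\widetilde{M}_{I,(m,n)})(\delta).
$$
(One should check that the resulting element is indeed a derivation, but this is automatic since conjugation by an automorphism sends derivations to derivations, and $\exp(-m\,\ad_\bX)$ is an automorphism of $\p_N$.) Extending coefficient-wise, the equality $e^{-m\bX}\cdot \Omega_N = \mathrm{Ad}(\widetilde{M}_{I,(m,n)})\Omega_N$ holds as 1-forms with values in $\Der \p_N$.

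Combining these two observations with the preceding lemma, which computes $(m,n)^\ast \Omega_N = e^{-m\bX}\cdot \Omega_N$, yields
$$
(m,n)^\ast \Omega_N \;=\; \mathrm{Ad}(\widetilde{M}_{I,(m,n)})\Omega_N \;=\; \mathrm{Ad}(\widetilde{M}_{I,(m,n)})\Omega_N - d\widetilde{M}_{I,(m,n)}\widetilde{M}_{I,(m,n)}^{-1},
$$
as required. There is no real obstacle here; the only subtlety is bookkeeping the difference between the dot action on $\p_N$ and its induced conjugation action on $\Der \p_N$, which is handled by \eqref{eqn:expdot}.
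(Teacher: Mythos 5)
Your proposal is correct and follows the same route as the paper: identify $\widetilde{M}_{I,(m,n)} = \exp(-m\bX)$, use \eqref{eqn:expdot} to equate the dot action $e^{-m\bX}\cdot\Omega_N$ with $\mathrm{Ad}(\widetilde{M}_{I,(m,n)})\Omega_N$, invoke the preceding lemma, and note that the constancy of the factor of automorphy kills the $d\widetilde{M}$ term. Your remark that conjugation by an automorphism preserves derivations is a small extra care the paper leaves implicit, but otherwise the arguments coincide.
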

\begin{proof}
For all $(m,n)\in\Z^2$, the factor of automorphy is $\widetilde{M}_{I,(m,n)} = \exp(-m\bX)$. Thus, 
\begin{align*}
\mathrm{Ad}(\widetilde{M}_{I,(m,n)})\Omega_N &= \exp(-m\bX) \circ \Omega_N \circ \exp(m\bX) \cr
&= \exp(-m\bX) \cdot \Omega_N \cr
&= (m,n)^\ast \Omega_N. 
\end{align*}
The factor of automorphy does not depend on $z$ or $\tau$ and thus $d\widetilde{M}_{I,(m,n)} = 0$.
\end{proof}
We next show $\nabla_{\KZB_N}$ is invariant with respect to $\SL_2(\Z)$. We shall abbreviate $\widetilde{M}_{\gamma,(0,0)}$ to simply $\widetilde{M}_\gamma$. 
\begin{lemma}
\label{lem:gpsi}
For all $\gamma \in \SL_2(\Z)$, 
\begin{equation*}
\label{eqn:gpsi}
\gamma^\ast \psi = \Ad(\widetilde{M}_\gamma)\psi.
\end{equation*}
\end{lemma}
\begin{proof}
By \eqref{eqn:Amodular}, we have 
\begin{align*}
\gamma^\ast\left(A_{m,\alpha}(\tau)\delta_{m,\alpha} \, d\tau\right) &= A_{m,\alpha}(\gamma\tau)\delta_{m,\alpha} \, d(\gamma\tau) \cr 
&= (c\tau + d)^{m+2}A_{m,\alpha\gamma}(\tau)\delta_{m,\alpha} \, \frac{d\tau}{(c\tau+d)^2} \cr
&= (c\tau + d)^m A_{m,\alpha\gamma}(\tau)\delta_{m,\alpha} \, d\tau.
\end{align*}
Meanwhile, we know $e^{cz\bX/(c\tau+d)} \cdot \delta_{m,\alpha}$ since $\delta_{m,\alpha}$ annihilates $\bX$. Thus, we have 
\begin{align*}
    \Ad(\widetilde{M}_\gamma(\tau))\delta_{m,\alpha} &= \Ad(M_\gamma(\tau)) \delta_{m,\alpha} \cr
    &= (c\tau + d)^m \delta_{m,\alpha\gamma^{-1}}.
\end{align*}
Summing over $\alpha \in (N^{-1}\Z/\Z)^2$ yields the result.
\end{proof}
\begin{lemma}
For $T = \mat{1 & 1 \cr 0 & 1}$ and $S = \mat{0 & -1 \cr 1 & 0}$,
\begin{align*}
    T^\ast \nu_1 &= \sum_{\alpha} g_{\alpha T}(\bX,z|\tau) \cdot \bt_\alpha \, d\tau \cr 
    S^\ast \nu_1 &= \sum_{\alpha} \left(z e^{z\bX} h_{\alpha S}(\tau\bX,z|\tau) \cdot \bt_\alpha \right. + \left. \tau e^{z\bX} g_{\alpha S}(\tau \bX, z | \tau) \cdot \bt_\alpha \right) \, \frac{d\tau}{\tau} \cr 
    & \qquad + 2\pi i \frac{z\bX e^{z\bX} - e^{z\bX} + 1}{\bX} \cdot \bY \, \frac{d\tau}{\tau^2}
\end{align*}
 
\end{lemma}
\begin{proof}
Follows directly from \eqref{eqn:ginv}.
\end{proof}
\begin{lemma}
For all $\gamma \in \SL_2(\Z)$,
$$
\Ad(\widetilde{M}_\gamma(\tau))(\nu_1) = \sum_{\alpha} e^{cz\bX}g_{\alpha}((c\tau+d)\bX,z|\tau) \cdot \bt_{\alpha\gamma^{-1}} \, d\tau.
$$
\end{lemma}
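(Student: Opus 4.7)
The plan is to reduce the claim to computing the image of a single element of $\p_N$ under the Lie algebra automorphism $\widetilde{M}_\gamma(\tau)$. Since the 1-form $\nu_1$ is built from inner derivations, $\nu_1/d\tau = \sum_\alpha \ad(g_\alpha(\bX,z|\tau) \cdot \bt_\alpha)$, and since $\Ad$ of any automorphism intertwines with the adjoint representation in the sense that $\Ad(\phi)\circ \ad(v) \circ \Ad(\phi)^{-1} = \ad(\phi(v))$, the problem reduces to evaluating $\widetilde{M}_\gamma(\tau)\bigl(g_\alpha(\bX,z|\tau) \cdot \bt_\alpha\bigr)$ for each $\alpha$ and then summing.

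I would carry this out by factoring $\widetilde{M}_\gamma(\tau) = M_\gamma(\tau) \circ \exp(cz\bX/(c\tau+d))$ as in \eqref{eqn:automorphy} and handling each factor in turn. First, the inner automorphism by $\exp(cz\bX/(c\tau+d))$ acts on $\p_N$ as $\exp\bigl((cz/(c\tau+d))\ad_\bX\bigr)$. Since $g_\alpha(\bX,z|\tau) \cdot \bt_\alpha = \sum_m A_{m,\alpha}(z,\tau)\,\ad_\bX^m(\bt_\alpha)$ is obtained by applying a power series in $\ad_\bX$ to $\bt_\alpha$, and any two power series in the single operator $\ad_\bX$ commute, this exponential may be pulled through the factor of $g_\alpha(\bX,z|\tau)$ to act only on $\bt_\alpha$. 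The result is $g_\alpha(\bX,z|\tau)\cdot\bigl(e^{cz\bX/(c\tau+d)}\cdot\bt_\alpha\bigr)$.

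Next I would apply the Lie algebra automorphism $M_\gamma(\tau)$, which distributes through the adjoint action: it sends $\ad_\bX^m(\bt_\alpha)$ to $\ad_{M_\gamma\bX}^m(M_\gamma\bt_\alpha)$. By \eqref{eqn:M} we have $M_\gamma\bX = (c\tau+d)\bX$, and because $\gamma \in \G(N)$ implies $\alpha\gamma \equiv \alpha \pmod N$, the section $\bt_\alpha$ is fixed by $M_\gamma$. Consequently $g_\alpha(\bX,z|\tau)$ is transformed into $g_\alpha((c\tau+d)\bX,z|\tau)$ and the exponential into $e^{cz\bX}$. Since both are formal power series in $\bX$ alone, they commute as enveloping-algebra elements acting on $\bt_\alpha$, and the stated formula for $\Ad(\widetilde{M}_\gamma(\tau))(\nu_1)$ follows after reassembling the sum over $\alpha$.

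The computation is essentially formal. The main obstacle is notational bookkeeping: carefully distinguishing the $\cdot$ notation for the enveloping-algebra action from the Ad-action on $\Der \p_N$, and invoking Corollary \ref{cor:ad} to justify that inner automorphisms by exponentials of $\bX$ may be commuted through power series in $\bX$ when both act on the same $\bt_\alpha$. No genuinely new identity is needed — in particular, no modularity of $g_\alpha$ is invoked here (that enters only in the preceding lemma, which computes $\gamma^\ast \nu_1$ via \eqref{eqn:ginv}); the present lemma is purely about the behavior of the factor of automorphy.
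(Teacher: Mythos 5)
Your proposal is correct and follows the same route as the paper, which proves this lemma in one line by combining the factorization $\widetilde{M}_\gamma(\tau) = M_\gamma(\tau)\circ\exp(cz\bX/(c\tau+d))$ from \eqref{eqn:automorphy} with Corollary \ref{cor:ad}; you have simply written out the intermediate steps (commuting the inner automorphism past the power series in $\ad_\bX$, then rescaling $\bX$ by $c\tau+d$ under $M_\gamma$). Your closing observation that no modularity of $g_\alpha$ enters here — only the factor of automorphy — also matches the paper's division of labor between this lemma and the preceding one.
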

\begin{proof}
Follows from the formula for the factor of automorphy and Corollary \ref{cor:ad}.
\end{proof}
\begin{lemma}
For $T = \mat{1 & 1 \cr 0 & 1}$ and $S = \mat{0 & -1 \cr 1 & 0}$,
\begin{align*}
    T^\ast \nu_2 &= 2\pi i \bY + \sum_\alpha h_{\alpha T} (\bX, z|\tau) \cdot \bt_\alpha \, dz \cr 
    S^\ast \nu_2 &= 2\pi i e^{z\bX} \cdot \bY + \tau \sum_\alpha h_{\alpha S} (\tau \bX | \tau) \cdot \bt_\alpha \left(\frac{dz}{\tau} - \frac{z \, d\tau}{\tau^2}\right).
\end{align*}
\end{lemma}
\begin{proof}
Follows directly from \eqref{eqn:hinv}.
\end{proof}
\begin{lemma}
For all $\gamma \in \SL_2(\Z)$,
$$
\Ad(\widetilde{M}_\gamma(\tau))(\nu_2) = e^{cz\bX} \cdot \left(\frac{2\pi i\bY}{c\tau + d} + c \bX\right) \, dz + \sum_\alpha e^{cz\bX} h_\alpha((c\tau+d)\bX,z|\tau)\cdot \bt_{\alpha\gamma^{-1}} \, dz.
$$
\end{lemma}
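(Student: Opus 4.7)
The plan is to exploit the factorization $\widetilde{M}_\gamma(\tau) = M_\gamma(\tau)\circ \exp\!\bigl(\tfrac{cz}{c\tau+d}\bX\bigr)$ from \eqref{eqn:automorphy} and apply the two factors in succession. The inner factor is the exponential of an element of $\p_N$, so it induces an inner automorphism that acts on $\p_N$ by the adjoint action $e^{(cz/(c\tau+d))\bX}\cdot$. The outer factor $M_\gamma(\tau)$ is given explicitly by \eqref{eqn:M}; crucially, because $\gamma\in \G(N)$ acts trivially on $\E_N[N]$, it fixes each generator $\bt_\alpha$.

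First I would compute the inner conjugation. On the $\bY$-summand of $\nu_2/dz$ it yields $2\pi i\, e^{(cz/(c\tau+d))\bX}\cdot \bY$. On each summand $h_\alpha(\bX,z|\tau)\cdot \bt_\alpha$ I would use the fact that $\ad_\bX$ commutes with itself, so conjugation by $\exp(\phi\bX)$ commutes with every power series in $\ad_\bX$ for any scalar $\phi$. Consequently
\begin{equation*}
e^{(cz/(c\tau+d))\bX}\cdot\bigl(h_\alpha(\bX,z|\tau)\cdot \bt_\alpha\bigr)=\bigl(e^{(cz/(c\tau+d))\bX}h_\alpha(\bX,z|\tau)\bigr)\cdot \bt_\alpha,
\end{equation*}
with the multiplication interpreted in the associative completion $\C\ll\bX,\bY,\bt_\alpha\rr$.

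Next I would apply $M_\gamma(\tau)$. The rescaling $\bX\mapsto (c\tau+d)\bX$ turns $e^{(cz/(c\tau+d))\bX}$ into $e^{cz\bX}$ and $h_\alpha(\bX,z|\tau)$ into $h_\alpha((c\tau+d)\bX,z|\tau)$, while each $\bt_\alpha$ is preserved; this yields the $\bt_\alpha$-sum on the right-hand side. For the $\bY$-term, $M_\gamma(\tau)$ sends $\bY$ to $(c\tau+d)^{-1}\bY+(c/2\pi i)\bX$, so after multiplying by $2\pi i$ one obtains the bracketed expression $2\pi i\bY/(c\tau+d)+c\bX$. Because $\ad_\bX(\bX)=0$, the summand $c\bX$ is fixed by $e^{cz\bX}\cdot$, so the whole $\bY$-term collects as $e^{cz\bX}\cdot\bigl(2\pi i\bY/(c\tau+d)+c\bX\bigr)$, matching the statement. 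The main subtlety is just bookkeeping: the $1/(c\tau+d)$ in the inner exponent is cancelled exactly by the $\bX$-rescaling from $M_\gamma$, which is what forces $e^{cz\bX}$ (rather than $e^{(cz/(c\tau+d))\bX}$) to appear in the final answer. There is no deeper obstacle beyond the $\G(N)$-invariance of $\E_N[N]$, which is what allows the computation to go through at full level $N$ without any shift in the torsion index.
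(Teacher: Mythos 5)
Your proof is correct and takes essentially the same route as the paper's (one-line) proof: factor $\widetilde{M}_\gamma(\tau)$ as $M_\gamma(\tau)\circ\exp(cz\bX/(c\tau+d))$, absorb the inner exponential into the $\ad_\bX$-power series acting on each $\bt_\alpha$ (Corollary \ref{cor:ad}), and then apply the rescaling $M_\gamma(\tau)$. The bookkeeping you highlight --- the $1/(c\tau+d)$ in the exponent cancelling against $\bX\mapsto(c\tau+d)\bX$, and $M_\gamma(\tau)$ fixing each $\bt_\alpha$ --- is exactly what makes the stated formula come out.
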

\begin{proof}
Follows from the formula for the factor of automorphy and Lemma \ref{cor:ad}.
\end{proof}
\begin{cor}
\label{cor:adnu}
For $T = \mat{1 & 1 \cr 0 & 1}$ and $S = \mat{0 & -1 \cr 1 & 0}$,
\begin{equation*}
\label{eqn:adnu}
    \begin{aligned}
        (T^\ast - \Ad(\widetilde{M}_T(\tau)))(\nu_1 + \nu_2) &= 0 \cr 
        (S^\ast - \Ad(\widetilde{M}_S(\tau)))(\nu_1 + \nu_2) &= -\bX \, dz - 2\pi i \frac{e^{z\bX} - 1}{\bX} \frac{d\tau}{\tau^2}.
    \end{aligned}
\end{equation*}
\begin{proof}
Sum the results of the four previous lemmas.
\end{proof}
\end{cor}
\begin{lemma}
\label{lem:adHinv}
For all $\gamma \in \SL_2(\Z)$,
\begin{equation*}
\label{eqn:adHinv}
\begin{aligned}
    \Ad(\widetilde{M}_\gamma(\tau))&\left(2\pi i\bY \frac{\partial}{\partial \bX} \, d\tau \right) \cr 
    &= \Ad(M_\gamma(\tau))\left(2\pi i\bY \frac{\partial}{\partial \bX} \, d\tau + 2\pi i\frac{1 - e^{cz\bX/(c\tau + d)}}{\bX}\cdot \bY \, d\tau \right) \cr
    &= \Ad(M_\gamma(\tau))\left(2\pi i\bY \frac{\partial}{\partial \bX} \, d\tau\right) \cr 
    &\qquad \qquad \qquad + 2\pi i\frac{1 - e^{cz\bX}}{\bX} \cdot \left(\bY+ \frac{ c(c\tau + d)}{2\pi i}\bX\right) \, \frac{d\tau}{(c\tau + d)^2} \cr
    &= \Ad(M_\gamma(\tau))\left(\frac{1}{2\pi i}\bY \frac{\partial}{\partial \bX} \, d\tau\right) \cr 
    &\qquad \qquad \qquad + \left(2\pi i\frac{1 - e^{cz\bX}}{\bX} \cdot \bY - c^2z(c\tau + d)\bX\right) \, \frac{d\tau}{(c\tau + d)^2}
\end{aligned}
\end{equation*}
\end{lemma}
\begin{lemma}
\label{lem:dM}
For all $\gamma \in \SL_2(\Z)$,
\begin{equation*}
\label{eqn:dM}
\begin{aligned}
    d\widetilde{M}_\gamma(\tau)\widetilde{M}_\gamma(\tau)^{-1} &= d(M_\gamma(\tau)e^{cz\bX/(c\tau + d)})e^{- cz\bX/(c\tau + d)}M_\gamma(\tau)^{-1} \cr
    &= \left(dM_\gamma(\tau)e^{cz\bX/(c\tau + d)} + c\bX M_\gamma(\tau)e^{ cz\bX/(c\tau + d)}\, dz\right. & \cr
    & \qquad \qquad \left. - \frac{c^2z  M_\gamma(\tau)}{(c\tau+d)^2}\bX e^{cz\bX/(c\tau + d)}\, d\tau \right) e^{-cz\bX/(c\tau + d)}M_\gamma(\tau)^{-1} \cr
    &= dM_\gamma(\tau)M_\gamma(\tau)^{-1} + c\bX \, dz - c^2z \bX \, \frac{d\tau}{(c\tau+d)}
\end{aligned}
\end{equation*}
\end{lemma}
\begin{prop}
The $\KZB_N$ connection is invariant with respect to $\SL_2(\Z)$. 
\end{prop}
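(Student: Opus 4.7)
The plan is to assemble the pieces established in the preceding sequence of lemmas and show that everything cancels. Writing $\Omega_N = 2\pi i\bY\,\partial/\partial\bX\,d\tau + \psi + \nu_1 + \nu_2$, the invariance condition
$$
\gamma^\ast\Omega_N = \Ad(\widetilde{M}_\gamma)\Omega_N - d\widetilde{M}_\gamma\,\widetilde{M}_\gamma^{-1}
$$
splits into four additive contributions, one for each summand of $\Omega_N$ plus the $d\widetilde{M}_\gamma\widetilde{M}_\gamma^{-1}$ correction. First I note that $\psi$ contributes nothing to the obstruction: equation \eqref{eqn:gpsi} gives $\gamma^\ast\psi = \Ad(\widetilde{M}_\gamma)\psi$, and $\psi$ is independent of $z$. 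So the remaining task is to match the $\nu_1+\nu_2$ and $\bY\,\partial/\partial\bX\,d\tau$ pieces against $d\widetilde{M}_\gamma\widetilde{M}_\gamma^{-1}$.

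Next I would collect the three remaining terms written as explicit expressions: the discrepancy $(\gamma^\ast - \Ad(\widetilde{M}_\gamma))(\nu_1+\nu_2)$ from \eqref{eqn:adnu}, the computation $\Ad(\widetilde{M}_\gamma)(2\pi i\bY\,\partial/\partial\bX\,d\tau)$ from \eqref{eqn:adHinv}, and the Maurer--Cartan form $d\widetilde{M}_\gamma\widetilde{M}_\gamma^{-1}$ from \eqref{eqn:dM}. The $dz$ components are easy: the term $-c\bX\,dz$ in \eqref{eqn:adnu} cancels exactly against the $+c\bX\,dz$ in \eqref{eqn:dM}. The $d\tau$ components require a little more bookkeeping. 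The exponential expressions $-2\pi i(1-e^{cz\bX})/\bX\cdot\bY$ from \eqref{eqn:adHinv} and $-2\pi i(e^{cz\bX}-1)/\bX\cdot\bY$ from \eqref{eqn:adnu} sum to zero. Similarly the $c^2z(c\tau+d)\bX\cdot d\tau/(c\tau+d)^2$ term from \eqref{eqn:adHinv} cancels the $-c^2z\bX\,d\tau/(c\tau+d)$ term from \eqref{eqn:dM}.

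What remains after these cancellations is the identity
$$
\gamma^\ast(2\pi i\bY\,\partial/\partial\bX\,d\tau) - \Ad(M_\gamma(\tau))(2\pi i\bY\,\partial/\partial\bX\,d\tau) + dM_\gamma(\tau)\,M_\gamma(\tau)^{-1} = 0,
$$
purely in the $M_\gamma$-part of the automorphy factor. This is exactly the statement that the connection $\nabla = \aa\,\partial/\partial\bX\,d\tau = 2\pi i\bY\,\partial/\partial\bX\,d\tau$ on the pullback of $\cH$ to $\h$ is $\SL_2(\Z)$-equivariant with respect to the factor of automorphy $M_\gamma$ described in \S\ref{sec:H}, and it descends to the natural flat connection on $\cH$ over $\M_{1,1}$. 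I would verify this by direct calculation using the explicit formula \eqref{eqn:M}: apply $M_\gamma$ and $M_\gamma^{-1}$ to $\bY\,\partial/\partial\bX$, differentiate $M_\gamma(\tau)$ in $\tau$, and compare the result with $d\tau/(c\tau+d)^2 = \gamma^\ast d\tau$.

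The main obstacle is really the one already addressed by the preceding lemmas, namely the derivation of \eqref{eqn:adnu} from the modular transformation laws \eqref{eqn:hinv} and \eqref{eqn:ginv} for the Jacobi form $F$. Once those are in hand, the invariance boils down to the elementary book-keeping above together with the transparent identity for the $\cH$-connection.
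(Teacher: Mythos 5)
Your proposal is correct and follows essentially the same route as the paper: sum the contributions from \eqref{eqn:gpsi}, \eqref{eqn:adnu}, \eqref{eqn:adHinv}, and \eqref{eqn:dM}, observe that everything involving the exponential piece $\exp(cz\bX/(c\tau+d))$ of the automorphy factor cancels, and reduce to the residual identity $\gamma^\ast(2\pi i\bY\,\partial/\partial\bX\,d\tau) - \Ad(M_\gamma)(2\pi i\bY\,\partial/\partial\bX\,d\tau) + dM_\gamma M_\gamma^{-1} = 0$, which the paper discharges by citing Hain's Lemma~9.16. Your explicit tracking of the $dz$, exponential, and $c^2z\bX$ cancellations is correct, and your framing of the residual identity as the $\SL_2(\Z)$-equivariance of the Gauss--Manin connection $\aa\,\partial/\partial\bX\,d\tau$ on $\cH_\h$ is a nice conceptual gloss on what is otherwise the same calculation.
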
 
\begin{proof}
Sum the results of Corollary \ref{cor:adnu} and Lemmas \ref{lem:gpsi}, \ref{lem:adHinv}, and \ref{lem:dM} to observe for $\gamma = S$ and $T$,
\begin{multline*}
    \gamma^\ast\Omega_N - \Ad(\widetilde{M}_\gamma(\tau))\Omega_N + d\widetilde{M}_\gamma(\tau)\widetilde{M}_\gamma(\tau)^{-1} \cr = \gamma^\ast \left(2\pi i\bY \frac{\partial}{\partial \bX} \, d\tau \right)  - \Ad(M_\gamma(\tau))\left(2\pi i\bY \frac{\partial}{\partial \bX} \, d\tau \right) + dM_\gamma(\tau)M_\gamma(\tau)^{-1}.
\end{multline*}
The right hand side is zero by a simple calculation \cite[Lemma 9.16]{hain:kzb}.
\end{proof}
\begin{remark}
The above proof also holds for any congruence subgroup $\G$ of level $N$ if the included $N$-torsion $\alpha$ are closed under the action of $\G$. 
\end{remark}

\section{Index of notation}

\begin{longtable}{ll @{\extracolsep{\fill}} r}
$Y(N)$ & the moduli space of elliptic curves with level $N$ structure & p.~\pageref{not:YN}\cr
$\E_N$ & the universal elliptic curve over $Y(N)$ & p.~\pageref{not:EN} \cr
$\E_N[N]$ & the set of $N$-torsion sections of $\E_N$ & p.~\pageref{not:ENN} \cr
$E'$ & an elliptic curve $E$ minus its $N$-torsion points & p.~\pageref{not:Eprime} \cr
$\p_N$ & fiber of the KZB connection & p.~\pageref{not:pN} \cr
$\bX, \bY, \bt_\alpha$ & the generators of $\p_N$ & p.~\pageref{not:XY} \cr
$\h$ & the upper half plane & p.~\pageref{not:h} \cr
$\Gm$ & the multiplicative group of $\C$ & p.~\pageref{not:Gm} \cr
$\bmu_N$ & the set of $N$th roots of unity & p.~\pageref{not:bmu} \cr
$\H$ & the local system $R_1f_\ast \Q$ associated to $f : \E_\G \to Y_\G$ & p.~\pageref{not:H} \cr
$\cH$ & the holomorphic vector bundle $\H \otimes \cO_{\M_{1,1}}$ & p.~\pageref{not:cH} \cr
$\aa,\bb$ & the Betti $\Q$-basis of $\H$ & p.~\pageref{not:ab} \cr
$Y_\G$ & the modular curve $\G \bbs \h$ & p.~\pageref{not:YG} \cr
$\E_\G$ & the universal elliptic curve over $Y_\G$ & p.~\pageref{not:EG} \cr
$X_\G$ & the compactification of $Y_\G$ & p.~\pageref{not:XG} \cr
$\overline{\E}_\G$ & the compactification of $\E_\G$ & p.~\pageref{not:EGbar} \cr
$\E_\G[N]$ & the set of $N$-torsion sections of $\E_\G$ & p.~\pageref{not:EGN} \cr
$\E_\G'$ & $\E_\G$ minus single-valued $N$-torsion & p.~\pageref{not:EGminus} \cr
$E_0$ & the nodal cubic & p.~\pageref{not:nodal} \cr
$G_{m,\alpha}$ & $\G(N)$ Eisenstein series of weight $m$ & p.~\pageref{not:Gma} \cr
$G_{m,\zeta}$ & $\G_1(N)$ Eisenstein series of weight $m$ & p.~\pageref{not:Gmz} \cr 
$\bL(S)$ & the free Lie algebra on the set $S$ & p.~\pageref{not:free} \cr 
$\piun(X,x)$ & the unipotent completion of $\pi_1(X,x)$ & p.~\pageref{not:piun} \cr
$\p(X,x)$ & the Lie algebra of $\piun(X,x)$ & p.~\pageref{not:p} \cr
$\e_{m,\alpha}$ & a derivation of $\p_N$ indexed by an $N$-torsion section $\alpha$ & p.~\pageref{eqn:ep} \cr
\end{longtable}

\end{document}